\def\@adminfootnotes{%
  \let\@makefnmark\relax  \let\@thefnmark\relax
  \ifx\@empty\@date\else \@footnotetext{\@setdate}\fi
  \ifx\@empty\@subjclass\else \@footnotetext{\@setsubjclass}\fi
  \ifx\@empty\@keywords\else \@footnotetext{\@setkeywords}\fi
  \ifx\@empty\thankses\else \@footnotetext{%
    \def\par{\let\par\@par}\@setthanks}%
  \fi
}
\newtheorem{theorem}{Theorem}[section]
\newtheorem{prop}[theorem]{Proposition}
\newtheorem{corollary}[theorem]{Corollary}
\newtheorem{question}[theorem]{Question}
\theoremstyle{definition}
\newtheorem{example}[theorem]{Example}
\theoremstyle{remark}
\newtheorem{remark}[theorem]{Remark}
\numberwithin{equation}{section}
\let\oldtocsection=\tocsection
\let\oldtocsubsection=\tocsubsection
\renewcommand{\tocsection}[2]{\hspace{0em}\oldtocsection{#1}{#2}}
\renewcommand{\tocsubsection}[2]{\hspace{1em}\oldtocsubsection{#1}{#2}}
\title[Automata and one-dimensional TQFTs]{Automata and one-dimensional TQFTs with defects}
\author[P. Gustafson]{Paul Gustafson}
\address{Penn Engineering, GRASP Laboratory, University of Pennsylvania, PA 19104, USA}
\email{\href{mailto:pgu@seas.upenn.edu}{pgu@seas.upenn.edu}}
\author[M.S. Im]{Mee Seong Im}
\address{Department of Mathematics, United States Naval Academy, Annapolis, MD 21402, USA}
\email{\href{mailto:meeseongim@gmail.com}{meeseongim@gmail.com}}
\thanks{}
\author[R. Kaldawy]{Remy Kaldawy} 
\address{Department of Mathematics, Columbia University, New York, NY 10027, USA}
\email{\href{mailto:rmk2179@columbia.edu}{rmk2179@columbia.edu}}
\author[M. Khovanov]{Mikhail Khovanov} 
\address{Department of Mathematics, Columbia University, New York, NY 10027, USA}
\email{\href{mailto:khovanov@math.columbia.edu}{khovanov@math.columbia.edu}}
\thanks{}
\author[Z. Lihn]{Zachary Lihn}
\address{Department of Mathematics, Columbia University, New York, NY 10027, USA}
\email{\href{mailto:zal2111@columbia.edu}{zal2111@columbia.edu}}
\subjclass[2020]{Primary: 
57K16, 
68Q45, 
18M10, 
18M30; 
Secondary: 06A12, 
68Q70, 
18B20} 
\date{February 27, 2023}
\providecommand{\keywords}[1]{\textbf{\textit{Key words and phrases.}} #1}
\keywords{Regular language, automaton, topological theory, universal construction, TQFT, monoidal category, Boolean semiring, semilattice, lattice.}
\begin{document}

\def\AND{\mathsf{AND}}
\def\concatenate{\mathsf{concatenate}}
\def\gen{\mathsf{generators}}
\def\GL{\mathsf{GL}}
\def\init{\mathsf{in}}
\def\t{\mathsf{t}}
\def\out{\mathsf{out}}
\def\I{\mathsf I}
\def\R{\mathbb R}
\def\Q{\mathbb Q}
\def\Z{\mathbb Z}
\def\mc{\mathcal}  
\def\finite{\mathsf{finite}}
\def\infinite{\mathsf{infinite}}
\def\N{\mathbb N} 
\def\C{\mathbb C}
\def\S{\mathbb S}
\def\SS{\mathbb S} 
\def\CP{\mathbb P}
\def\Ob{\mathsf{Ob}}
\def\op{\mathsf{op}}
\def\new{\mathsf{new}}
\def\old{\mathsf{old}}
\def\OR{\mathsf{OR}}
\def\AND{\mathsf{AND}}
\def\rat{\mathsf{rat}}
\def\rec{\mathsf{rec}}
\def\Tau{\mathcal{T}}
\def\tail{\mathsf{tail}}
\def\coev{\mathsf{coev}}
\def\ev{\mathsf{ev}}
\def\id{\mathsf{id}}
\def\s{\mathsf{s}}
\def\t{\mathsf{t}}
\def\start{\textsf{starting}}
\def\Notation{\textsf{Notation}}
\def\circleft{\raisebox{-.18ex}{\scalebox{1}[2.25]{\rotatebox[origin=c]{180}{$\curvearrowright$}}}}
\renewcommand\SS{\ensuremath{\mathbb{S}}}
\newcommand{\kllS}{\kk\llangle  S \rrangle} 
\newcommand{\kllSS}[1]{\kk\llangle  #1 \rrangle}
\newcommand{\klS}{\kk\langle S\rangle}  
\newcommand{\aver}{\mathsf{av}}  
\newcommand{\ophana}{\overline{\phantom{a}}}
\newcommand{\Bool}{\mathbb{B}}
\newcommand{\dmod}{\mathsf{-mod}}
\newcommand{\lang}{\mathsf{lang}}
\newcommand{\pfmod}{\mathsf{-pfmod}}
\newcommand{\pdmod}{\mathsf{-pmod}}
\newcommand{\primitive}{\mathsf{irr}}
\newcommand{\Bmod}{\Bool\mathsf{-mod}}  
\newcommand{\Bmodo}[1]{\Bool_{#1}\mathsf{-mod}}  
\newcommand{\Bfmod}{\Bool\mathsf{-fmod}} 
\newcommand{\Bfpmod}{\Bool\mathsf{-fpmod}} 
\newcommand{\Bfsmod}{\Bool\mathsf{-}\underline{\mathsf{fmod}}}  
\newcommand{\undvar}{\underline{\varepsilon}} 
\newcommand{\RLang}{\mathsf{RLang}}
\newcommand{\undotimes}{\underline{\otimes}}
\newcommand{\sigmaacirc}{\Sigma^{\ast}_{\circ}} 
\newcommand{\cl}{\mathsf{cl}}
\newcommand{\PP}{\mathcal{P}} 
\newcommand{\wedgezero}{\{ \vee ,0\} } 
\newcommand{\whA}{\widehat{A}}
\newcommand{\whC}{\widehat{C}}
\newcommand{\whM}{\widehat{M}}
\newcommand{\mcH}{\mathcal{H}}
\newcommand{\mcU}{\mathcal{U}}
\newcommand{\mcUX}{\mathcal{U}(X)}
\newcommand{\mcFQ}{\mcF_{(Q)}}
\newcommand{\CobSI}{\Cob_{\Sigma,\I}}
\newcommand{\CobS}{\Cob_{\Sigma}}

\newcommand{\alphai}{\alpha_{\I}}  
\newcommand{\alphac}{\alpha_{\circ}}  
\newcommand{\alphap}{(\alphai,\alphac)} 
\newcommand{\alphaiQ}{\alpha_{\I,(Q)}} 
\newcommand{\alphacQ}{\alpha_{\circ,(Q)}}
\newcommand{\LcQ}{L_{\circ,(Q)}}  
\newcommand{\kkvect}{\kk\mathsf{-vect}}

\let\oldemptyset\emptyset
\let\emptyset\varnothing

\newcommand{\undempty}{\underline{\emptyset}}
\def\basis{\mathsf{basis}}
\def\irr{\mathsf{irr}} 
\def\spanning{\mathsf{spanning}}
\def\elmt{\mathsf{elmt}}

\def\l{\lbrace}
\def\r{\rbrace}
\def\o{\otimes}
\def\lra{\longrightarrow}
\def\Ext{\mathsf{Ext}}
\def\mf{\mathfrak} 
\def\mcC{\mathcal{C}}
\def\mcF{\mathcal{F}}
\def\mcO{\mathcal{O}}
\def\Fr{\mathsf{Fr}}

\def\ovb{\overline{b}}
\def\tr{{\sf tr}} 
\def\det{{\sf det }} 
\def\tral{\tr_{\alpha}}
\def\one{\mathbf{1}}   

\def\lra{\longrightarrow}
\def\twoheadlra{\longrightarrow\hspace{-4.6mm}\longrightarrow}
\def\hooklra{\raisebox{.2ex}{$\subset$}\!\!\!\raisebox{-0.21ex}{$\longrightarrow$}}
\def\kk{\mathbf{k}}  
\def\gdim{\mathsf{gdim}}  
\def\rk{\mathsf{rk}}
\def\undep{\underline{\epsilon}}
\def\mathM{\mathbf{M}}  

\def\CCC{\mathcal{C}} 
\def\wCCC{\widehat{\CCC}}  

\def\complement{\mathsf{comp}}
\def\Rec{\mathsf{Rec}} 

\def\Cob{\mathsf{Cob}} 
\def\Kar{\mathsf{Kar}}   

\def\dmod{\mathsf{-mod}}   
\def\pmod{\mathsf{-pmod}}    
\def\fmod{\mathsf{-fmod}}    

\newcommand{\brak}[1]{\ensuremath{\left\langle #1\right\rangle}}
\newcommand{\oplusop}[1]{{\mathop{\oplus}\limits_{#1}}}
\newcommand{\ang}[1]{\langle #1 \rangle } 
\newcommand{\ppartial}[1]{\frac{\partial}{\partial #1}} 
\newcommand{\checkr}{{\bf \color{red} CHECK IT}}
\newcommand{\checkb}{{\bf \color{blue} CHECK IT}}
\newcommand{\checkk}[1]{{\bf \color{red} #1}}
\newcommand{\FCob}{\mathsf{FCob}}

\newcommand{\mcA}{{\mathcal A}} 
\newcommand{\cZ}{{\mathcal Z}}
\newcommand{\sq}{$\square$}
\newcommand{\bi}{\bar \imath}
\newcommand{\bj}{\bar \jmath}
\newcommand{\iin}{\mathsf{in}} 

\newcommand{\undn}{\underline{n}}
\newcommand{\undm}{\underline{m}}
\newcommand{\undzero}{\underline{0}}
\newcommand{\undone}{\underline{1}}
\newcommand{\undtwo}{\underline{2}}

\newcommand{\cob}{\mathsf{cob}} 
\newcommand{\comp}{\mathsf{comp}} 

\newcommand{\Aut}{\mathsf{Aut}}
\newcommand{\Hom}{\mathsf{Hom}}
\newcommand{\Idem}{\mathsf{Idem}}
\newcommand{\Ind}{\mbox{Ind}}
\newcommand{\Id}{\textsf{Id}}
\newcommand{\End}{\mathsf{End}}
\newcommand{\iHom}{\underline{\mathsf{Hom}}}
\newcommand{\Bools}{\Bool^{\mathfrak{s}}}
\newcommand{\mfs}{\mathfrak{s}}

\newcommand{\drawing}[1]{
\begin{center}{\psfig{figure=fig/#1}}\end{center}}

\def\endomCempt{\End_{\mcC}(\emptyset_{n-1})}

\def\MS#1{{\color{blue}[MS: #1]}}
\def\MK#1{{\color{red}[MK: #1]}}
\def\PG#1{{\color{magenta}[PG: #1]}}

\begin{abstract} This paper explains how any nondeterministic automaton for a regular language $L$ gives rise to a one-dimensional oriented Topological Quantum Field Theory (TQFT) with inner endpoints and zero-dimensional defects labelled by letters of the alphabet for $L$. The TQFT is defined over the Boolean semiring $\Bool$. Different automata for a fixed language $L$ produce TQFTs that differ by their values on decorated circles, while the values on decorated intervals are described by the language $L$.  The language $L$ and the TQFT associated to an automaton can be given a path integral interpretation.  

In this TQFT the state space of a one-point 0-manifold is a free module over $\Bool$ with the basis of states of the automaton. Replacing a free module by a finite projective $\Bool$-module $P$ allows to generalize automata and this type of TQFT to a structure where defects act on open subsets of a finite topological space. Intersection of open subsets induces a multiplication on $P$ allowing to extend the TQFT to a TQFT for one-dimensional foams (oriented graphs with defects modulo a suitable equivalence relation).   

A linear version of these constructions is also explained, with the Boolean semiring replaced by a commutative ring.  
\end{abstract}

\maketitle
\tableofcontents

%
%

\section{Introduction}
\label{section:intro}

The punchline of this paper is that  any nondeterministic finite state automaton $(Q)$ recognizing a regular language $L$ gives rise to a Boolean one-dimensional topological quantum field theory (TQFT) $\mcFQ$ with defects. The state space $\mcFQ(+)$ of a positively-oriented point in this TQFT is the free (semi)module $\Bool Q$ on the set of states $Q$ of the automaton $(Q)$ over the Boolean semiring $\Bool=\{0,1|1+1=1\}$.  
Vice versa, a TQFT of this form, where state spaces are finite free Boolean modules, describes an automaton. 

TQFT $\mcFQ$ is a symmetric monoidal functor from the category $\CobSI$ of oriented one-dimensional cobordisms with $\Sigma$-labelled defects and inner endpoints taking values in the category $\Bool\fmod$ of free $\Bool$-modules. 
In category $\CobSI$, closed cobordisms are disjoint unions of intervals and circles with defects. A defect is a point (a zero-dimensional submanifold) of a one-manifold with a label from $\Sigma$ on it. 

An interval with defects defines a word $\omega$, given by reading the defects along the orientation of the interval. 
In the TQFT $\mcFQ$, an interval evaluates to $1$ if and only if $\omega$ is in the language $L$, otherwise it evaluates to $0$. A circle with defects defines a circular word $\omega'$ which evaluates to $1$ if and only if there is a cycle that reads $\omega'$ in the automaton  $(Q)$. Numbers $0$ and $1$ are viewed as elements of the Boolean semiring $\Bool$. 

In the TQFT $\mcFQ$ a defect labelled $a$ on an interval with two boundary points induces an endomorphism of $\Bool Q$ encoded by the transition function of the automaton, while sets of initial and accepting states determine the maps of state spaces near the inner (not boundary) points of the automaton. 

Thus, NFA or nondeterministic finite automaton $(Q)$ for a regular language $L$ gives rise to a one-dimensional Boolean TQFT with defects, where the language $L$ is encoded by evaluations of decorated intervals. This is explained in Section~\ref{subset_oned_from}, with the correspondence summarized in Proposition~\ref{prop_bijection}. 

One can fix $L$ and consider various automata $(Q)$ for $L$. The corresponding TQFTs $\mcFQ$ have the same evaluation on intervals but usually differ in their evaluation of circles with defects. Each such TQFT defines a \emph{circular language}, a $\Bool$-valued function on the set $\Sigma^{\ast}$  of words modulo the rotation equivalence relation. In Section~\ref{subsec_dependence} we show that, for a fixed $L$, many circular languages are possible and suggest the open problem to determine all possible circular languages for automata that describe a fixed regular language $L$. 

\vspace{0.07in} 

To determine whether a word $\omega$ is accepted by an automaton $(Q)$ one can sum (in the Boolean semiring) over all maps from the graph $I(\omega)$ which is a chain with edges labelled by letters in $\omega$ to $(Q)$ taking vertices to vertices and edges to edges. The map evaluates to $1\in \Bool$ if the letters of all edges match and the initial and terminal vertices of $I(\omega)$ go to vertices in the corresponding subsets of states of $(Q)$. This expression is reminiscent of the path integral (sum over all maps) and is explained in Section~\ref{subsec_path}. 

\vspace{0.07in} 

Earlier, in Section~\ref{sec_linear}, we discuss the linear version of this construction and classify TQFT functors from $\CobSI$ to the category $\R\dmod$ of modules over a commutative ring $R$. Isomorphism classes of these functors are in a bijection with finitely-generated projective $R$-modules $P$ with a choice of a vector, a covector, and endomorphisms of $P$, one for each element of $\Sigma$. 
Section~\ref{sec_linear} can be skipped to go directly to Section~\ref{sec_aut_TQFT} that relates automata and 1D TQFT over the Boolean semiring.

Section~\ref{sec_linear} is closely related to~\cite{Kh3,IK-22-linear,KS3}  that consider field-valued \emph{universal construction} for the category $\CobSI$. 
Universal construction gives rise to \emph{topological theories} for the category $\CobSI$, where one starts with an evaluation of closed morphisms (endomorphisms of the unit object $\one$ of the category of one-cobordisms, which is the empty $0$-manifold) and builds state spaces for $0$-manifolds from the evaluation. Topological theories are weaker than TQFTs and can be called \emph{lax TQFTs}. In a TQFT $\mcF$ the state space of the disjoint union is isomorphic to the direct product of state spaces for the individual components: 
\[\mcF(N_0\sqcup N_1)\cong \mcF(N_0)\otimes \mcF(N_1).
\] 
In a topological theory, there are only maps 
\[\mcF(N_0)\otimes \mcF(N_1)\lra \mcF(N_0\sqcup N_1),
\] 
injective for a theory defined over a field, which are rarely isomorphisms. Relation between the present paper and~\cite{IK-top-automata} is explained in Section~\ref{subsec_relations}. 

\vspace{0.07in} 

A general one-dimensional TQFT with defects, as above, assign a projective, 
not necessarily free, $\Bool$-module to a $+$ point, see  
Section~\ref{subsec_topological}. In Section~\ref{subsec_aut_top} we explain how the correspondence
\begin{center}
\emph{automata $\Longleftrightarrow$ TQFT}
\end{center} 
extends to this setup. One replaces the free module $\Bool Q$ on the set of states of an automaton $(Q)$ by a finite projective $\Bool$-module $P$. This module is isomorphic to the module $\mcUX$ of open sets of a finite topological space $X$, with addition given by the union of sets. Now letters $a\in \Sigma$ act on $\mcUX$ by taking open sets to open sets respecting the union operation, there is an initial open set and a functional which is analogous to the set $Q_{\t}$ of accepting states. Such a structure may be called \emph{quasi-automaton} or \emph{$\Tau$-automaton} (``T" for \emph{topological space}). 
When topological space $X$ is discrete, one recovers the familiar notion of nondeterministic automaton. 

A $\Bool$-semimodule of the form $\mcUX$ for a finite topological space $X$ is a distributive lattice, with the join operator $U\vee V := U \cup V$ and meet operation $U\wedge V := U \cap V$. Distributivity of the meet operation over the join turn $\Bool$-module $\mcUX$ into a commutative semiring with the multiplication $\wedge$. This allows to extend TQFT associated to $\mcUX$ to a graph TQFT where trivalent merge and split vertices are taken to the multiplication and its dual comultiplication on $\mcUX$, as explained in Section~\ref{subsec_foams}. These graphs satisfy associativity and coassociativity relations and the TQFT can be naturally viewed as a TQFT for one-foams. Two-dimensional foams appear throughout link homology~\cite{Kh1,MV,RW16,RW1,KR21}
, and here one seems to encounter their one-dimensional counterpart. It is then straighforward to add defects to one-foam TQFTs, combining the two extensions: to quasi-automata and to one-foams. 

Remark~\ref{remark_pi} in Section~\ref{subsec_path} explains how some of the constructions in the present paper can be generalized from a free category on one object and a set $\Sigma$ of generating morphisms to an arbitrary small category. 

\vspace{0.07in} 

There are many interesting open problems here, including finding higher-dimensional analogues of the correspondence 
\begin{center}
Finite State Automata $\Longleftrightarrow$ Boolean 1D TQFT with defects. 
\end{center} 
It might also be interesting to compare the present construction with G.~'t Hooft's cellular automata interpretation of quantum mechanics~\cite{Hooft16}, see also a brief discussion of defect diagrammatics for quantum mechanics in Section~\ref{subsec_floating}. 

\vspace{0.1in} 

{\bf Acknowledgments.} P.G. was supported by AFRL grant FA865015D1845 (subcontract 669737-1) and ONR grant N00014-16-1-2817, a Vannevar Bush Fellowship held by Dan Koditschek and sponsored by the Basic Research Office of the Assistant Secretary of Defense for Research and Engineering. M.K. gratefully acknowledges partial support from NSF grant DMS-2204033 and Simons Collaboration Award 994328.

%
%

\section{One-dimensional TQFTs with inner endpoints and defects over a commutative ring}\label{sec_linear}


\subsection{One-dimensional TQFT and finitely-generated projective modules}
\label{subsection:projectives}

Let $R$ be a commutative ring. The category $R\dmod$ of $R$-modules is symmetric monoidal with respect to the tensor product $M\otimes_R N$.  A \emph{one-dimensional TQFT} over $R$ is a symmetric monoidal functor
\begin{equation}
\mcF \ : \ \Cob \lra R\dmod
\end{equation}
from the category $\Cob$ of oriented one-dimensional cobordisms to $R\dmod$.

Category $\Cob$ has generating objects $+$ and $-$, which are a positively- and a negatively-oriented point, respectively. The cup and cap cobordisms, together with  permutation cobordisms, are generating morphisms, see Figure~\ref{figure-0.1}.

A one-dimensional TQFT $\mcF$, as above, takes $+$ and $-$ to $R$-modules $M:=\mcF(+)$ and $N:=\mcF(-)$, an arbitrary finite sign sequence to the corresponding tensor products of $M$'s and $N$'s, and suitably oriented cup and cap cobordisms to the maps $\cup$ and $\cap$ in \eqref{eq_cup_cap} below.
Thus, $\mcF$
amounts to choosing two $R$-modules $M$,$N$ together with the \emph{cup} and \emph{cap} maps
\begin{equation}\label{eq_cup_cap}
    \cup : R\lra M\otimes N, \ \ \ \cap: N\otimes M \lra R
\end{equation}
such that
\begin{equation}\label{eq_two_isot}
( \id_{M} \otimes \cap)\circ(\cup\otimes \id_{M}) = \id_{M}, \ \ \
   (\cap \otimes \id_{N} )\circ(\id_{N}\otimes \cup) = \id_{N},
\end{equation}
see Figure~\ref{figure-0.1} bottom left and right, respectively. The above relations are the isotopy relations on these maps.

\vspace{0.1in}

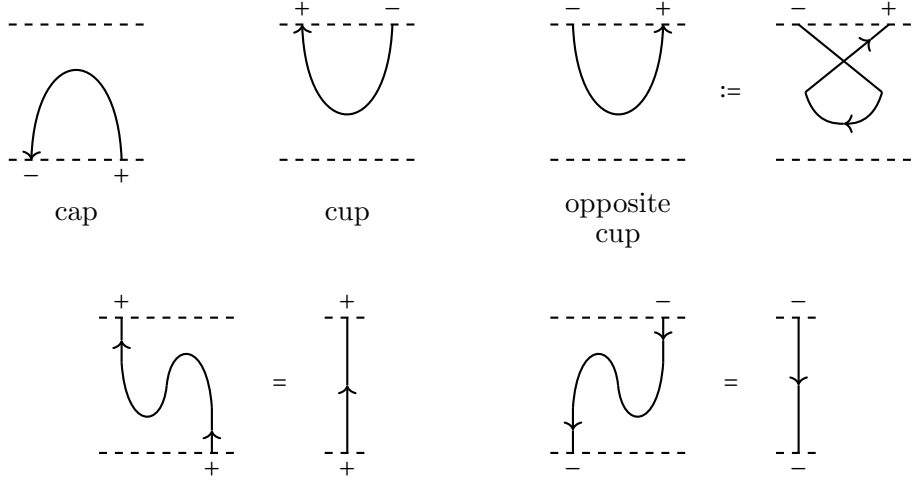
\begin{figure}
    \centering
    \begin{tikzpicture}[scale=0.6]
    \begin{scope}[shift={(0,0)}]
    \draw[thick,dashed] (0,3) -- (3,3);
    \draw[thick,dashed] (0,0) -- (3,0);
    
    \node at (0.5,-0.35) {$-$};
    \node at (2.5,-0.35) {$+$};
    
    \draw[thick,<-] (0.5,0) .. controls (0.6,2.65) and (2.4,2.65) .. (2.5,0);
    \node at (1.5,-1.25) {cap};
    \end{scope}

    \begin{scope}[shift={(6,0)}]
    \draw[thick,dashed] (0,3) -- (3,3);
    \draw[thick,dashed] (0,0) -- (3,0);
    
    \node at (0.5,3.35) {$+$};
    \node at (2.5,3.35) {$-$};
    \draw[thick,<-] (0.5,3) .. controls (0.6,0.35) and (2.4,0.35) .. (2.5,3);
    \node at (1.5,-1.25) {cup};
    \end{scope}

    \begin{scope}[shift={(12,0)}]
    \draw[thick,dashed] (0,3) -- (3,3);
    \draw[thick,dashed] (0,0) -- (3,0);
    
    \node at (0.5,3.35) {$-$};
    \node at (2.5,3.35) {$+$};
    \draw[thick,->] (0.5,3) .. controls (0.6,0.35) and (2.4,0.35) .. (2.5,3);
    \node at (1.5,-1) {opposite};
    \node at (1.5,-1.75) {cup};
    
    \node at (4,1.5) {$:=$};
    \end{scope}

    \begin{scope}[shift={(17,0)}]
    \draw[thick,dashed] (0,3) -- (3,3);
    \draw[thick,dashed] (0,0) -- (3,0);
    
    \node at (0.5,3.35) {$-$};
    \node at (2.5,3.35) {$+$};
    \draw[thick] (0.65,1.5) .. controls (0.85,0.8) and (1.3,0.8) .. (1.5,0.8);
    \draw[thick,<-] (1.5,0.8) .. controls (1.7,0.8) and (2.15,0.8) .. (2.35,1.5); 
    \draw[thick,->] (0.65,1.5) -- (2.1,2.67);
    \draw[thick] (2.1,2.67) -- (2.5,3);
    \draw[thick] (2.35,1.5) -- (0.5,3);
    \end{scope} 
    
    \begin{scope}[shift={(2,-6.5)}]
    \draw[thick,dashed] (0,3) -- (3,3);
    \draw[thick,dashed] (0,0) -- (3,0);
    
    \node at (0.5, 3.35) {$+$};
    \node at (2.5,-0.35) {$+$};
    
    \draw[thick] (0.5,3) -- (0.5,2.5);
    \draw[thick,<-] (0.5,2.5) -- (0.5,2.0);    
    \draw[thick,<-] (2.5,0.5) -- (2.5,0);
    \draw[thick] (2.5,1.0) -- (2.5,0.5); 
    
    \draw[thick] (0.5,2) .. controls (0.6,0.5) and (1.4,0.5) .. (1.5,1.5);
    \draw[thick] (1.5,1.5) .. controls (1.6,2.5) and (2.4,2.5) .. (2.5,1);    
    \node at (4,1.5) {$=$};
    \end{scope}
    
    \begin{scope}[shift={(7,-6.5)}]
    \draw[thick,dashed] (0,3) -- (1,3);
    \draw[thick,dashed] (0,0) -- (1,0);
    \node at (0.5, 3.35) {$+$};
    \node at (0.5,-0.35) {$+$};
    \draw[thick,<-] (0.5,1.5) -- (0.5,0);
    \draw[thick] (0.5,3) -- (0.5,1.5);
    \end{scope}
    
    \begin{scope}[shift={(12,-6.5)}]
    \draw[thick,dashed] (0,3) -- (3,3);
    \draw[thick,dashed] (0,0) -- (3,0);
    
    \node at (2.5, 3.35) {$-$};
    \node at (0.5,-0.35) {$-$};
    
    \draw[thick,->] (2.5,3) -- (2.5,2.5);
    \draw[thick] (2.5,2.5) -- (2.5,2.0);    
    \draw[thick] (0.5,0.5) -- (0.5,0);
    \draw[thick,->] (0.5,1.0) -- (0.5,0.5); 

    \draw[thick] (0.5,1) .. controls (0.6,2.5) and (1.4,2.5) .. (1.5,1.5);
    \draw[thick] (1.5,1.5) .. controls (1.6,0.5) and (2.4,0.5) .. (2.5,2);   
    \node at (4,1.5) {$=$};
    \end{scope}
    
    \begin{scope}[shift={(17,-6.5)}]
    \draw[thick,dashed] (0,3) -- (1,3);
    \draw[thick,dashed] (0,0) -- (1,0);
    \node at (0.5, 3.35) {$-$};
    \node at (0.5,-0.35) {$-$};
    \draw[thick] (0.5,1.5) -- (0.5,0);
    \draw[thick,->] (0.5,3) -- (0.5,1.5);
    \end{scope}

    \end{tikzpicture}
    \caption{Top left: the cap and cup cobordisms. The cap and cup cobordisms for the opposite orientation are obtained by composing these two with the permutation cobordisms, see top right for the cup cobordism for the opposite orientation. Bottom row shows isotopy relations on cup and cap cobordisms.}
    \label{figure-0.1}
\end{figure}

We can write
\[ \cup(1)=\sum_{i=1}^n m_i \otimes n_i, \hspace{0.5cm} 
m_i\in M, \hspace{0.5cm} 
n_i \in N.
\]
From the first relation in \eqref{eq_two_isot},
\begin{equation} 
\label{eq_m_decomp}
m = \sum_{i=1}^k  \cap(n_i\otimes m) \, m_i, \hspace{0.5cm}   m\in M,
\end{equation}
so that $M$ is generated by $m_1,\ldots, m_k$. Let $p:R^k\lra M$ be the surjective $R$-module map taking standard generators of $R^k$ to $m_1,\ldots, m_k$.
Each $n_i\in N$ defines an $R$-module map
\[
\iota_i: M\lra R, \hspace{0.5cm} \iota_i(m) = \cap(n_i\otimes m) ,
\]
and together they give a map
\[
\iota =(\iota_1,\ldots, \iota_k)^T \ : \ M \lra R^k.
\]
The composition $p\circ \iota =\id_{M}$, due to the
formula \eqref{eq_m_decomp}, where $T$ is the transpose of the $k$-tuple. Consequently, maps $p$ and $\iota$ realize $M$ as a direct summand of the free module $R^k$. Consequently, $M$ is a finitely-generated projective module, and so is $N$.

The dual module $M^{\ast}=\Hom_R(M,R)$ is also finitely-generated projective, being a direct summand of $(R^k)^{\ast}\cong R^k$.
There is a natural evaluation map
\begin{equation} \label{eq_eval}
\cap_M \ : \ M^{\ast}\otimes_R M\lra R,  \hspace{1cm} f\otimes m \mapsto f(m)\in R.
\end{equation}
It is now natural to compare $\cap_M$ to the map $\cap$ in \eqref{eq_cup_cap}.

There is an $R$-module map $\psi: N\lra M^{\ast}$ since each $n\in N$ gives the element $\psi(n)\in M^{\ast}$ satisfying $\psi(n)(m)=\cap(n\otimes m)$. Similarly, there is a map $\psi':M^{\ast}\lra N$ such that 
\[ 
\psi'(m^{\ast}) = \sum_{i=1}^k m^{\ast}(m_i) n_i.
\]
The composition $\psi'\psi$ takes $n\in N$ to
\[  
\sum_{i=1}^k \cap (n\otimes m_i) n_i = n,
\]
where the equality follows from the second relation in \eqref{eq_two_isot}. Consequently, $\psi'\psi=\id_N$. The other composition $\psi\psi'$ takes $m^{\ast}$ to
\[ 
\psi\psi'(m^{\ast}) = \sum_{i=1}^k m^{\ast}(m_i) \psi(n_i),
\]
which on $m'\in M$ evaluates to
\[ \sum_{i=1}^k m^{\ast}(m_i)\,  \cap(n_i\otimes m') = m^{\ast}(\sum_{i=1}^k   \cap(n_i\otimes m') m_i ) = m^{\ast}(m')
\]
due to \eqref{eq_m_decomp}. Thus, $\psi\psi'=\id_M$ and $\psi,\psi'$ are mutually-inverse isomorphisms between $N$ and $M^{\ast}$. Replacing $N$ by $M^{\ast}$ in \eqref{eq_cup_cap} via these isomorphisms produces cup and cap maps
\begin{equation}\label{eq_cup_cap_2}
    \cup_M : R\lra M\otimes M^{\ast}, \hspace{0.75cm}  \cap_M: M^{\ast} \otimes M \lra R,
\end{equation}
the \emph{coevaluation} and \emph{evaluation} maps, respectively. The map $\cap_M$ is given by \eqref{eq_eval}, while we can write the coevaluation map $\cup_M$ by fixing a realization of a finitely-generated projective module $M$ as a direct summand of free module $R^k$ via maps
\begin{equation} \label{eq_i_p}
M \stackrel{\iota}{\lra} R^k \stackrel{p}{\lra} M , \hspace{0.75cm} 
p\, \iota = \id_M ,
\end{equation}
and denoting the standard basis of $R^k$ by $(v_1,\ldots, v_k).$ The coevaluation map for $R^k$ is
\[  \mathsf{coev}_{R^k} \ : \ 1 \lra \sum_{i=1}^k v_k\otimes v^k,
\]
where $(v^1,\ldots, v^k)$ is the dual basis of $(R^k)^{\ast}\cong R^k$. The coevaluation map for $M$ is given by composing the above map with $p$ and $\iota^{\ast}$:
\begin{equation} \label{eq_coev} \mathsf{coev}_{M} \ : \ 1 \lra \sum_{i=1}^k p(v_k)\otimes \iota^{\ast}(v^k),  
\hspace{0.5cm} 
\iota^{\ast}: (R^k)^{\ast} \lra M^{\ast}.
\end{equation}
The coevaluation map $\cup_M$ given by \eqref{eq_coev}   does not depend on the choice of factorization \eqref{eq_i_p} since it is uniquely determined by the map $\cap_M$ in \eqref{eq_eval} and the relations
\begin{equation}\label{eq_two_isot_2}
( \id_{M} \otimes \cap_M)\circ(\cup_M\otimes \id_{M}) = \id_{M}, \ \ \
   (\cap_M \otimes \id_{M^{\ast}} )\circ(\id_{M^{\ast}}\otimes \cup_M) = \id_{M^{\ast}}.
\end{equation}
Thus, the coevaluation map  is computed  by writing $M$ as a direct summand of $R^k$ for some $k$ and identifying $M^{\ast}$ with the corresponding summand of $(R^k)^{\ast}\cong R^k$. Projection
\[ p\otimes\iota^{\ast}:R^k\otimes (R^k)^{\ast}\lra M\otimes M^{\ast}
\]
produces the coevaluation map for $M$ from that of  $R^k$.
We obtain the following well-known observation.

\begin{prop} \label{prop_classify} Let $R$ be a commutative ring. One-dimensional oriented TQFTs taking values in $R\dmod$ are classified by finitely-generated projective $R$-modules $P$. Such a TQFT associates $P$ to a positively-oriented point, $P^{\ast}$ to a negatively-oriented point, and evaluation and coevaluation maps $\cap_P, \cup_P$ in equations \eqref{eq_eval} and \eqref{eq_coev} with $M=P$ to the cup and cap cobordisms.
\end{prop}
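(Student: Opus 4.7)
The excerpt has essentially carried out the forward direction of the classification, so my plan is to organize what is already established and supply the small remaining pieces: the converse construction and the statement about isomorphism classes.

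First, I would package the forward direction. Given a TQFT $\mcF : \Cob \lra R\dmod$, set $P := \mcF(+)$. The calculation already in the excerpt shows that (i) $P$ is finitely generated projective, with explicit splitting $R^k \stackrel{p}{\lra} P \stackrel{\iota}{\lra} R^k$ coming from the decomposition $\cup(1) = \sum m_i \otimes n_i$; (ii) the mutually inverse maps $\psi, \psi'$ identify $N = \mcF(-)$ with $P^{\ast}$; and (iii) under this identification, the cup and cap cobordisms are sent to $\cup_P$ and $\cap_P$ of \eqref{eq_eval} and \eqref{eq_coev}. Because the category $\Cob$ is generated (as a symmetric monoidal category) by the $+, -$ objects and the cup, cap, and permutation morphisms subject to the isotopy relations \eqref{eq_two_isot} and the symmetric monoidal axioms, these three facts determine $\mcF$ up to a unique natural isomorphism from $P$.

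Second, I would supply the converse. Given any finitely generated projective $R$-module $P$, define a symmetric monoidal functor $\mcF_P$ by sending $+ \mapsto P$, $- \mapsto P^{\ast}$, using the tensor product in $R\dmod$ on sign sequences, the symmetry in $R\dmod$ for permutation cobordisms, and $\cup_P, \cap_P$ from \eqref{eq_eval}, \eqref{eq_coev} for the cup and cap. The required relations \eqref{eq_two_isot_2} for $(\cup_P, \cap_P)$ are a standard property of finitely generated projectives: pick a section $p \circ \iota = \id_P$ into $R^k$, where the zigzag relations hold tautologically on the basis $(v_i, v^i)$, and then push them down through $p \otimes \iota^{\ast}$, which is exactly the content of the explicit formula \eqref{eq_coev}. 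Since every relation in $\Cob$ is generated by the isotopy relations and the symmetric monoidal axioms, $\mcF_P$ extends uniquely to a symmetric monoidal functor on the full subcategory.

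Third, for the classification statement I would observe that the two constructions $\mcF \mapsto \mcF(+)$ and $P \mapsto \mcF_P$ are mutually inverse up to isomorphism: starting from $P$ the first construction returns $P$ on the nose, and starting from $\mcF$ the second returns a functor naturally isomorphic to $\mcF$ via $\id_P$ on $+$ and $\psi : \mcF(-) \stackrel{\cong}{\lra} P^{\ast}$ on $-$, extended monoidally. A monoidal natural isomorphism of TQFTs is then tracked by an $R$-module isomorphism of the associated projectives, so isomorphism classes of TQFTs correspond bijectively to isomorphism classes of finitely generated projective $R$-modules.

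The only real subtlety is the presentational step: one must know that $\Cob$ admits the expected presentation by cups, caps and braidings modulo isotopy and symmetric monoidal axioms, so that defining $\mcF_P$ on generators and checking \eqref{eq_two_isot_2} suffices. This is classical (essentially the fact that $\Cob$ is the free rigid symmetric monoidal category on one self-dual generator, refined by orientation to a pair of dual generators), and I would cite it rather than reprove it; once invoked, the rest of the argument is the bookkeeping above.
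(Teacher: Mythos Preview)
Your proposal is correct and follows essentially the same approach as the paper: the paper's proof is precisely the discussion preceding the proposition, which establishes the forward direction (that $\mcF(+)$ is finitely generated projective and $\mcF(-)\cong \mcF(+)^{\ast}$ with cup and cap becoming the standard (co)evaluation), and then records the result as a ``well-known observation'' without spelling out the converse. You add the converse construction $P\mapsto \mcF_P$, the mutual-inverse check, and the remark about the presentation of $\Cob$; these are exactly the pieces the paper leaves implicit, and your treatment of them is accurate.
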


A related observation is that the tensor product endofunctor $V\longmapsto M\otimes V$ in the category $R\dmod$ has a left adjoint if and only if $M$ is a finitely-generated projective $R$-module, see the answer by Q.~Yuan in ~\cite{Math-StackExchange-Brenin}. 

More generally, given a symmetric monoidal category $\mcC$, symmetric monoidal functors $\mcF: \Cob\lra \mcC$ correspond to pairs $M,N$ of objects of $\mcC$ such that 
\begin{itemize}
    \item the endofunctor of $\mcC$ of tensoring with $M$ is left adjoint to that of
    tensoring with $N$,
    \item a choice of the adjointness morphisms is made.
\end{itemize}
This is the one-dimensional case of the Baez--Dolan \emph{cobordism hypothesis}, see~\cite{BD95, Freed13}. Such a functor $\mcF$ selects a \emph{rigid} symmetric monoidal subcategory $\mcC_{\mcF}$ in $\mcC$, namely the full monoidal subcategory generated by objects $M$ and $N$ (so that objects of $\mcC_{\mcF}$ are arbitrary tensor products of $M$ and $N$). The category $\mcC$ itself may not be rigid, as the above example demonstrates ($R\dmod$, restricted to finitely-generated modules, is rigid if and only if $R$ is semisimple).

Note that the circle in this theory evaluates to the Hattori--Stallings (HS) rank 
\begin{equation}\label{eq_HS}
\mathsf{rk}(P) \ := \ \sum_{i=1}^k m_i^{\ast}(m_i)\in R
\end{equation} 
of $P$ where 
\begin{equation}
\cup_P(1) = \sum_{i=1}^k m_i^{\ast}\otimes m_i \in P\otimes P^{\ast}. 
\end{equation}  
When $P\cong R^k$ is a free module, its HS rank is $k\in R$. 

\begin{remark}
Replace $R\dmod$ by the category $\mathcal{C}^b(R\pdmod)$ of bounded complexes of projective $R$-modules up to chain homotopies. Then any bounded complex
\[
P=(\ldots \lra P_i \stackrel{d_i}{\lra} P_{i+1}\lra \ldots )
\]
such that all terms of $P$ are finitely-generated gives rise to a one-dimensional oriented TQFT with values in  $\mathcal{C}^b(R\pdmod)$, with $\mcF(+)=P$ and $\mcF(-)=P^{\ast}$, the termwise dual of $P$ with the induced differential. Complex $P^{\ast}$ has the module $P_i^{\ast}$ in degree $-i$ and the differential $d^{\ast}$ with $d_{-i}^{\ast}:P_i^{\ast}\lra P_{i-1}^{\ast}$ being the dual of $d_{i-1}$.
\end{remark}

{\it Generalizing to commutative semirings.}
The arguments leading to Proposition~\ref{prop_classify} did not use subtraction in the commutative ring $R$ and extend immediately to any commutative semiring $R$. The latter has addition and multiplication operations and elements $0,1$, satisfying the usual commutativity, associativity, distributivity axioms, but does not, in general, have subtraction. It is straightforward to define the category $R\dmod$ of $R$-modules (also called $R$-semimodules). We say that an $R$-module $P$ is \emph{finitely-generated projective} if it is a retract of a finite rank free module, so that maps $\iota,p$ below exist:  
\begin{equation} \label{eq_i_p_2}
P \stackrel{\iota}{\lra} R^k \stackrel{p}{\lra} P , \hspace{0.75cm} 
p\, \iota = \id_P . 
\end{equation}
Category $R\dmod$ is symmetric monoidal under the standard tensor product operation. The tensor product $\otimes$ of $R$-modules is especially inconvenient to work with when $R$ is not a ring,  and $M\otimes N$ is easier to understand when one of the modules is projective. 
Proposition~\ref{prop_classify} extends to TQFTs over commutative semirings: 

\begin{prop} \label{prop_classify2} Let $R$ be a commutative semiring. Isomorphism classes of one-dimensional oriented TQFTs taking values in $R\dmod$ are 
in a bijection with isomorphism classes of  finitely-generated projective $R$-modules $P$. Such a TQFT associates $P$ to a positively-oriented point, $P^{\ast}:=\Hom_R(P,R)$ to a negatively-oriented point, and evaluation and coevaluation maps $\cap_P, \cup_P$ in equations \eqref{eq_eval} and \eqref{eq_coev} with $M=P$ to the cup and cap cobordisms.
\end{prop}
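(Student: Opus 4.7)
The plan is to rerun the argument that established Proposition~\ref{prop_classify}, verifying at each step that neither additive inverses nor flatness properties special to rings were ever invoked. Given a TQFT $\mcF$ over the semiring $R$, set $M=\mcF(+)$ and $N=\mcF(-)$, and let $\cup, \cap$ denote the images of the cup and cap cobordisms. Writing $\cup(1)=\sum_{i=1}^k m_i \otimes n_i$ and applying the first zig-zag relation in \eqref{eq_two_isot} yields the reconstruction identity $m = \sum_{i=1}^k \cap(n_i\otimes m)\,m_i$ for every $m\in M$; this uses only multiplication and addition in $R$. Defining $p:R^k \lra M$ by sending the standard generators to the $m_i$, and $\iota:M\lra R^k$ by $\iota(m)=(\cap(n_i\otimes m))_{i=1}^k$, the identity above gives $p\circ\iota = \id_M$, exhibiting $M$ as a retract of $R^k$ in the sense of \eqref{eq_i_p_2}. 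Hence $M$ is finitely-generated projective, and by symmetry so is $N$.

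Next, I would construct the comparison with the dual by setting $\psi:N\lra M^{\ast}$, $\psi(n)(m)=\cap(n\otimes m)$, and $\psi':M^{\ast}\lra N$, $\psi'(m^{\ast}) = \sum_{i=1}^k m^{\ast}(m_i)\,n_i$. The same two computations as in the ring case, which only manipulate $\cap$, $\cup$, and the defining relation $p\circ\iota=\id_M$, give $\psi'\psi=\id_N$ and $\psi\psi'=\id_{M^{\ast}}$. Transporting $(\cup,\cap)$ along this isomorphism identifies them with the maps $\cup_M,\cap_M$ of \eqref{eq_coev} and \eqref{eq_eval}. This completes the map from isomorphism classes of TQFTs to isomorphism classes of finitely-generated projective $R$-modules.

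For the reverse direction, given any finitely-generated projective $P$ with a chosen retraction \eqref{eq_i_p_2}, I would define $\cap_P$ as the canonical evaluation and $\cup_P$ as the image of the standard coevaluation on $R^k$ under $p\otimes \iota^{\ast}$, as in \eqref{eq_coev}. The zig-zag relations \eqref{eq_two_isot_2} for $P$ follow from those for $R^k$ (which hold by direct calculation on the basis) together with $p\iota=\id_P$; no subtraction appears. Independence of the choice of retraction, and hence well-definedness on isomorphism classes, is a formal consequence of the zig-zag relations uniquely determining $\cup_P$ once $\cap_P$ is fixed. Finally, the two constructions are mutually inverse up to isomorphism because an arbitrary TQFT is recovered from the associated projective $P=M$ by applying the reverse construction to the retraction $(p,\iota)$ produced in the first paragraph.

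The most delicate point — where care over a semiring is needed rather than a ring — is the tensor product $\otimes_R$, which is poorly behaved in general over semirings. The obstacle is circumvented because every module entering the argument ($M$, $N$, $M^{\ast}$) is a retract of a finite-rank free module, so all tensor products, duals, and the maps between them factor through the free case where the usual identities and explicit bases are available. A secondary point to verify is that $\Hom_R(P,R)$ remains a well-behaved dual in this setting: its finite generation and projectivity follow from realizing $P^{\ast}$ as a retract of $(R^k)^{\ast}\cong R^k$ via the transposed maps $\iota^{\ast}, p^{\ast}$, again using only the semiring operations.
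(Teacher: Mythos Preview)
Your proposal is correct and takes essentially the same approach as the paper: the paper simply observes that the argument leading to Proposition~\ref{prop_classify} never uses subtraction and therefore extends verbatim to commutative semirings, while you spell out that same verification step by step. Your added remarks about tensor products over semirings being tame here because every module in sight is a retract of a free module are accurate and a helpful gloss on what the paper leaves implicit.
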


The notion of the Hattori-Stallings rank $\mathsf{rk}(P)$ extends to f.g. projective modules over a commutative semiring, via \eqref{eq_HS}.


\subsection{Floating endpoints, defects, and networks}
\label{subsec_floating} 

\quad 
\vspace{0.05in} 

{\it Half-intervals and floating endpoints.}
To slightly enrich the category $\Cob$ of one-cobordisms, one can allow one-manifolds to have some endpoints ``inside'' the cobordism rather than on the (outer) boundary. Such \emph{inside} points may also be called \emph{floating} endpoints of a cobordism. This leads to \emph{half-intervals}, cobordisms between the empty 0-manifold and a single point with the positive or the negative orientation, as shown in Figure~\ref{figure-1}. Half-intervals appear in~\cite{IK-top-automata,Kh3}, and, in a related context, in~\cite{KS1}. 

\vspace{0.1in} 

\begin{figure}
    \centering
\begin{tikzpicture}[scale=0.6]
\begin{scope}[shift={(0,0)}]
\draw[thick,dashed] (0,3) -- (1,3);
\draw[thick,dashed] (0,0) -- (1,0);
\node at (0.5,3.35) {$+$};
\draw[thick,<-] (0.5,3) -- (0.5,1.5); 
\end{scope}

\begin{scope}[shift={(3.5,0)}]
\draw[thick,dashed] (0,3) -- (1,3);
\draw[thick,dashed] (0,0) -- (1,0);
\node at (0.5,-0.35) {$+$};
\draw[thick,<-] (0.5,1.5) -- (0.5,0);
\end{scope}

\begin{scope}[shift={(7,0)}]
\draw[thick,dashed] (0,3) -- (1,3);
\draw[thick,dashed] (0,0) -- (1,0);
\node at (0.5,3.35) {$-$};
\draw[thick,->] (0.5,3) -- (0.5,1.1); 
\end{scope}

\begin{scope}[shift={(10.5,0)}]
\draw[thick,dashed] (0,3) -- (1,3);
\draw[thick,dashed] (0,0) -- (1,0);
\node at (0.5,-0.35) {$-$};
\draw[thick,->] (0.5,1.5) -- (0.5,0);
\end{scope}

\begin{scope}[shift={(14,0)}]
\draw[thick,dashed] (0,3) -- (1,3);
\draw[thick,dashed] (0,1.5) -- (1,1.5);
\draw[thick,dashed] (0,0) -- (1,0);
\draw[thick,->] (0.5,2.5) -- (0.5,1.5);
\draw[thick,->] (0.5,1.5) -- (0.5,0.5);

\node at (1.5,1.5) {$=$};

\draw[thick,dashed] (2,3) -- (3,3);
\draw[thick,dashed] (2,0) -- (3,0);
\draw[thick,->] (2.5,2.5) -- (2.5,1.5);
\draw[thick] (2.5,1.5) -- (2.5,0.5);

\node at (3.5,1.5) {$=$};

\draw[thick,dashed] (4,3) -- (5,3);
\draw[thick,dashed] (4,1.5) -- (5,1.5);
\draw[thick,dashed] (4,0) -- (5,0);
\draw[thick,<-] (4.5,2.5) -- (4.5,1.5);
\draw[thick,<-] (4.5,1.5) -- (4.5,0.5);
\end{scope}

\end{tikzpicture}
    \caption{Left: four types of half-intervals. Right: composing two half-intervals results in a floating interval, with both endpoints inner.}
    \label{figure-1}
\end{figure}
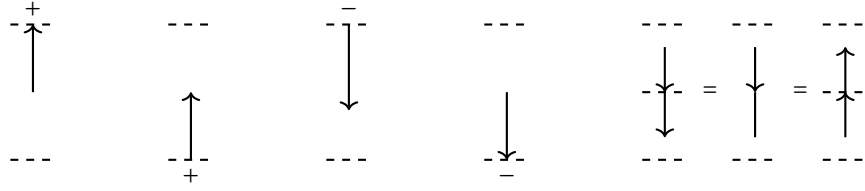

Composing two half-intervals results in a \emph{floating} interval, an oriented interval with both endpoints inside the cobordism, see Figure~\ref{figure-2}. A floating interval is an endomorphism of the empty 0-manifold $\emptyset_0$. It is clear how to define the corresponding category of oriented 1-cobordisms with inner endpoints. We denote this category by $\Cob_{\I}$. It has the same objects as $\Cob$.  It is rigid symmetric monoidal, just like $\Cob$, and contains the latter as a subcategory.

A TQFT for the category $\Cob_{\I}$ is a symmetric monoidal functor 
\begin{equation}\label{eq_func_int}
  \mcF \ : \ \Cob_{\I} \lra R\dmod . 
\end{equation}
It is determined by the same data as in Proposition~\ref{prop_classify} plus a choice of an element $v_0\in P$, giving an $R$-module map $R\lra P$,  and a $R$-module map $\widetilde{v}_0: P\lra R$.  These maps are associated to the two half-intervals that bound a $+$ boundary point, see Figure~\ref{figure-2}. The maps for the other two half-intervals are obtained by dualizing these maps. 

A floating interval evaluates to $\widetilde{v}_0(v_0)\in R$. A circle evaluates to $\mathsf{rk}(P)$, as before, see \eqref{eq_HS}. 

\begin{figure}
    \centering
\begin{tikzpicture}[scale=0.6]

\begin{scope}[shift={(0,0)}]
\node at (0.5,3.35) {$+$};
\draw[thick,dashed] (0,3) -- (1,3);
\draw[thick,dashed] (0,0) -- (1,0);
\draw[thick,<-] (0.5,3) -- (0.5,1.5); 

\node at (2,3) {$P$};
\draw[thick,<-] (2,2.5) -- (2,0.5);
\node at (2,0) {$R$};

\node at (2.8,3) {$\ni$};
\node at (3.5,3) {$v_0$};
\draw[thick,<-|] (3.5,2.5) -- (3.5,0.5);
\node at (2.8,0) {$\ni$};
\node at (3.5,0) {$1$};
\end{scope}

\begin{scope}[shift={(8,0)}]
\node at (0.5,-0.35) {$+$};
\draw[thick,dashed] (0,3) -- (1,3);
\draw[thick,dashed] (0,0) -- (1,0);
\draw[thick,->] (0.5,0) -- (0.5,1.5); 

\node at (2,3) {$R$};
\draw[thick,<-] (2,2.5) -- (2,0.5);
\node at (2.60,1.5) {$\widetilde{v}_0$};
\node at (2,0) {$P$};

\node at (2.8,3) {$\ni$};
\node at (3.0,0) {$\ni$};

\node at (4,3) {$\widetilde{v}_0(v)$};
\draw[thick,<-|] (4,2.5) -- (4,0.5);
\node at (4,0) {$v$};
\end{scope}

\begin{scope}[shift={(17,0)}]
\draw[thick,dashed] (0,3) -- (1,3);
\draw[thick,dashed] (0,0) -- (1,0);
\draw[thick,<-] (0.5,2.35) -- (0.5,0.65);
\node at (1.5,1.5) {$=$};
\node at (3,1.5) {$\widetilde{v}_0(v_0)$};
\draw[thick,dashed] (4,3) -- (5,3);
\draw[thick,dashed] (4,0) -- (5,0);
\end{scope}

\end{tikzpicture}
    \caption{Left: two half-intervals and associated maps. Right: evaluation of an interval.}
    \label{figure-2}
\end{figure}
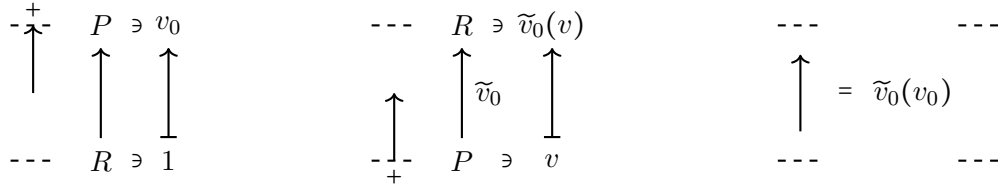

\vspace{0.1in} 

{\it Defects.}
Alternatively, we can extend the category $\Cob$ by adding defect points labelled by elements of a set $\Sigma$, resulting in a category $\CobS$.  To extend a functor $\mcF:\Cob\lra R\dmod$ to a functor $\mcF:\CobS\lra R\dmod$ (using the same notation for both functors), we pick an endomorphism $m_a:P\lra P$ for each $a\in \Sigma$. To a defect point labelled $a$ on an upward-oriented  interval we associate the map $m_a$ and to a defect point labelled $a$ on a downward-oriented interval associate the dual map $m_a^{\ast}:P^{\ast}\lra P^{\ast}$, see Figure~\ref{figure-3}. 

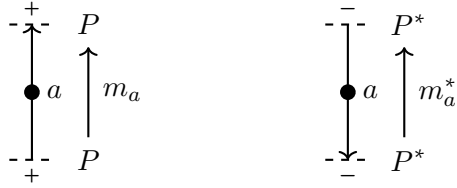
\begin{figure}
    \centering
\begin{tikzpicture}[scale=0.6]
\begin{scope}[shift={(0,0)}]
\node at (0.5, 3.35) {$+$};
\node at (0.5,-0.35) {$+$};
\draw[thick,dashed] (0,3) -- (1,3);
\draw[thick,dashed] (0,0) -- (1,0);
\draw[thick,->] (0.5,0) -- (0.5,3); 
\draw[thick,fill] (0.65,1.5) arc (0:360:1.5mm);
\node at (1.0,1.5) {$a$};

\node at (1.75,3) {$P$};
\draw[thick,<-] (1.75,2.5) -- (1.75,0.5);
\node at (2.5,1.5) {$m_a$};
\node at (1.75,0) {$P$};
\end{scope}

\begin{scope}[shift={(7,0)}]
\node at (0.5, 3.35) {$-$};
\node at (0.5,-0.35) {$-$};
\draw[thick,dashed] (0,3) -- (1,3);
\draw[thick,dashed] (0,0) -- (1,0);
\draw[thick,<-] (0.5,0) -- (0.5,3); 
\draw[thick,fill] (0.65,1.5) arc (0:360:1.5mm);
\node at (1.0,1.5) {$a$};

\node at (1.85,3) {$P^*$};
\draw[thick,<-] (1.75,2.5) -- (1.75,0.5);
\node at (2.5,1.5) {$m_a^*$};
\node at (1.85,0) {$P^*$};
\end{scope}

\end{tikzpicture}
    \caption{To a labelled dot on an interval associate an endomorphism of $P$ and the dual endomorphism of $P^{\ast}$ for the oppositely oriented interval.}
    \label{figure-3}
\end{figure}
 
\vspace{0.1in}

Now an upward-oriented interval decorated by a word $\omega=a_1\cdots a_n$, $a_i\in \Sigma$, goes to the map $m_{\omega}=m_{a_1}\cdots \, m_{a_n}:P\lra P$ under the  functor $\mcF$, see Figure~\ref{figure-4}.  A circle decorated by $\omega$ evaluates to $\tr_P(m_\omega)$, the trace of operator $m_{\omega}$ on $P$.  

\vspace{0.1in}

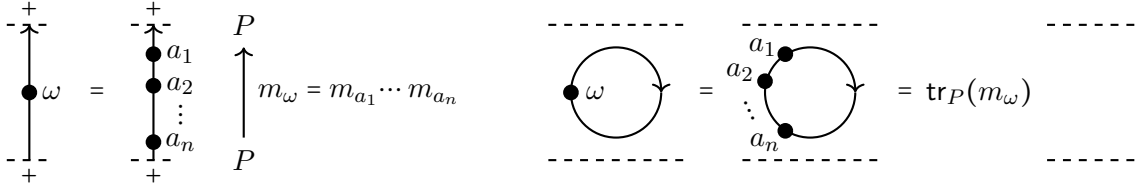
\begin{figure}
    \centering
\begin{tikzpicture}[scale=0.6]
\begin{scope}[shift={(0,0)}]
\node at (0.5, 3.35) {$+$};
\node at (0.5,-0.35) {$+$};
\draw[thick,dashed] (0,3) -- (1,3);
\draw[thick,dashed] (0,0) -- (1,0);
\draw[thick,->] (0.5,0) -- (0.5,3); 
\draw[thick,fill] (0.65,1.5) arc (0:360:1.5mm);
\node at (1.0,1.5) {$\omega$};

\node at (2,1.5) {$=$};

\node at (3.25, 3.35) {$+$};
\node at (3.25,-0.35) {$+$};
\draw[thick,dashed] (2.75,3) -- (3.75,3);
\draw[thick,dashed] (2.75,0) -- (3.75,0);
\draw[thick,->] (3.25,0) -- (3.25,3);

\draw[thick,fill] (3.40,2.35) arc (0:360:1.5mm);
\node at (3.85,2.35) {$a_1$};
\draw[thick,fill] (3.40,1.65) arc (0:360:1.5mm);
\node at (3.85,1.65) {$a_2$};
\node at (3.85,1.0) {$\vdots$};
\draw[thick,fill] (3.40,0.40) arc (0:360:1.5mm);
\node at (3.85,0.40) {$a_n$};

\node at (5.25,3) {$P$};
\draw[thick,<-] (5.25,2.5) -- (5.25,0.5);
\node at (7.80,1.45) {$m_{\omega} = m_{a_1}\cdots \:m_{a_n}$};
\node at (5.25,0) {$P$};
\end{scope}

\begin{scope}[shift={(12,0)}]
\draw[thick,dashed] (0,3) -- (3,3);
\draw[thick,dashed] (0,0) -- (3,0);
\draw[thick,<-] (2.5,1.5) arc (0:360:1);
\draw[thick,fill] (0.65,1.5) arc (0:360:1.5mm);
\node at (1.05,1.5) {$\omega$};

\node at (3.5,1.5) {$=$};

\begin{scope}[shift={(4.3,0)}]
\draw[thick,dashed] (0,3) -- (3,3);
\draw[thick,dashed] (0,0) -- (3,0);
\draw[thick,<-] (2.5,1.5) arc (0:360:1);

\draw[thick,fill] (1.10,2.35) arc (0:360:1.5mm);
\node at (0.45,2.50) {$a_1$};
\draw[thick,fill] (0.65,1.75) arc (0:360:1.5mm);
\node at (-0.05,1.95) {$a_2$};
\node at (0.15,1.1) {\rotatebox[origin=c]{-20}{$\ddots$}};
\draw[thick,fill] (1.10,0.65) arc (0:360:1.5mm);
\node at (0.50,0.40) {$a_n$};

\node at (3.6,1.5) {$=$};
\node at (5.25,1.5) {$\tr_P(m_{\omega})$};
\draw[thick,dashed] (6.75,3) -- (8.75,3);
\draw[thick,dashed] (6.75,0) -- (8.75,0);
\end{scope}

\end{scope}

\end{tikzpicture}
    \caption{Map $m_{\omega}$ is on the left. Evaluation of $\omega$-decorated circle is on the right. }
    \label{figure-4}
\end{figure}

\begin{figure}
    \centering
\begin{tikzpicture}[scale=0.6]
\begin{scope}[shift={(0,0)}]
\draw[thick,dashed] (0,4) -- (6.5,4);
\draw[thick,dashed] (0,0) -- (6.5,0);

\node at (0.5,4.35) {$+$};
\node at (1.5,4.35) {$-$};
\node at (2.5,4.35) {$-$};

\node at (0.5,-0.35) {$+$};
\node at (1.5,-0.35) {$-$};
\node at (2.5,-0.35) {$-$};
\node at (3.5,-0.35) {$+$};

\draw[thick,->] (0.5,0) .. controls (0.6,2) and (2.4,2) .. (2.5,0);
\draw[thick,fill] (0.85,0.85) arc (0:360:1.5mm);
\node at (0.20,1.05) {$a$};
\draw[thick,fill] (2.48,0.85) arc (0:360:1.5mm);
\node at (2.75,1.05) {$b$};

\draw[thick,<-] (0.5,4) .. controls (0,2.75) and (0.75,2.5) .. (0.5,2);
\draw[thick,fill] (0.48,3) arc (0:360:1.5mm); 
\node at (-0.25,3) {$a$};


\draw[thick] (1.5,4) -- (1.5,1.65);
\draw[thick,->] (1.5,1.4) -- (1.5,0);
\draw[thick,fill] (1.65,3) arc (0:360:1.5mm); 
\node at (1.9,3) {$b$};

\draw[thick,->] (2.5,4) .. controls (3.5,3.5) and (3.5,2.25) .. (3.5,2);
\draw[thick,fill] (3.40,3.25) arc (0:360:1.5mm); 
\node at (3.65,3.5) {$a$};

\draw[thick,<-] (2.75,1.75) .. controls (3.5,1.50) and (3.5,0.25) .. (3.5,0);
\draw[thick,fill] (3.65,0.75) arc (0:360:1.5mm);
\node at (3.88,1) {$c$};

\draw[thick,->] (4.5,2.75) .. controls (4.75,3.25) and (5.75,2.25) .. (6.50,3.25);
\draw[thick,fill] (5.15,2.85) arc (0:360:1.5mm);
\node at (5,3.35) {$b$};
\draw[thick,fill] (6.00,2.80) arc (0:360:1.5mm);
\node at (5.85,3.25) {$a$};

\draw[thick,<-] (6.55,1) arc (0:360:0.75); 
\draw[thick,fill] (6.45,1.55) arc (0:360:1.5mm);
\node at (6.65,1.95) {$b$};
\draw[thick,fill] (5.35,1.45) arc (0:360:1.5mm);
\node at (4.85,1.85) {$a$};
\draw[thick,fill] (5.40,0.50) arc (0:360:1.5mm);
\node at (4.80,0.35) {$c$};

\end{scope}

\end{tikzpicture}
    \caption{A morphism from oriented 1-manifold $(+--+)$ to $(+--)$ in category $\Cob_{\Sigma,\I}$.}
    \label{figure-A1}
\end{figure}
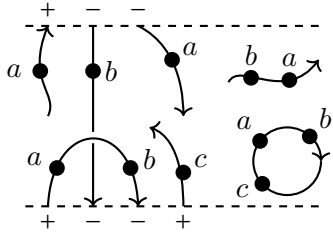

Finally, it is easy to combine defects with endpoints. This leads to the category $\CobSI$ of oriented one-cobordisms with $\Sigma$-defects and  inner endpoints. Note that defects are placed away from the endpoints. Figure~\ref{figure-A1} shows an example of a morphism in this category. 

There is a commutative square of faithful inclusions of categories, where the inclusions are identities on objects. In all four categories the objects are  are sign sequences. 
\begin{equation} \label{eq_cd_1}
\begin{CD}
\CobS  @>>> \CobSI \\
  @AAA   @AAA  \\    
 \Cob  @>>>  \Cob_{\I}
\end{CD}
\end{equation}

The functor 
\begin{equation}\mcF:\Cob_{\Sigma,\I}\lra R\dmod
\end{equation}
takes half-intervals to maps given by an element $v_0\in P$ and  a module map $\widetilde{v}_0 \in P^{\ast}$ (same as for the functor in \eqref{eq_func_int}) and an upward-oriented interval decorated by $a$ to $m_a$, as in Figure~\ref{figure-3}.  
A floating interval with a word $\omega$ on it evaluates to $\widetilde{v}_0(m_\omega v_0)$, see Figure~\ref{figure-5}. 

\vspace{0.1in}

\begin{figure}
    \centering
\begin{tikzpicture}[scale=0.6]
\begin{scope}[shift={(0,0)}]
\draw[thick,dashed] (0,3) -- (3,3);
\draw[thick,dashed] (0,0) -- (3,0);

\draw[thick,<-] (0,1.5) -- (3,1.5);
\draw[thick,fill] (1.65,1.5) arc (0:360:1.5mm);
\node at (1.5,0.9) {$\omega$};

\node at (4,1.5) {$=$};

\begin{scope}[shift={(5,0)}]
\draw[thick,dashed] (0,3) -- (3,3);
\draw[thick,dashed] (0,0) -- (3,0);

\draw[thick,<-] (0,1.5) -- (3,1.5);

\draw[thick,fill] (0.85,1.5) arc (0:360:1.5mm);
\node at (0.70,0.9) {$a_1$};
\node at (1.50,0.9) {$\cdots$};
\draw[thick,fill] (2.65,1.5) arc (0:360:1.5mm);
\node at (2.50,0.9) {$a_n$};

\node at (4,1.5) {$=$};

\node at (6.25,1.5) {$\widetilde{v}_0(m_{\omega}v_0)$};
\draw[thick,dashed] (8,3) -- (11,3);
\draw[thick,dashed] (8,0) -- (11,0);

\node at (12,1.5) {$=$};
\node at (15,1.5) {$\widetilde{v}_0(m_{\omega}v_0)\,\id_{\emptyset_0}$};
\end{scope}

\end{scope}

\end{tikzpicture}
    \caption{Evaluation of a floating $\omega$-decorated interval (both endpoints are inner). }
    \label{figure-5}
\end{figure}

\begin{remark} 
If $R=\kk$ is a field, the above choices are further simplified. That is, $P=V\cong R^n$ is a finite-dimensional $\kk$-vector space, $v_0\in V$ is a vector and $\widetilde{v}_0\in V^{\ast}$ is a covector. Maps $m_a:V\lra V$ are linear operators on $V$. The rank $\mathsf{rk}(V)=n\in \kk$. 
\end{remark} 

{\it Commutative semirings.}
One can replace the commutative ring $R$ by a commutative semiring $R$ and work with the category $\R\dmod$ of (semi)modules over $R$. Propositions~\ref{prop_classify} and~\ref{prop_classify2} can then be extended as follows. 

\begin{prop} \label{prop_classify3} Let $R$ be a commutative semiring. One-dimensional oriented TQFTs with inner points and $\Sigma$-defects taking values in $R\dmod$, 
that is, symmetric monoidal functors 
\[
\mcF \ : \ \CobSI \lra \R\dmod
\]
are classified by finitely-generated projective $R$-modules $P$ equipped with a vector $v_0\in R$, a covector $\widetilde{v}_0:P\lra R$, and endomorphisms $m_a\in \End_R(P), a\in \Sigma$. 
Functor $\mcF$ associates $P$ to a positively-oriented point, $P^{\ast}$ to a negatively-oriented point, evaluation and coevaluation maps $\cap_P, \cup_P$ in equations \eqref{eq_eval} and \eqref{eq_coev} with $M=P$ to the cup and cap cobordisms, map $R\lra P, 1\mapsto v_0$ and covector $\widetilde{v}_0$ to half-interval cobordisms and the map $m_a$ to a dot labelled $a$ on the upward-oriented interval. 
\end{prop}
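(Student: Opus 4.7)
The plan is to bootstrap from Proposition~\ref{prop_classify2} by analysing the extra generating morphisms that $\CobSI$ has beyond $\Cob$, and showing that the extra generators correspond precisely to the extra data $(v_0,\widetilde{v}_0,\{m_a\}_{a\in\Sigma})$ under the commutative diagram \eqref{eq_cd_1}.

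First I would give a generators-and-relations presentation of $\CobSI$. Objects are finite sign sequences, and as a symmetric monoidal category it is generated by the identities on $+$ and $-$, the symmetry, the four cup and cap cobordisms (all obtainable from the two in \eqref{eq_cup_cap} together with the symmetry), the four half-intervals of Figure~\ref{figure-1}, and, for each $a\in\Sigma$, a defect dot on an upward-oriented and on a downward-oriented interval. The relations are the usual symmetric monoidal axioms together with: (i) the two isotopy relations \eqref{eq_two_isot}; (ii) a relation saying that each downward half-interval is the composition of the corresponding upward half-interval with a cup or cap (via the symmetry), as in Figure~\ref{figure-1}; (iii) an analogous relation expressing each downward defect as the composition of an upward defect with cups and caps; (iv) the relation that a defect can slide past a cup or cap. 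All of these relations can be verified by a standard Morse-theoretic decomposition of one-cobordisms with defects and inner endpoints.

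Next I would restrict a given symmetric monoidal functor $\mcF:\CobSI\lra R\dmod$ to the subcategory $\Cob$ and invoke Proposition~\ref{prop_classify2} to conclude that $\mcF(+)=P$ is a finitely-generated projective $R$-module, $\mcF(-)\cong P^{\ast}$, and $\mcF$ sends cups and caps to $\cup_P,\cap_P$ of \eqref{eq_eval} and \eqref{eq_coev}. Then I would examine the remaining generators: the upward half-interval bounding $+$ is a morphism $\emptyset_0\lra +$, so its image is an $R$-module map $R\lra P$, which is the same data as an element $v_0\in P$; the upward half-interval bounded above by $+$ gives an element $\widetilde{v}_0\in P^{\ast}$; and an upward $a$-labelled defect gives an endomorphism $m_a\in\End_R(P)$. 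The remaining downward half-intervals and downward defects are determined by relations (ii) and (iii) to be the duals of $v_0$, $\widetilde{v}_0$, and $m_a$ under the coevaluation/evaluation of Proposition~\ref{prop_classify2}, so they carry no independent data.

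Conversely, given $(P,v_0,\widetilde{v}_0,\{m_a\})$ with $P$ f.g.\ projective, I would define $\mcF$ on generators exactly as above and verify that the defining relations hold. The isotopy relations are already built into Proposition~\ref{prop_classify2}; the defect-sliding relation (iv) holds because the dual map $m_a^{\ast}$ is defined so that $(\cap_P\otimes \id_P)\circ(\id_{P^{\ast}}\otimes m_a\otimes \id_P)\circ(\id_{P^{\ast}}\otimes \cup_P)=m_a^{\ast}$, which is exactly what isotoping a defect around a cup encodes. Finally, two functors with the same associated data agree on all generators, hence on all of $\CobSI$, giving the bijection up to isomorphism.

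The main obstacle is the faithfulness of the generators-and-relations description of $\CobSI$, in particular verifying that isotoping a defect across a cup or cap or through a half-interval does not produce any relation beyond (i)--(iv); this is essentially a $1$-dimensional Cerf-theory argument, and once established, the rest of the proof is a straightforward unpacking of the data attached to each generator, exactly parallel to the proofs of Propositions~\ref{prop_classify} and~\ref{prop_classify2}.
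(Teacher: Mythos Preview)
Your proposal is correct and follows essentially the same approach as the paper, which does not give an explicit proof but treats the proposition as an immediate consequence of the preceding discussion: Proposition~\ref{prop_classify2} handles the underlying $\Cob$ part, and the text before the proposition explains separately that inner endpoints contribute the data $(v_0,\widetilde{v}_0)$ and $\Sigma$-defects contribute the endomorphisms $m_a$. Your write-up is more explicit about the generators-and-relations presentation and the verification that the downward half-intervals and defects are determined by duality, which the paper leaves implicit.
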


Symmetric monoidal functors from the other two categories $\CobS,\Cob_{\I}$ in \eqref{eq_cd_1} to $R\dmod$ are classified similarly. 

\vspace{0.1in} 

{\it Quantum mechanics and one-dimensional TQFTs with defects.}
Quantum mechanics can be interpreted as a one-dimensional Quantum Field Theory, see, \textit{e.g.},~\cite{Freed13,Booz07,Skinner2018}, \cite[Chapter 10]{hori03}.  
Part of the structure of quantum mechanics is a separable Hilbert space $\mcH$, the ground state $\Omega\in\mcH$, a collection of self-adjoint operators $\{\mcO\}$, and the Hamiltonian, which is a self-adjoint operator $H:\mcH\lra \mcH$ giving rise to unitary \emph{evolution} operators $U_t = \exp(-i t \mcH/\hbar)$. Operators $\{\mcO\}$ and $\mcH$ are the \emph{observables} of the system. 

Information about the system is encoded in \emph{(vacuum) expectation values} 
\begin{equation}\label{eq_exp_value}
\langle \Omega, U_{t_n}\mcO_n U_{t_{n-1}}\cdots U_{t_1}\mcO_1U_{t_0} \Omega \rangle ,
\end{equation}
see Figure~\ref{figure-A2} (also see~\cite[Figure 6]{Freed13}), 
where the state $\Omega$ evolves for times $t_0,t_1,\ldots, t_n$ and in between acted upon by operators $\mcO_1,\ldots, \mcO_n$. At the end the inner product with $\Omega$ is computed. 

\vspace{0.1in} 

\begin{figure}
    \centering
\begin{tikzpicture}[scale=0.6]
\begin{scope}[shift={(0,0)}]
\draw[thick,dashed] (0,3) -- (6,3);
\draw[thick,dashed] (0,0) -- (6,0);

\draw[thick] (0,1.5) -- (6,1.5);
\draw[thick,fill] (0.15,1.5) arc (0:360:1.5mm);
\draw[thick,fill,white] (1.40,1.5) arc (0:360:1.5mm);
\draw[thick] (1.40,1.5) arc (0:360:1.5mm);
\draw[thick,fill,white] (2.65,1.5) arc (0:360:1.5mm);
\draw[thick] (2.65,1.5) arc (0:360:1.5mm);
\draw[thick,fill,white] (4.85,1.5) arc (0:360:1.5mm);
\draw[thick] (4.85,1.5) arc (0:360:1.5mm);
\draw[thick,fill] (6.15,1.5) arc (0:360:1.5mm);

\node at (-0.30,1) {$\Omega$};
\node at (0.65,1) {$t_0$};
\node at (1.95,1) {$t_1$};
\node at (3.7,0.9) {$\cdots$};
\node at (5.45,1) {$t_n$};
\node at (6.30,1) {$\Omega$};

\node at (1.30,2.15) {$\mathcal{O}_1$};
\node at (2.55,2.15) {$\mathcal{O}_2$};
\node at (3.7,2.15) {$\cdots$};
\node at (4.7,2.15) {$\mathcal{O}_n$};

\end{scope}    
\end{tikzpicture}
    \caption{Vacuum expectation values in quantum mechanics, see~\cite[Figure 6]{Freed13}. 
    }
    \label{figure-A2}
\end{figure}
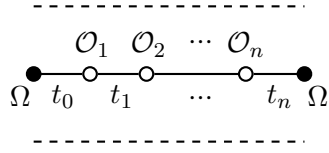

For a finite-dimensional Hilbert space, this setup is very close to the one discussed above, see Figure~\ref{figure-5} in particular. A parameter analogous to time can be added there by picking a commutative group or monoid $G$ with a homomorphism $\phi: G\lra \GL(P)$ into the group of automorphisms of $P$. Elements $g$ of $G$ now play the role of time $t\in\R$, and the analogue of time evolution in Figure~\ref{figure-A2} is shown in Figure~\ref{figure-A3} on the left, with $g_i\in G$. 

\vspace{0.1in} 

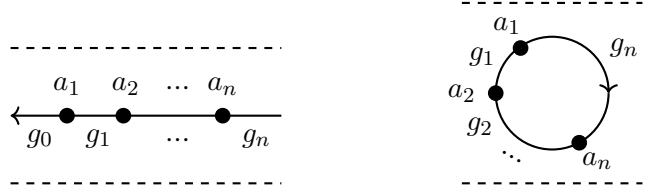
\begin{figure}
    \centering
\begin{tikzpicture}[scale=0.6]
\begin{scope}[shift={(0,0)}]
\draw[thick,dashed] (0,3) -- (6,3);
\draw[thick,dashed] (0,0) -- (6,0);

\draw[thick,<-] (0,1.5) -- (6,1.5);

\draw[thick,fill] (1.40,1.5) arc (0:360:1.5mm);
\draw[thick,fill] (2.65,1.5) arc (0:360:1.5mm);
\draw[thick,fill] (4.85,1.5) arc (0:360:1.5mm);
 
\node at (0.65,1) {$g_0$};
\node at (1.95,1) {$g_1$};
\node at (3.7,0.9) {$\cdots$};
\node at (5.45,1) {$g_n$};
 
\node at (1.25,2.15) {$a_1$};
\node at (2.55,2.15) {$a_2$};
\node at (3.7,2.05) {$\cdots$};
\node at (4.7,2.15) {$a_n$};
\end{scope}    

\begin{scope}[shift={(10,0)}]
\draw[thick,dashed] (0,4) -- (4,4);
\draw[thick,dashed] (0,0) -- (4,0);

\draw[thick,<-] (3.25,2) arc (0:360:1.25);

\draw[thick,fill] (1.45,3) arc (0:360:1.5mm);
\draw[thick,fill] (0.9,2) arc (0:360:1.5mm);
\draw[thick,fill] (2.75,0.9) arc (0:360:1.5mm);

\node at (0.4,1.25) {$g_2$};
\node at (0.45,2.80) {$g_1$};
\node at (1.10,0.60) {\rotatebox[origin=c]{60}{$\vdots$}};
\node at (3.6,3) {$g_n$};

\node at (1.00,3.50) {$a_1$};
\node at (0.00,2.00) {$a_2$};
\node at (3,0.5) {$a_n$};

\end{scope}

\end{tikzpicture}
    \caption{Left: Interval diagram for a decorated one-dimensional TQFT with defects and time-like parameters added. Right: A circle diagram with these decorations.}
    \label{figure-A3}
\end{figure}

The diagram in Figure~\ref{figure-A3} (left) evaluates to 
\[
 \widetilde{v}_0(\phi(g_0)  m_{a_1} \phi(g_1) m_{a_2} \cdots m_{a_n} \phi(g_n) v_0).
\]
Elements of $G$ can be inserted into a circle with defects as well, decorating intervals between defects also. The circle then evaluates to the trace of the operator 
$\phi(g_n)m_{a_1}\phi(g_1)\cdots \phi(g_{n-1})m_{a_n}$ on $P$, see Figure~\ref{figure-A3} (right). 

The bigger category where intervals between defects are labelled by elements of $G$ can be denoted $\Cob_{\Sigma,\I,G}$, and the above choices give a functor from this category to $R\dmod$. Category $\CobSI$ is a subcategory of $\Cob_{\Sigma,\I,G}$, with morphisms obtained  by specializing to decorations $g_i=1$ for all intervals. 

\begin{remark} For a commutative ring or semiring $R$, one-dimensional defect TQFTs associated to projective $R$-modules $P$ with additional data as above can be unified into an oriented graph TQFT, see Figure~\ref{figure-A4}. Vertices of a graph can be decorated by intertwiners between tensor products of projective modules, one for each edge of the graph. Edges of a graph will carry defects, as above. For a given finitely-generated projective module $P$, one can allow more than one type of endpoints by picking a set of elements $\{v_i\}, v_i\in P$, instead of $v_0$ and labelling the ``in'' endpoints of $P$-intervals by $i$, and likewise for the ``out'' endpoints of $P$-intervals. In particular, vertices of valency $2$ then correspond to intertwiners between projective modules, and defects of the original form correspond to endomorphisms of projective modules. 

\begin{figure}
    \centering
\begin{tikzpicture}[scale=0.6]
\begin{scope}[shift={(0,0)}]
\draw[thick,dashed] (0,3) -- (3,3);
\draw[thick,dashed] (0,0) -- (3,0);

\draw[thick,->] (0.5,0.5) -- (1,1);
\draw[thick] (1,1) -- (1.5,1.5);

\draw[thick,->] (2.5,0.5) -- (2,1);
\draw[thick] (2,1) -- (1.5,1.5);
\draw[thick,fill] (1.65,1.5) arc (0:360:1.5mm);
\draw[thick,->] (1.5,1.5) -- (1.5,2.15);
\draw[thick] (1.5,2.15) -- (1.5,2.5);

\node at (0,0.5) {$P_1$};
\node at (3,0.5) {$P_2$};
\node at (2.15,2.35) {$P_3$};

\node at (5,2.35) {$P_3$};
\draw[thick,<-] (5,1.85) -- (5,0.85);
\node at (5,0.5) {$P_1\otimes P_2$};
\end{scope}

\begin{scope}[shift={(8.5,0)}]
\draw[thick,dashed] (0,3) -- (1,3);
\draw[thick,dashed] (0,0) -- (1,0);

\node at (1,2.35) {$P_2$};
\draw[thick] (0.5,0.5) -- (0.5,2.5);
\draw[thick,fill] (0.65,1.5) arc (0:360:1.5mm);
\node at (1,0.5) {$P_1$};

\node at (2.5,2.35) {$P_2$};
\draw[thick,<-] (2.5,1.85) -- (2.5,0.95);
\node at (2.5,0.5) {$P_1$};
\end{scope}

\begin{scope}[shift={(14,0)}]
\node at (-0.20,2.05) {$P$};
\node at (0.5,3.45) {$+$};
\draw[thick,dashed] (0,3) -- (1,3);
\draw[thick,dashed] (0,0) -- (1,0);

\draw[thick,->] (0.5,1) -- (0.5,2.25);
\draw[thick] (0.5,2.20) -- (0.5,3.0);
\node at (1,1) {$i$};

\node at (2.5,3.35) {$-$};
\draw[thick,dashed] (2,3) -- (3,3);
\draw[thick,dashed] (2,0) -- (3,0);

\node at (3.1,2) {$P$};
\draw[thick] (2.5,1) -- (2.5,2.05);
\draw[thick,<-] (2.5,2.0) -- (2.5,3.0);
\node at (3,1) {$j$};
\end{scope}

\begin{scope}[shift={(19.5,0)}]

\draw[thick,->] (1,1) .. controls (0.15,1.25) and (0.15,2.75) .. (1,3);
\draw[thick,fill] (0.65,2.55) arc (0:360:1.5mm);
\draw[thick,fill] (0.65,1.45) arc (0:360:1.5mm);
\node at (0.85,2) {$P_1$};

\draw[thick] (1,1) .. controls (1.85,1.25) and (1.85,2.75) .. (1,3);
\draw[thick,fill] (1.65,2.55) arc (0:360:1.5mm);
\draw[thick,fill] (1.65,1.45) arc (0:360:1.5mm);
\node at (2.10,2) {$P_2$};

\draw[thick,->] (1,3) .. controls (1.25,4) and (2.25,4) .. (2.5,4);
\draw[thick] (1,1) .. controls (1.25,0) and (2.25,0) .. (2.5,0);

\draw[thick] (2.5,4) .. controls (4,3.75) and (4,0.25) .. (2.5,0);
\draw[thick,fill] (3.5,3.25) arc (0:360:1.5mm);
\draw[thick,fill] (2.65,0) arc (0:360:1.5mm);
\node at (3.30,4.15) {$P_3$};

\draw[thick,fill] (3.80,2) arc (0:360:1.5mm);
\draw[thick] (3.65,2) -- (4.15,2.55);
\draw[thick,<-] (4.145,2.545) -- (4.5,3);
\node at (4.65,3.4) {$i$};
\node at (4.35,2.00) {$P_4$};

\end{scope}

\end{tikzpicture}
    \caption{From left to right: a trivalent vertex for an intertwiner $P_1\otimes P_2\lra P_3$, a valency two vertex is an intertwiner between two modules, a vector $v_i$  in $P$ and a covector $v_j:P\lra R$,  a closed network of intertwiners.  }
    \label{figure-A4}
\end{figure}
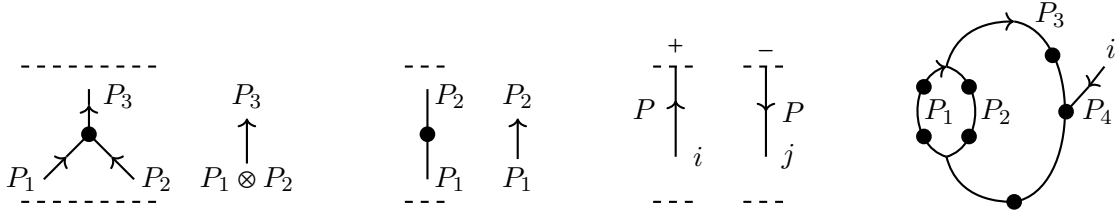

\end{remark}

%
%

\section{Finite state automata and one-dimensional TQFTs over the Boolean semiring}\label{sec_aut_TQFT}


\subsection{A one-dimensional TQFT from a nondeterministic finite automaton} \label{subset_oned_from}
\quad 

\vspace{0.05in} 

{\it Regular languages and automata.}
Given a finite set $\Sigma$ of letters, by a \emph{language} or \emph{interval language} we mean a subset $L\subset \Sigma^{\ast}$ of the free monoid on $\Sigma$. A language is called \emph{regular} if it is accepted by a finite-state automaton, equivalently, if it can be described by a regular expression \cite{Kleene56, eilenberg1974automata, Conway71}.  

Suppose $L_\I\subset \Sigma^{\ast}$  is a regular language and $(Q,\delta,Q_{\init},Q_{\t})$ is a nondeterministic finite automaton (NFA) accepting $L_\I$. Here $Q$ is a finite set of states, $\delta:Q\times \Sigma\lra \mathscr{P}(Q)$ is the transition function $(\mathscr{P}(Q)$ is the powerset of $Q$), and $Q_{\init},Q_{\t}\subset Q$ are the subsets of initial, respectively accepting, states.   
We denote $(Q,\delta,Q_{\iin},Q_{\t})$ by $(Q)$, for short. It can be thought of as a decorated oriented graph, denoted $\Gamma_{(Q)}$ or $\Gamma(Q)$,  with the set of vertices $Q$, a directed edge from each state $q$ to each $q'\in \delta(q,a)$ marked by $a\in \Sigma$, and subsets $Q_{\iin},Q_{\t}$ of distinguished vertices. 

A word $\omega\in L_\I$ if and only if there is a path in $\Gamma(Q)$ from some initial to some accepting state where consequent letters $a_1,\ldots, a_n$ in the path read $\omega=a_1\cdots a_n$. 

\vspace{0.1in} 

{\it $\Bool$-modules.} 
Let $\Bool=\{0,1|1+1=1\}$ be the Boolean semiring on two elements.

A $\Bool$-module $M$ is an abelian idempotent monoid: $x+x=x$ for any $x\in M$, and the unit element is denoted $0$, $0+x=x$. Such $M$ comes with a partial order $x\le y$ if and only if $x+y=y$ making $M$ into a \emph{sup-semilattice} with $0$, where $x\vee y := x+y$. 
Morphisms in the category $\Bool\dmod$ of $\Bool$-modules take $0$ to $0$ and respect addition. Finite free $\Bool$-module is isomorphic to $\Bool^n$, the column module with elements $(x_1,\ldots, x_n)^T, x_i\in \Bool$, with termwise addition and multiplication by elements of $B$. Morphisms $\Bool^m\lra \Bool^n$ are classified by $n\times m$ Boolean matrices, with the usual addition and product rules.   

For further information and references on Boolean (semi)modules, we refer to~\cite[Section 3]{IK-top-automata}.  

\vspace{0.1in} 

{\it Boolean linear algebra from an automaton.}
To automaton $(Q)$ associate the free $\Bool$-module $\Bool Q$ with the basis $Q$ and consider the dual free module $(\Bool Q)^{\ast}\cong \Bool {Q^{\ast}}$, where set $Q^{\ast}$ consists of elements $q^{\ast}$, $q\in Q$. The bilinear pairing between these two free $\Bool$-modules is given by $q^{\ast}(q')=\delta_{q,q'}$. 

Transition function $\delta$ describes a right action of the free monoid $\Sigma^{\ast}$ on $\Bool Q$, with 
\begin{equation} \label{eq_sum} 
q\, a=\sum_{q'\in \delta(q,a)} q'.
\end{equation} 

The set $Q_{\iin}$ of initial states gives the initial vector $\sum_{q\in Q_{\iin}} q$, also denoted $Q_{\iin}$.  

The set $Q_{\t}$ of accepting states describes a $\Bool$-linear map 
\begin{equation}\label{eq_trace}
Q_{\t}^{\ast} :\Bool Q\lra \Bool, \hspace{1cm} 
Q_{\t}^{\ast}(q)=
\begin{cases} 1 & \mathrm{if} \ q\in Q_{\t}, \\
   0  & \mathrm{otherwise}. 
   \end{cases} 
\end{equation}
Any word $\omega\in\Sigma^{\ast}$ can be applied letter-by-letter to the initial state $Q_{\iin}\in \Bool Q$ and then evaluated via $Q_{\t}^{\ast}$ , resulting in a map  
\[  \alphaiQ \ : \ \Sigma^{\ast}\lra \Bool, 
\hspace{0.5cm}
\omega \longmapsto Q_{\t}^{\ast}(Q_{\iin}\omega), 
\hspace{0.5cm} 
\omega \in \Sigma^{\ast}. 
\]

A word $\omega \in L_\I$ if and only if  $\alphaiQ(\omega):=Q_{\t}^{\ast}(Q_{\iin}\omega)=1$, which is a  way to rephrase that automaton $(Q)$ describes the regular language $L_\I$: 
\[
L_\I \ = \ \alpha^{-1}_{\I,(Q)} (1) \ \subset  \ \Sigma^{\ast}. 
\]

We may also write $\alpha_\I$ in place of $\alphaiQ$, for short, if the automaton $(Q)$ is fixed. 

\vspace{0.07in} 

The action of $\Sigma^{\ast}$ on $\Bool Q$  can be described via Boolean $Q\times Q$ matrices, that is, matrices with coefficients in $\Bool$ with rows and columns enumerated by states $q\in Q$ of $(Q)$. A matrix $\mathsf{M}\in \mathsf{Mat}_Q(\Bool)$ acts by right multiplication on $\Bool$-valued row vectors, which constitute a free $\Bool$-module isomorphic to $\Bool Q$.  To $a\in \Sigma$ associate the matrix $\mathsf{M}_a$ with the coefficient $\mathsf{M}_{a,q,q'}=1$ if and only if $q'\in \delta(q,a)$.

The vector $Q_{\init}$ corresponds to a Boolean row matrix with $1$ in positions $q\in Q_{\init}$ and the covector $Q_{\t}^{\ast}$ to a column matrix with $1$ in positions $q\in Q_{\t}$. 

\vspace{0.1in}

{\it A one-dimensional TQFT $\mcFQ$.}
We build a one-dimensional TQFT $\mcFQ$ with defects and inner endpoints associated to $(Q)$ by assigning $\Bool Q$ to a positively-oriented point $+$  and the dual module $\Bool Q^{\ast}$ to a negatively-oriented point $-$. We call these $\Bool$-modules \emph{the state spaces} of $+$ and $-$, respectively, and write 
\begin{equation}\label{eq_funcFQ} 
\mcFQ(+) \ := \ \Bool Q, \hspace{0.75cm}  \mcFQ(-) \ := \ \Bool Q^{\ast}.
\end{equation}
\begin{remark}
Our sign convention is opposite to that of~\cite{IK-top-automata} but matches the notations in Section~\ref{subsection:projectives}.  We use the right action of $\Sigma^{\ast}$ on $\Bool Q$ above for a better match with the automata theory literature, although switching to the left action would be a better match with the literature from the mathematics side.  
\end{remark}

We may write $\mcF$ instead of $\mcFQ$ if an automaton $(Q)$ is fixed. Our TQFT will be a symmetric monoidal functor 
\begin{equation}
\mcFQ\ : \ \Cob_{\Sigma,\I}\lra \Bool\dmod
\end{equation}
from the category of oriented one-dimensional cobordisms with defects and inner endpoints to the category of $\Bool$-modules.  In fact, objects in the image of $\mcFQ$ will be finite free $\Bool$-modules. 

To a sign sequence $\underline{\varepsilon}=(\varepsilon_1,\ldots, \varepsilon_k)$, $\varepsilon_i\in \{+,-\}$, we assign the state space
\begin{equation}\label{eq_F_eps}
 \mcFQ(\varepsilon) \ := \ \mcFQ(\varepsilon_1)\otimes \cdots\otimes  \mcFQ(\varepsilon_k), 
\end{equation}
which is a tensor product of free $\Bool$-modules $\Bool Q$ and $\Bool Q^{\ast}$. In particular, $\mcFQ(\varepsilon)$ is a  free $\Bool$-module of rank $|Q|^k$. 

\begin{figure}
    \centering
\begin{tikzpicture}[scale=0.6]
\begin{scope}[shift={(-0.5,0)}]
\draw[thick,dashed] (0,3) -- (1,3);
\draw[thick,dashed] (0,0) -- (1,0);
\node at (0.5,3.35) {$+$};
\draw[thick,->] (0.5,0) -- (0.5,3);
\draw[thick,fill] (0.65,1.45) arc (0:360:1.5mm);
\node at (1,1.45) {$a$};
\node at (0.5,-0.35) {$+$};

\begin{scope}[shift={(0.55,0)}]
\node at (2,3) {$\Bool Q$};
\draw[thick,->] (1.95,0.45) -- (1.95,2.65);
\node at (2.60,1.45) {$m_a$};
\node at (2,0) {$\Bool Q$};

\node at (3.55,3) {$qa$};
\node at (2.80,3) {$\ni$};
\draw[thick,|->] (3.55,0.45) -- (3.55,2.65);
\node at (2.80,0) {$\ni$};
\node at (3.55,0) {$q$};
\end{scope}
\end{scope}

\begin{scope}[shift={(7,0)}]
\draw[thick,dashed] (0,3) -- (1,3);
\draw[thick,dashed] (0,0) -- (1,0);
\node at (0.5,3.35) {$+$};
\draw[thick,->] (0.5,0) -- (0.5,3);
\draw[thick,fill] (0.65,1.45) arc (0:360:1.5mm);
\node at (1,1.45) {$\omega$};
\node at (0.5,-0.35) {$+$};

\node at (1.75,1.45) {$=$};

\begin{scope}[shift={(2,0)}]
\draw[thick,dashed] (0,3) -- (1,3);
\draw[thick,dashed] (0,0) -- (1,0);
\node at (0.5,3.35) {$+$};
\draw[thick,->] (0.5,0) -- (0.5,3);
\draw[thick,fill] (0.65,2.15) arc (0:360:1.5mm);
\node at (1.15,2.15) {$a_n$};
\node at (1,1.5) {$\vdots$};

\draw[thick,fill] (0.65,0.85) arc (0:360:1.5mm);
\node at (1.15,0.80) {$a_1$};
\node at (0.5,-0.35) {$+$};
\end{scope}

\begin{scope}[shift={(3.05,0)}]

\node at (1.75,3) {$\Bool Q$};
\draw[thick,->] (1.7,0.45) -- (1.7,2.65);
\node at (2.35,1.45) {$m_{\omega}$};
\node at (1.75,0) {$\Bool Q$};

\node at (5.15,3) {$q\omega=q a_1 a_2\cdots a_n$};
\node at (2.63,3) {$\ni$};
\draw[thick,|->] (5,0.45) -- (5,2.65);
\node at (3.5,0) {$\ni$};
\node at (5,0) {$q$};
\end{scope}
\end{scope}

\begin{scope}[shift={(19,0)}]
\draw[thick,dashed] (0,3) -- (1,3);
\draw[thick,dashed] (0,0) -- (1,0);
\node at (0.5,3.35) {$-$};
\draw[thick,<-] (0.5,0) -- (0.5,3);
\draw[thick,fill] (0.65,1.45) arc (0:360:1.5mm);
\node at (1,1.45) {$a$};
\node at (0.5,-0.35) {$-$};

\begin{scope}[shift={(0.85,0)}]
\node at (1.75,3) {$\Bool Q^*$};
\draw[thick,->] (1.7,0.45) -- (1.7,2.65);
\node at (2.35,1.45) {$m_a^*$};
\node at (1.75,0) {$\Bool Q^*$};

\node at (3.53,3) {$aq^*$};
\node at (2.63,3) {$\ni$};
\draw[thick,|->] (3.4,0.45) -- (3.4,2.65);
\node at (2.63,0) {$\ni$};
\node at (3.45,0) {$q^*$};
\end{scope}
\end{scope}

\end{tikzpicture}
    \caption{Action of $\Sigma^{\ast}$ on $\Bool Q$ and $\Bool Q^{\ast}$, with word $\omega=a_1\cdots a_n$. }
    \label{figure-1.2}
\end{figure}

To a point labelled $a\in\Sigma$ on an upward-oriented vertical line, the functor $\mcFQ$ assigns the operator $m_a: \Bool Q\lra \Bool Q$ of multiplication by $a$ (right action in \eqref{eq_sum}), see Figure~\ref{figure-1.2} on the left. 
A sequence of points labelled $a_1,\ldots, a_n$ on a upward line describes a word $\omega=a_1\cdots a_n$ which, upon applying $\mcFQ$, acts by the composition $m_{\omega}$ of these operators, $m_{\omega}(q)=q\omega=q a_1\cdots a_n$, see Figure~\ref{figure-1.2} in the middle. 

To a point labelled $a$ on a downward-oriented vertical line we assign the dual operator $m_a^{\ast}: \Bool Q^{\ast}\lra \Bool Q^{\ast}$. Writing $m_a$ via the Boolean square matrix $\mathsf{M}_a$ in the unique basis $Q$ of $\Bool Q$, the dual operator $m_a^{\ast}$ is given by the transposed matrix $\mathsf{M}_a^{T}$ in the dual basis $Q^{\ast}$ of $\Bool Q^{\ast}$.

\begin{figure}
    \centering
\begin{tikzpicture}[scale=0.6]
\begin{scope}[shift={(10.5,0)}]
\draw[thick,dashed] (0,3) -- (3,3);
\draw[thick,dashed] (0,0) -- (3,0);
\node at (0.5,3.35) {$+$};
\node at (2.5,3.35) {$-$};
\draw[thick,<-] (0.5,3) .. controls (0.75,1) and (2.25,1) .. (2.5,3);

\node at (4.90,3) {$\Bool Q \otimes \Bool Q^*$};
\draw[thick,->] (4.75,0.45) -- (4.75,2.65);
\node at (4.75,0) {$\Bool$};
\node at (5.65,1.45) {$\coev$};
\end{scope}

\begin{scope}[shift={(21,0)}]
\draw[thick,dashed] (0,3) -- (3,3);
\draw[thick,dashed] (0,0) -- (3,0);
\node at (0.5,-0.35) {$-$};
\node at (2.5,-0.35) {$+$};
\draw[thick,<-] (0.5,0) .. controls (0.75,2) and (2.25,2) .. (2.5,0);

\node at (4.95,0) {$\Bool Q^* \otimes \Bool Q$};
\draw[thick,->] (5.1,0.45) -- (5.1,2.65);
\node at (5.1,3) {$\Bool$};
\node at (5.70,1.45) {$\ev$};
\end{scope}    
\end{tikzpicture}
    \caption{Coevaluation and evaluation morphisms associated to \emph{cup} and \emph{cap} cobordisms.}
    \label{figure-1.1a}
\end{figure}
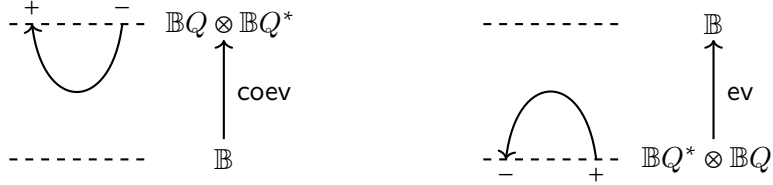

To a \emph{cup}, respectively a \emph{cap}, cobordism, see Figure~\ref{figure-1.1a}, the functor $\mcFQ$ associates the coevaluation map, respectively the evaluation map 
\begin{eqnarray}
\mathsf{coev} & : &  \Bool \lra \Bool Q \otimes \Bool Q^{\ast}, \hspace{0.75cm}  1 \longmapsto \sum_{q\in Q} q\otimes q^{\ast}, \\
\mathsf{ev} & : & \Bool Q^{\ast} \otimes \Bool Q \lra \Bool, \hspace{0.75cm}   q_1^{\ast} \otimes q_2 \longmapsto \delta_{q_1,q_2}, 
\end{eqnarray} 

These maps are compatible with maps $m_a$ and $m_a^{\ast}$ induced by a dot labelled $a$ (see Figure~\ref{figure-1.3} bottom row) and they satisfy the isotopy relations in Figure~\ref{figure-1.3} in top right, middle and bottom rows.

\begin{figure}
    \centering
\begin{tikzpicture}[scale=0.6]

\begin{scope}[shift={(0,0)}]
\draw[thick,dashed] (0,3) -- (1,3);
\draw[thick,dashed] (0,0) -- (1,0);
\node at (0.5,3.35) {$+$};
\draw[thick,->] (0.5,1.5) -- (0.5,3);
\end{scope}

\begin{scope}[shift={(0.8,0)}]
\node at (1.75,3) {$\Bool Q$};
\draw[thick,->] (1.7,0.45) -- (1.7,2.65);
\node at (1.7,0) {$\Bool$};

\node at (3.30,3) {$Q_{\init}$};
\node at (2.55,3) {$\ni$};
\draw[thick,|->] (3.30,0.45) -- (3.30,2.65);
\node at (2.55,0) {$\ni$};
\node at (3.30,0) {$1$};
\end{scope}

\begin{scope}[shift={(7,0)}]
\draw[thick,dashed] (0,3) -- (1,3);
\draw[thick,dashed] (0,0) -- (1,0);
\node at (0.5,-0.35) {$+$};
\draw[thick] (0.5,0) -- (0.5,0.75);
\draw[thick,->] (0.5,0.75) -- (0.5,1.5);
\end{scope}

\begin{scope}[shift={(7.8,0)}]
\node at (1.7,3) {$\Bool$};
\draw[thick,->] (1.7,0.45) -- (1.7,2.65);
\node at (1.75,0) {$\Bool Q$};
\node at (2.30,1.45) {$Q_{\t}^{\ast}$};

\end{scope}

\begin{scope}[shift={(12.5,0)}]
\draw[thick,dashed] (0,3) -- (1,3);
\draw[thick,dashed] (0,0) -- (1,0);
\node at (0.5,3.35) {$-$};
\draw[thick,<-] (0.5,1.5) -- (0.5,3);
\node at (1.5,1.5) {$:=$};

\begin{scope}[shift={(2,0)}]
\draw[thick,dashed] (0,3) -- (2,3);
\draw[thick,dashed] (0,0) -- (2,0);
\node at (1.5,3.35) {$-$};
\draw[thick,<-] (1.5,1.5) -- (1.5,3);

\draw[thick] (0.5,1.5) .. controls (0.55,0.5) and (1.45,0.5) .. (1.5,1.5);

\end{scope}
\end{scope}

\begin{scope}[shift={(20,0)}]
\draw[thick,dashed] (0,3) -- (1,3);
\draw[thick,dashed] (0,0) -- (1,0);
\node at (0.5,-0.35) {$-$};
\draw[thick,->] (0.5,1.5) -- (0.5,0);
\node at (1.5,1.5) {$:=$};

\begin{scope}[shift={(2,0)}]
\draw[thick,dashed] (0,3) -- (2,3);
\draw[thick,dashed] (0,0) -- (2,0);
\node at (1.5,-0.35) {$-$};
\draw[thick] (1.5,1.5) -- (1.5,0);

\draw[thick,->] (0.5,1.5) .. controls (0.55,2.5) and (1.45,2.5) .. (1.5,1.5);

\end{scope}
\end{scope}

\end{tikzpicture}
    \caption{Left: maps assigned to the half-intervals with a $+$ boundary points. Right: defining maps for half-intervals with a $-$ boundary point.}
    \label{figure-1.1}
\end{figure}
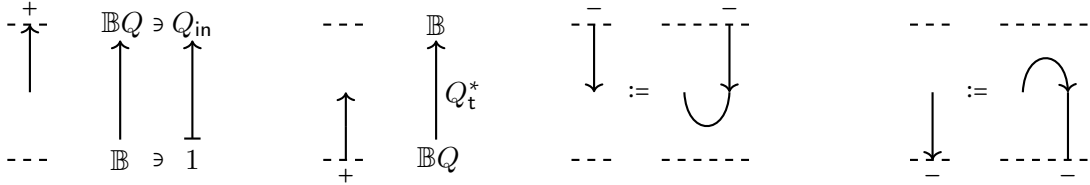

\vspace{0.07in}

A \emph{half-interval} is a connected component of a one-cobordism which has one (outer) boundary and one inner endpoint, see Section~\ref{subsec_floating}. 

To a half-interval ending in $+$ at the top boundary we assign $Q_{\init}\in \Bool Q$, thinking of it as describing a map $\Bool\lra \Bool Q$ which takes $1$ to $Q_{\init}$. To a half-interval ending in $+$ at the bottom we assign $\Bool$-linear map $Q_{\t}^{\ast}$ in \eqref{eq_trace}. Figure~\ref{figure-1.1} on the left explains these assignments. The other two half-intervals (those with a $-$ boundary endpoint) are given by composing the intervals with a $+$ endpoint with a cup or a cap, respectively, see Figure~\ref{figure-1.1} on the right. The map for the half-interval terminating with $-$ at the top, respectively at the bottom, is the dual $Q_{\t}:\Bool \lra \Bool Q^{\ast}$ of the trace map, $Q_{\t}(1)=\sum_{q\in Q_{\t}}q$, respectively, the dual $Q_{\init}^{\ast}:\Bool Q^{\ast}\lra \Bool$ of the unit map. 

The functor $\mcFQ$ takes the transposition cobordism given by a crossing with various orientations to the transposition isomorphism $V\otimes W \lra W\otimes V$ of the tensor products of $\Bool$-modules $V,W\in \{\Bool Q,\Bool Q^{\ast}\}$.  The following proposition is straightforward to check.

\begin{prop} \label{prop_bijection}  $ $
\begin{enumerate}
\item A nondeterministic automaton $(Q)$ gives rise to a symmetric monoidal functor 
\[\mcFQ \ : \ \CobSI \lra \Bool\fmod 
\]
from $\CobSI$ to the category of free $\Bool$-modules.  
\item Isomorphism classes of such functors are in a bijection with isomorphism classes of nondeterministic automata.  
\end{enumerate}
\end{prop}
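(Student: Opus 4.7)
My plan is to reduce the statement to Proposition~\ref{prop_classify3} (specialized to the commutative semiring $\Bool$) together with one basic lemma about free Boolean modules. Proposition~\ref{prop_classify3} already classifies symmetric monoidal functors $\CobSI\to\Bool\dmod$ in terms of isomorphism classes of tuples $(P,v_0,\widetilde v_0,\{m_a\}_{a\in\Sigma})$ with $P$ a finitely generated projective $\Bool$-module, $v_0\in P$, $\widetilde v_0\in P^{\ast}$, and $m_a\in\End_{\Bool}(P)$. Restricting the target category to $\Bool\fmod$ amounts to restricting $P$ to be finitely generated \emph{free}, so that the proof of the proposition becomes an identification of free-based tuples with nondeterministic automata.

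For part (1), I would set $P:=\Bool Q$, $v_0:=\sum_{q\in Q_{\iin}}q$, $\widetilde v_0:=Q_{\t}^{\ast}$ as in \eqref{eq_trace}, and let $m_a$ be right multiplication by $a$ from \eqref{eq_sum}. All cobordism relations (isotopy of cup and cap, sliding and composition of defects, symmetric monoidal naturality) then hold automatically by Proposition~\ref{prop_classify3}, and the values of $\mcFQ$ on sign sequences as in \eqref{eq_F_eps} are tensor powers of $\Bool Q$ and $\Bool Q^{\ast}$, hence free. This produces the desired functor $\mcFQ:\CobSI\to\Bool\fmod$.

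For part (2), given a symmetric monoidal functor $\mcF$ landing in $\Bool\fmod$, Proposition~\ref{prop_classify3} yields a tuple with $P=\mcF(+)$ free of finite rank. Choosing a basis identifies $P\cong\Bool Q$ for a finite set $Q$; under this identification $v_0$ corresponds to a subset $Q_{\iin}\subseteq Q$ (elements of $\Bool Q$ being subsets of $Q$ via the join-semilattice structure), $\widetilde v_0$ corresponds to a subset $Q_{\t}\subseteq Q$ via \eqref{eq_trace}, and each endomorphism $m_a$ is a Boolean $Q\times Q$ matrix whose support is the graph of a transition function $\delta(-,a):Q\to\mathscr{P}(Q)$. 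This recovers the NFA $(Q,\delta,Q_{\iin},Q_{\t})$.

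The only step not already subsumed by Proposition~\ref{prop_classify3}, and the place where the main obstacle lies, is matching the isomorphism relations on the two sides. The key lemma I would prove is that $\Aut_{\Bool}(\Bool Q)=\mathsf{Sym}(Q)$, the permutation group of the basis: this holds because the join-irreducible elements of $\Bool Q$ (nonzero elements not expressible as $x\vee y$ with $x,y$ strictly below them) are precisely the basis vectors $q\in Q$, and the set of join-irreducibles is preserved by any $\Bool$-module automorphism. Granting this lemma, the ambiguity in the choice of basis $P\cong\Bool Q$ in part (2) matches exactly the relabelling ambiguity between isomorphic automata, and any monoidal natural isomorphism $\mcFQ\simeq \mcF_{(Q')}$ is determined by its component at $+$, which by the lemma is induced by a bijection $Q\to Q'$ that must intertwine $Q_{\iin}$, $Q_{\t}$, and the transition functions — i.e.\ an isomorphism of NFAs. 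This yields the claimed bijection between isomorphism classes.
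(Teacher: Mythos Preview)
Your proposal is correct and follows essentially the same route as the paper: the paper treats the proposition as ``straightforward to check'' after having built $\mcFQ$ explicitly and having proved Proposition~\ref{prop_classify3}, and your argument simply makes this reduction explicit. Your lemma $\Aut_{\Bool}(\Bool Q)\cong \mathsf{Sym}(Q)$ via join-irreducibles is exactly the missing detail the paper leaves implicit when matching isomorphism classes of functors with isomorphism classes of automata; it is correct and is the right way to close part~(2).
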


We call automata $(Q_1)$ and $(Q_2)$ over the same set of letters $\Sigma$ \emph{isomorphic} if there is a bijection between their states that converts the transition function, initial and accepting states for one automaton into the transition function, initial and accepting states for the other automaton. 

In \eqref{eq_funcFQ} one can replace $\Bool\fmod$ by the smaller full subcategory of finite free $\Bool$-modules. 
Functor $\mcFQ$ intertwines monoidal structures on the two categories due to \eqref{eq_F_eps} and intertwines rigid and symmetric structures of the two categories as well. 
$\square$

\vspace{0.1in}

{\it Evaluation of intervals.}
One can now arbitrary compose these generating cobordisms. By a closed cobordism we mean a cobordism from the empty sequence $\emptyset_0$ to itself. Such a cobordism is a disjoint union of oriented intervals and circles with defects. 

An interval with defects is determined by the word $\omega$ read along it in the orientation direction and evaluates to $\alphai(Q_{\init}\omega)\in \Bool$, see Figure~\ref{figure-B1}. The evaluation is $1$ if $\omega\in L_\I$ and $0$ otherwise. Isotopy relations in Figure~\ref{figure-1.3} ensure that the evaluation of the interval does not depend on its presentation as a monoidal concatenation of basis morphisms. 

\vspace{0.1in} 

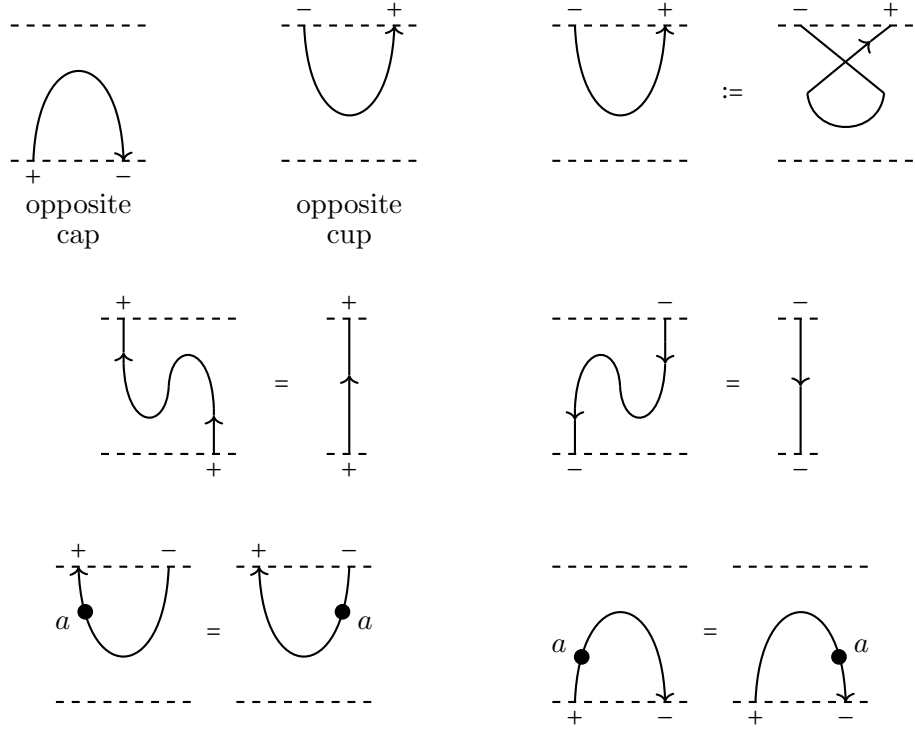
\begin{figure}
    \centering
    \begin{tikzpicture}[scale=0.6]
    \begin{scope}[shift={(0,0)}]
    \draw[thick,dashed] (0,3) -- (3,3);
    \draw[thick,dashed] (0,0) -- (3,0);
    
    \node at (0.5,-0.35) {$+$};
    \node at (2.5,-0.35) {$-$};
    
    \draw[thick,->] (0.5,0) .. controls (0.6,2.65) and (2.4,2.65) .. (2.5,0);
    \node at (1.5,-1) {opposite};
    \node at (1.5,-1.75) {cap};
    \end{scope}

    \begin{scope}[shift={(6,0)}]
    \draw[thick,dashed] (0,3) -- (3,3);
    \draw[thick,dashed] (0,0) -- (3,0);
    
    \node at (0.5,3.35) {$-$};
    \node at (2.5,3.35) {$+$};
    \draw[thick,->] (0.5,3) .. controls (0.6,0.35) and (2.4,0.35) .. (2.5,3);
    \node at (1.5,-1) {opposite};
    \node at (1.5,-1.75) {cup};
    \end{scope}

    \begin{scope}[shift={(12,0)}]
    \draw[thick,dashed] (0,3) -- (3,3);
    \draw[thick,dashed] (0,0) -- (3,0);
    
    \node at (0.5,3.35) {$-$};
    \node at (2.5,3.35) {$+$};
    \draw[thick,->] (0.5,3) .. controls (0.6,0.35) and (2.4,0.35) .. (2.5,3);
    
    \node at (4,1.5) {$:=$};
    \end{scope}

    \begin{scope}[shift={(17,0)}]
    \draw[thick,dashed] (0,3) -- (3,3);
    \draw[thick,dashed] (0,0) -- (3,0);
    
    \node at (0.5,3.35) {$-$};
    \node at (2.5,3.35) {$+$};
    \draw[thick] (0.65,1.5) .. controls (0.75,0.5) and (2.25,0.5) .. (2.35,1.5);
    \draw[thick,->] (0.65,1.5) -- (2.05,2.65);
    \draw[thick] (2.05,2.65) -- (2.5,3);
    \draw[thick] (2.35,1.5) -- (0.5,3);
    \end{scope} 
    
    \begin{scope}[shift={(2,-6.5)}]
    \draw[thick,dashed] (0,3) -- (3,3);
    \draw[thick,dashed] (0,0) -- (3,0);
    
    \node at (0.5, 3.35) {$+$};
    \node at (2.5,-0.35) {$+$};
    
    \draw[thick] (0.5,3) -- (0.5,2.25);
    \draw[thick,<-] (0.5,2.25) -- (0.5,2.0);    
    \draw[thick,<-] (2.5,0.85) -- (2.5,0);
    \draw[thick] (2.5,1.0) -- (2.5,0.85);
    
    \draw[thick] (0.5,2) .. controls (0.55,0.5) and (1.45,0.5) .. (1.5,1.5);
    \draw[thick] (1.5,1.5) .. controls (1.55,2.5) and (2.45,2.5) .. (2.5,1);    
    \node at (4,1.5) {$=$};
    \end{scope}
    
    \begin{scope}[shift={(7,-6.5)}]
    \draw[thick,dashed] (0,3) -- (1,3);
    \draw[thick,dashed] (0,0) -- (1,0);
    \node at (0.5, 3.35) {$+$};
    \node at (0.5,-0.35) {$+$};
    \draw[thick,<-] (0.5,1.75) -- (0.5,0);
    \draw[thick] (0.5,3) -- (0.5,1.75);
    \end{scope}
    
    \begin{scope}[shift={(12,-6.5)}]
    \draw[thick,dashed] (0,3) -- (3,3);
    \draw[thick,dashed] (0,0) -- (3,0);
    
    \node at (2.5, 3.35) {$-$};
    \node at (0.5,-0.35) {$-$};
    
    \draw[thick] (2.5,3) -- (2.5,2.5);
    \draw[thick,->] (2.5,2.5) -- (2.5,2.0);
    \draw[thick] (0.5,0.75) -- (0.5,0);
    \draw[thick,->] (0.5,1.0) -- (0.5,0.75); 

    \draw[thick] (0.5,1) .. controls (0.55,2.5) and (1.45,2.5) .. (1.5,1.5);
    \draw[thick] (1.5,1.5) .. controls (1.55,0.5) and (2.45,0.5) .. (2.5,2);   
    \node at (4,1.5) {$=$};
    \end{scope}
    
    \begin{scope}[shift={(17,-6.5)}]
    \draw[thick,dashed] (0,3) -- (1,3);
    \draw[thick,dashed] (0,0) -- (1,0);
    \node at (0.5, 3.35) {$-$};
    \node at (0.5,-0.35) {$-$};
    \draw[thick] (0.5,1.5) -- (0.5,0);
    \draw[thick,->] (0.5,3) -- (0.5,1.5);
    \end{scope} 
    
\begin{scope}[shift={(1,-12)}]
\draw[thick,dashed] (0,3) -- (3,3);
\draw[thick,dashed] (0,0) -- (3,0);
\node at (0.5,3.35) {$+$};
\node at (2.5,3.35) {$-$};
\draw[thick,<-] (0.5,3) .. controls (0.6,0.35) and (2.4,0.35) .. (2.5,3);

\draw[thick,fill] (0.80,2) arc (0:360:1.5mm);
\node at (0.15,1.75) {$a$};
\node at (3.5,1.5) {$=$};
\end{scope}

\begin{scope}[shift={(5,-12)}]
\draw[thick,dashed] (0,3) -- (3,3);
\draw[thick,dashed] (0,0) -- (3,0);
\node at (0.5,3.35) {$+$};
\node at (2.5,3.35) {$-$};
\draw[thick,<-] (0.5,3) .. controls (0.6,0.35) and (2.4,0.35) .. (2.5,3);

\draw[thick,fill] (2.50,2) arc (0:360:1.5mm);
\node at (2.85,1.75) {$a$};
\end{scope}

\begin{scope}[shift={(12,-12)}]
\draw[thick,dashed] (0,3) -- (3,3);
\draw[thick,dashed] (0,0) -- (3,0);
\node at (0.5,-0.35) {$+$};
\node at (2.5,-0.35) {$-$};
\draw[thick,->] (0.5,0) .. controls (0.6,2.65) and (2.4,2.65) .. (2.5,0);

\draw[thick,fill] (0.80,1) arc (0:360:1.5mm);
\node at (0.15,1.25) {$a$};
\node at (3.5,1.5) {$=$};
\end{scope}

\begin{scope}[shift={(16,-12)}]
\draw[thick,dashed] (0,3) -- (3,3);
\draw[thick,dashed] (0,0) -- (3,0);
\node at (0.5,-0.35) {$+$};
\node at (2.5,-0.35) {$-$};
\draw[thick,->] (0.5,0) .. controls (0.6,2.65) and (2.4,2.65) .. (2.5,0);

\draw[thick,fill] (2.50,1) arc (0:360:1.5mm);
\node at (2.85,1.25) {$a$};
\end{scope}

    \end{tikzpicture}
    \caption{Top left: the cap and cup cobordisms with the opposite orientations. The cap and cup cobordisms for the opposite orientation are obtained by composing the original cup and cap in Figure~\ref{figure-1.1a} with the transposition cobordisms, see top right for the cup cobordism for the opposite orientation. Middle row shows isotopy relations on cup and cap cobordisms and the bottom row--compatibility with the dot maps (isotopies of dots across local maxima and minima). }
    \label{figure-1.3}
\end{figure}

\begin{figure}
    \centering
\begin{tikzpicture}[scale=0.6]
\begin{scope}[shift={(0,0)}]
\node at (2,3) {$\omega=a_1\cdots a_n$};
\draw[thick,->] (0,2) -- (2.5,2);
\draw[thick] (2.5,2) -- (4,2);

\draw[thick,fill] (0.65,2) arc (0:360:1.5mm);
\node at (0.5,1.25) {$a_1$};
\draw[thick,fill] (1.65,2) arc (0:360:1.5mm);
\node at (1.5,1.25) {$a_2$};
\node at (2.45,1.25) {$\cdots$};
\draw[thick,fill] (3.65,2) arc (0:360:1.5mm);
\node at (3.50,1.25) {$a_n$};
\node at (4.75,2) {$=$};
\node at (6.65,2) {$\alpha_{\I}(Q_{\init}\omega)$};
\node at (8.55,2) {$=$};
\node at (10.5,2) {$\alpha_{\I,(Q)}(\omega)$};
\end{scope}

\begin{scope}[shift={(14.5,0)}]
\draw[thick,->] (0,2) -- (1,2);
\draw[thick] (1,2) -- (2,2);
\node at (2.75,2) {$=$};
\node at (4.5,2) {$\alpha_{\I}(Q_{\init})$};
\end{scope}

\end{tikzpicture}
    \caption{Left: a decorated interval and its evaluation. Right: an interval with no defects.}
    \label{figure-B1}
\end{figure}

\vspace{0.1in} 

An interval without defects evaluates to $\alphai(Q_{\init})=\alphaiQ(\emptyset)\in \Bool$, where $\emptyset\in \Sigma^{\ast}$ is the empty word. We will write 
\begin{equation}\alphaiQ(\omega):=
\alphai(Q_{\init}\omega),   \hspace{0.75cm}
\alphaiQ:\Sigma^{\ast}\lra \Bool
\end{equation}
for the interval evaluation of words $\omega\in \Sigma^{\ast}$. Evaluation $\alphaiQ(\omega)=1$ if and only if $\omega$ is in the language $L_\I$ accepted by the automaton $(Q)$. 

Our notations contain several versions of the empty set: 
\begin{itemize}
\item $\emptyset\in \Sigma^{\ast}$ is the empty word and the identity of the monoid $\Sigma^{\ast}$. 
\item $\emptyset_0$ is the empty $0$-manifold and the unit (identity) object $\one$ of various monoidal categories of  1-cobordisms. 
\item $\emptyset_1$ is the empty $1$-manifold, which is the identity morphism of the identity object $\one$ of the category of 1-cobordisms. 
\end{itemize}

{\it Circular and strongly circular languages.}
Denote by $\alphacQ(\omega)$ the evaluation of an oriented circle with the circular word $\omega$ written on it.

We view a circular word as an equivalence class of words in $\Sigma^{\ast}$ modulo the equivalence relation $\omega_1\omega_2\sim\omega_2\omega_1$ for $\omega_1,\omega_2\in\Sigma^{\ast}$ and denote the equivalence classes by $\Sigma^{\circ}:=\Sigma^{\ast}/\sim$. 
Evaluation  $\alphacQ(\omega)$ does not depend on the presentation of a $\omega$-decorated circle 
as a concatenation of basis morphisms, for letters in $\omega$. The corresponding evaluation map 
\[
\alphacQ \ : \ \Sigma^{\circ}\lra \Bool 
\]
goes from the set of circular words to $\Bool$. 

Put a circle with a defect circular word $\omega=a_1\cdots a_n$ in a standard position as shown in Figure~\ref{figure-B2}.

\vspace{0.1in}

\begin{figure}
    \centering
\begin{tikzpicture}[scale=0.6]
\begin{scope}[shift={(0,0)}]
\draw[thick,dashed] (0,4) -- (3,4);
\draw[thick,dashed] (0,0) -- (3,0);
\draw[thick,<-] (2.5,2) arc (0:360:1);

\draw[thick,fill] (1.65,3) arc (0:360:1.5mm);
\node at (1.5,3.45) {$a_n$};
\draw[thick,fill] (0.90,1.35) arc (0:360:1.5mm);
\node at (0.25,2.5) {\rotatebox[origin=c]{-25}{$\vdots$}};
\node at (0.25,1) {$a_2$};
\draw[thick,fill] (1.65,1) arc (0:360:1.5mm);
\node at (1.45,0.50) {$a_1$};

\node at (3.75,2) {$=$};
\end{scope}

\begin{scope}[shift={(4.95,0)}]
\draw[thick,dashed] (0,4) -- (3,4);
\draw[thick,dashed] (0,3) -- (3,3);
\draw[thick,dashed] (0,1) -- (3,1);
\draw[thick,dashed] (0,0) -- (3,0);

\draw[thick] (0.5,3) .. controls (0.6,4) and (2.4,4) .. (2.5,3);
\draw[thick] (2.5,1) -- (2.5,2);
\draw[thick,<-] (2.5,2) -- (2.5,3);
\draw[thick] (0.5,1) -- (0.5,3);
\draw[thick] (0.5,1) .. controls (0.6,0) and (2.4,0) .. (2.5,1);

\draw[thick,fill] (0.65,2.70) arc (0:360:1.5mm);
\node at (-0.25,2.75) {$a_n$};
\node at (-0.25,2.25) {$\vdots$};
\draw[thick,fill] (0.65,1.75) arc (0:360:1.5mm);
\node at (-0.25,1.75) {$a_2$};
\draw[thick,fill] (0.65,1.25) arc (0:360:1.5mm);
\node at (-0.25,1.25) {$a_1$};

\node at (4,3.5) {$\ev$};
\node at (4,0.5) {$\coev$};

\node at (5.10,2) {$=$};
\node at (7,1.75) {$\displaystyle{\sum_{q\in Q}} q^*(q\omega)$};
\end{scope}

\end{tikzpicture}
    \caption{Evaluation of an $\omega$-decorated circle, $\omega=a_1\cdots a_n$.}
    \label{figure-B2}
\end{figure}
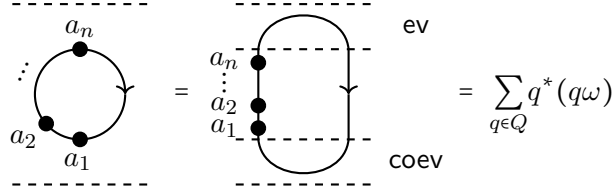

The evaluation of $\omega$ is then 
\begin{equation}\label{eq_circ_def}
\alphacQ(\omega) \ = \ \ev \circ (m_\omega\otimes \id_+)\circ \coev \ = \ \sum_{q\in Q} q^{\ast}(q\omega).  
\end{equation} 
A word $\omega$ evaluates to $1$ via $\alpha_{\circ,(Q)}$ if and only if for some state $q\in Q$ there is a path $\omega$ in the decorated graph $(Q)$ that starts and ends at $q$. Evaluation $\alphacQ$ defines a circular language $\LcQ\subset \Sigma^{\circ}$.

We say that a language $L'\subset \Sigma^{\ast}$ is 
\begin{itemize}
\item \emph{circular} if $\omega_1\omega_2\in L'$ if and only if $\omega_2\omega_1\in L'$ for any $\omega_1\omega_2\in \Sigma^{\ast}$, 
\item \emph{strongly circular} if  
it is circular  and $\omega\in L'$ implies that $\omega^n\in L'$ for all $n\ge 0$, 
\item \emph{cyclic} if it is circular and $\omega\in L'\Leftrightarrow \omega^n\in L'$ for any $n\ge 2,$ see~\cite{BR1,Cart97},
\item a \emph{trace} or  \emph{loop} language if it consists of words that are loops in some automaton $(Q)$. 
\end{itemize}
Setting $n=0$ above, we see that a strongly circular language is either the empty language $\emptyset_L$ or it contains the empty word, since $\emptyset=\omega^0$ for any $\omega\in \Sigma^{\ast}$. 
The language $\{a^n \}$ is an example of a circular but not a strongly circular language. 
The notion of a \emph{cyclic} language is similar but different from that of a strongly circular language. A trace language is strongly circular, see below. 

Likewise, an evaluation $\alpha:\Sigma^{\ast}\lra \Bool$ is called 
\begin{itemize}
\item \emph{circular} if $\alpha(\omega_1\omega_2)=\alpha(\omega_2\omega_1), $ for all $\omega_1\omega_2\in \Sigma^{\ast}$, 
\item \emph{strongly circular} if it is circular and 
 $\alpha(\omega^n)=\alpha(\omega)$ for any word $\omega\in \Sigma^{\ast}$ and $n\ge 0$. 
\end{itemize}

\begin{prop} For any automaton $(Q)$, the trace language $\LcQ$ is a strongly circular regular language. It depends only on the transition function in $(Q)$ and not on the sets of initial and accepting states $Q_{\init},Q_{\t}$. 
\end{prop}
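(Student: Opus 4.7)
The plan is to work from the graph-theoretic characterization stated just before the proposition: by \eqref{eq_circ_def}, $\alphacQ(\omega) = \tr(m_\omega) = 1$ if and only if some state $q\in Q$ admits an $\omega$-labelled closed walk at $q$ in the graph $\Gamma(Q)$. Each of the four properties (circular, strongly circular, regular, independent of $Q_\init$ and $Q_\t$) then becomes a direct statement about closed walks in $\Gamma(Q)$.

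First I would verify circularity. Given a closed $\omega_1\omega_2$-walk at $q$, let $q'$ be the intermediate vertex reached after reading $\omega_1$; rotating the walk starting at $q'$ produces a closed $\omega_2\omega_1$-walk at $q'$, so $\alphacQ(\omega_1\omega_2) = \alphacQ(\omega_2\omega_1)$. Equivalently, this is the cyclic property $\tr(m_{\omega_1}m_{\omega_2}) = \tr(m_{\omega_2}m_{\omega_1})$ of the Boolean trace, which carries over from rings to semirings since its proof uses only commutativity and associativity of addition.

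For strong circularity, suppose $\omega \in \LcQ$, so some state $q$ carries a closed $\omega$-walk. Concatenating this walk with itself $n$ times produces a closed $\omega^n$-walk at $q$, hence $\omega^n \in \LcQ$ for every $n \ge 1$. For $n = 0$ the length-zero walk at any vertex of $Q$ reads the empty word, so $\emptyset \in \LcQ$ whenever $Q$ is nonempty; and if $Q = \emptyset$ then $\LcQ$ is the empty language, which is strongly circular vacuously.

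Finally, regularity and the independence from $Q_\init, Q_\t$ follow simultaneously from the decomposition
\[
\LcQ \ = \ \bigcup_{q\in Q} L(q,q),
\]
where $L(q,q)$ is the language accepted by the NFA $(Q, \delta, \{q\}, \{q\})$ with single initial and single accepting state $q$. Each $L(q,q)$ is regular, regular languages are closed under finite unions, and the right-hand side makes no reference whatsoever to $Q_\init$ or $Q_\t$. I do not expect any serious obstacle in executing this plan; the proof is essentially bookkeeping on closed walks in $\Gamma(Q)$, and the only mild care needed lies in the $n=0$ edge case and in recording that the Boolean trace satisfies the cyclic identity.
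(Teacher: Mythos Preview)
Your proposal is correct and follows essentially the same approach as the paper: both arguments use the decomposition $\LcQ = \bigcup_{q\in Q} L(q,q)$ (the paper writes this as the language of $\sqcup_{q\in Q}(Q)_q$) to obtain regularity and independence from $Q_{\init},Q_{\t}$, and both prove strong circularity by concatenating a closed $\omega$-walk with itself. You are slightly more explicit about circularity and the empty-automaton edge case than the paper, but the substance is the same.
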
 

\begin{proof} It is immediate that circular evaluation  $\alpha_{\circ,Q}$ and the associated circular language $L_{\circ,Q}$ 
is described by a finite system via \eqref{eq_circ_def} and is a regular circular language. In more details, the language $L_{\circ,(Q)}$ picks out circular words for which there is a cycle in $(Q)$. For each state $q\in Q$ we can form the automaton $(Q)_q$ with the states and transition function given by $(Q)$ and $q$ being the only initial and accepting state. Then $L_{\circ,(Q)}$ is the language of the automaton $\sqcup_{q\in Q}(Q)_q$, the disjoint union of automata $(Q)_q$ over all states $q$ in $Q$. 

The empty word is in $\LcQ$  since 
\[  \alphacQ(\emptyset) \  = \ \sum_{q\in Q} q^{\ast}(q) \ = \ \sum_{q\in Q} 1 \ = \ 1\in \Bool. 
\] 
A word $\omega\in \LcQ$ if and only if  $q^{\ast}(q\omega)=1$ for some state $q$, which means that there is a path $\omega$ from $q$ to itself (in general, there may be several paths $\omega$ starting at $q$; the circular evaluation of $\omega$ is $1$ if and only if there is a path that comes back to $q$). 

The $n$-th power of this path will go from $q$ to $q$ as well, so that $\omega^n\in \LcQ$ for any $n\ge 0$, and the language $\LcQ$ is strongly circular. 
\end{proof} 

We see that an automaton $(Q)$ defines a pair of evaluations 
\begin{equation}\alpha_{(Q)}:=(\alphaiQ,\alphacQ),
\end{equation}
the second of which is strongly circular. We may call these \emph{the interval} and \emph{the circular} or \emph{the trace} evaluations of $(Q)$, respectively. These evaluations give rise to a pair of regular languages 
\begin{equation}
L_{(Q)}:=(L_{\I,(Q)},L_{\circ,(Q)}),
\end{equation}
with $L_{\circ,(Q)}$ strongly circular. We can call these languages \emph{the interval} and \emph{the trace} languages of $(Q)$, respectively.
The language $L_{\circ,(Q)}$ may also be called  \emph{the loop language} or \emph{the circular language} of the automaton $(Q)$. 

\vspace{0.07in} 

Note that for the empty automaton $(\emptyset)$ with no states both languages $L_{\I,(\emptyset)}$ and $L_{\circ,(\emptyset)}$ are empty (contain no words), justifying our inclusion of the empty language into the set of strongly circular languages. For any nonempty automaton $(Q)$ its trace language $L_{\circ,(Q)}$ contains the empty word $\emptyset_0$, while its (interval) language $L_{\I,(Q)}$ contains 
the empty word if and only if $Q_{\init}\cap Q_{\t}$ is nonempty.

\vspace{0.1in}

{\it Decomposition of the identity.}
Any TQFT allows for a so-called decomposition of the identity. In the TQFT for the automaton $(Q)$, one can introduce endpoints labelled by $q$ and $q^{\ast}$, over all states $q\in Q$, depending on 
the orientation of the interval near the endpoint, see Figure~\ref{figure-B3}. One can then decompose an arc as the sum over pairs of half-intervals labelled $q$ and $q^{\ast}$, over all $q\in Q$, see Figure~\ref{figure-B3} on the right.  Another skein relation is shown in that figure as well.

\begin{figure}
    \centering
\begin{tikzpicture}[scale=0.6]
\begin{scope}[shift={(0,0)}]
\node at (0.5,3.35) {$+$};
\draw[thick,dashed] (0,3) -- (1,3);
\draw[thick,dashed] (0,0) -- (1,0);
\draw[thick,<-] (0.5,3) -- (0.5,1.75);
\node at (0.6,1.25) {$q^*$};
\end{scope}

\begin{scope}[shift={(4,0)}]
\node at (0.5,3.35) {$+$};
\draw[thick,dashed] (0,3) -- (1,3);
\draw[thick,dashed] (0,0) -- (1,0);
\draw[thick] (0.5,3) -- (0.5,2);
\draw[thick,<-] (0.5,2) -- (0.5,0);
\node at (0.5,-0.35) {$+$};
\node at (1.5,1.5) {$=$};
\node at (2.5,1.25) {$\displaystyle{\sum_{q\in Q}}$};

\begin{scope}[shift={(3.5,0)}]
\node at (0.5,3.35) {$+$};
\draw[thick,dashed] (0,3) -- (1,3);
\draw[thick,dashed] (0,0) -- (1,0);
\draw[thick] (0.5,3) -- (0.5,2);
\draw[thick,<-] (0.5,3) -- (0.5,2);
\node at (1.00,2.1) {$q$};

\draw[thick,<-] (0.5,1) -- (0.5,0);
\node at (1.15,0.9) {$q^{\ast}$};
\node at (0.5,-0.35) {$+$};
\end{scope}
\end{scope}

\begin{scope}[shift={(12,0)}]
\node at (0.5,3.35) {$+$};
\draw[thick,dashed] (0,3) -- (1,3);
\draw[thick,dashed] (0,0) -- (1,0);

\draw[thick,fill] (0.65,2.25) arc (0:360:1.5mm);
\node at (1,2.25) {$a$};

\draw[thick,<-] (0.5,3) -- (0.5,1.5);
\node at (0.5,1.10) {$q$};
\node at (1.75,1.5) {$=$};
\node at (3.25,1.25) {$\displaystyle{\sum_{q'\in \delta(q,a)}}$};

\begin{scope}[shift={(4.45,0)}]
\node at (0.5,3.35) {$+$};
\draw[thick,dashed] (0,3) -- (1,3);
\draw[thick,dashed] (0,0) -- (1,0);

\draw[thick,<-] (0.5,3) -- (0.5,1.5);
\node at (0.55,1.10) {$q'$};
\end{scope}

\end{scope}

\end{tikzpicture}
    \caption{Left: an endpoint decorated by $q$. Middle: decomposition of the identity endomorphism of $+$. Right: action of $a$ on an endpoint labelled $q$.}
    \label{figure-B3}
\end{figure}
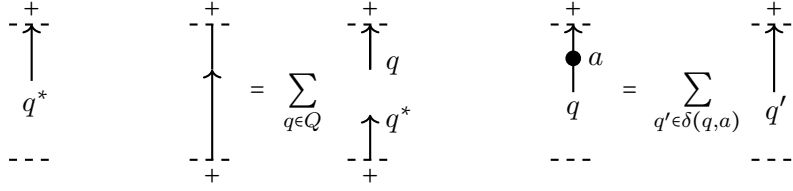


\subsection{Trace languages of automata with a given interval language}\label{subsec_dependence}

Let us fix a regular language $L$ and consider an automaton $(Q)$ with the language or \emph{interval language} $L$:  \[L=L_{\I,(Q)}.
\] 
To $(Q)$ there is also associated a strongly circular language $L_{\circ,(Q)}$, the trace language of $(Q)$. We explain here that there is a large variety of possible trace languages for automata with the fixed interval language $L$. 

\vspace{0.1in} 

Pick an automaton $(Q')$ with no initial or accepting states, so that $L_{\I,(Q')}=\emptyset$ is the empty language. The trace language $L_{\circ,(Q')}$ is a regular strongly circular language. The disjoint union automaton $(Q)\sqcup (Q')$ has the same interval language as $(Q)$ but its trace language is the sum 
\[
L_{\circ,(Q)\sqcup (Q')} = L_{\circ,(Q)} + L_{\circ,(Q')}
\]
of the trace languages for the two automata. Here, we view languages as elements of $\mathcal{P}(\Sigma^{\ast})$, the powerset of the set of words $\Sigma^{\ast}$, which is naturally a $\Bool$-module  under the union of sets. Thus, the sum of languages is defined as the union of languages. 

We see that  the trace language $L_{\circ,(Q)}$ can be beefed up by adding to it the trace language of any nondeterministic finite automaton:  

\begin{prop}\label{prop_add_lang} Given a regular language $L$, if some automaton for $L$ has the trace language $L_{\circ}$, then all languages of the form 
\[
 L_{\circ} + L',
\]
where $L'$ is the trace language of some automaton, are the trace languages of automata with the interval language $L$. 
\end{prop}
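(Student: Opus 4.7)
The plan is to formalize the disjoint-union construction already hinted at in the paragraph preceding the proposition. Fix an automaton $(Q)$ with $L_{\I,(Q)}=L$ and $L_{\circ,(Q)}=L_{\circ}$, and suppose $L' = L_{\circ,(Q')}$ for some automaton $(Q')$. The first step is to kill the interval language of $(Q')$ without disturbing its trace language: form the automaton $(Q'')$ having the same set of states and the same transition function as $(Q')$, but with $Q''_{\init} = Q''_{\t} = \emptyset$. Since no initial state is available, no word can be accepted on an interval, so $L_{\I,(Q'')} = \emptyset$. On the other hand, by the proposition stated just above (that $L_{\circ,(\cdot)}$ depends only on the transition function), we have $L_{\circ,(Q'')} = L_{\circ,(Q')} = L'$.

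Next I form the disjoint-union automaton $(Q)\sqcup (Q'')$, whose state set is $Q\sqcup Q''$, whose transition function is the obvious one (no transitions between the two components), and whose initial and accepting sets are $Q_{\init}\sqcup \emptyset$ and $Q_{\t}\sqcup \emptyset$. The two routine verifications are then:
\begin{itemize}
\item \emph{Interval language.} A word $\omega$ lies in $L_{\I,(Q)\sqcup(Q'')}$ iff some path labelled $\omega$ connects an initial state to an accepting state. Since there are no initial or accepting states in the $Q''$-component, every such path lies entirely in $(Q)$; hence $L_{\I,(Q)\sqcup(Q'')}=L_{\I,(Q)}=L$.
\item \emph{Trace language.} A circular word $\omega$ lies in $L_{\circ,(Q)\sqcup(Q'')}$ iff there is a state $q\in Q\sqcup Q''$ admitting a cycle labelled $\omega$ through itself. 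Because there are no transitions between the two components, such a cycle lies entirely in one of them; hence $L_{\circ,(Q)\sqcup(Q'')}=L_{\circ,(Q)}\cup L_{\circ,(Q'')}=L_{\circ}+L'$.
\end{itemize}

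Both verifications are entirely mechanical; no actual obstacle arises, since the trace evaluation \eqref{eq_circ_def} is a sum over states and the interval evaluation is a sum over paths from $Q_{\init}$ to $Q_{\t}$, and both decompose cleanly under disjoint union. Thus $(Q)\sqcup(Q'')$ is the desired automaton, establishing the proposition. The only subtle point worth highlighting in the write-up is the initial observation that emptying the initial and accepting state sets of $(Q')$ preserves its trace language, which is precisely what allows the trace summand $L'$ to be added independently of $L$.
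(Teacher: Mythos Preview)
Your proof is correct and follows exactly the approach the paper sketches in the paragraph preceding the proposition: strip the initial and accepting states from an automaton realizing $L'$ (invoking the fact that the trace language depends only on the transition function), then take the disjoint union with $(Q)$. The only cosmetic difference is that the paper phrases the first step as ``pick an automaton $(Q')$ with no initial or accepting states'' rather than explicitly describing the stripping operation.
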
 

Note that the sum of two trace languages (respectively, of two strongly circular languages) is a trace language (respectively, a strongly circular language). 

\vspace{0.07in} 

Let $(Q')$ be an automaton as above, with no initial or accepting states, and $Q''\subset Q'$ a subset of the states of $Q'$. Define the language $L''$ to consist of words $\omega$ such that there is a circular path $\omega$ in $(Q')$ that passes through a vertex of $Q''$. The language $L''$ is strongly circular. Figure~\ref{figure-H1} shows an example, with a 2-state automaton $(Q')$. 

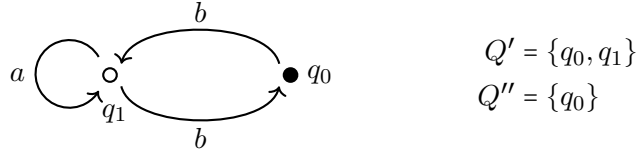
\begin{figure}
    \centering
\begin{tikzpicture}[scale=0.6]
\begin{scope}[shift={(0,0)}]

\draw[thick] (0.15,2) arc (0:360:1.5mm);
\draw[thick,fill] (4.15,2) arc (0:360:1.5mm);

\draw[thick,<-] (0.25,2.25) .. controls (0.5,3.25) and (3.5,3.25) .. (3.75,2.25);

\draw[thick,->] (0.25,1.75) .. controls (0.5,0.75) and (3.5,0.75) .. (3.75,1.75);

\draw[thick,->] (-0.25,2.40) arc (30:330:0.75);

\node at (2,3.40) {$b$};
\node at (2,0.60) {$b$};

\node at (-2.05,2) {$a$};

\node at (0.1,1.15) {$q_1$};

\node at (4.65,2) {$q_0$};

\node at (10,2.5) {$Q'=\{ q_0,q_1\}$};
\node at (9.5,1.5) {$Q'' = \{ q_0\}$};

\end{scope}

\end{tikzpicture}
    \caption{Two-state automaton $(Q')$ with $Q''=\{q_0\}\subset Q'=\{ q_0,q_1\}$ and language $L''$ in \eqref{eq_language_L2} of circular words that pass through $q_0$.}
    \label{figure-H1}
\end{figure}

\vspace{0.1in}

Take $Q''=\{q_0\} \subset Q'=\{q_0,q_1\}$ in that example. The language 
\begin{equation}\label{eq_language_L2} 
L'' \ = \ 
(ba^{\ast}b)^{\ast} + (a^{\ast}+b^2)^{\ast}b^2 (a^{\ast}+b^2)^{\ast}
\end{equation} 
of circular paths that pass through $q_0$ is strongly circular. 

Language $L''$ is not the trace language of any automaton. Indeed, suppose it is the trace language of an automaton $(Q_1)$. Notice that words in $L''$ contain subwords $a^n$ for all $n$. This implies that there exists $m\ge 1$ and a state $q\in Q_1$ with a circular path $a^m$ from $q$ to $q$. Then the trace language of $(Q_1)$ contains $a^m$. This is a contradiction with \eqref{eq_language_L2} or with Figure~\ref{figure-H1}, since any nonempty word in $L''$ contains the letter $b$.  

\begin{corollary}
    Not every strongly circular language is the trace language of an automaton. 
\end{corollary}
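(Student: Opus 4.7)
The plan is to use exactly the example $L''$ constructed in the preceding paragraphs as a witness, so the proof amounts to repackaging that example and verifying the two required properties cleanly.

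First, I would recall the construction: take the $2$-state automaton $(Q')$ in Figure~\ref{figure-H1} with $Q'=\{q_0,q_1\}$, loop $a$ at $q_1$, and edges labelled $b$ in both directions between $q_0$ and $q_1$. Declare $Q''=\{q_0\}$ and let $L''$ be the set of words $\omega$ admitting a closed path of label $\omega$ in $(Q')$ passing through $q_0$, as displayed in \eqref{eq_language_L2}. Then I would verify strong circularity: $L''$ is closed under cyclic rotation because a closed path passing through $q_0$ based at one vertex can be re-based at $q_0$, giving a cyclic rotation of $\omega$ that is still a closed path through $q_0$; and $L''$ contains $\omega^n$ for all $n\ge 0$ whenever $\omega\in L''$, since concatenating a closed $\omega$-path through $q_0$ with itself $n$ times yields a closed $\omega^n$-path through $q_0$ (and the empty word is the closed path of length zero at $q_0$).

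Next I would argue that $L''$ is not the trace language of any automaton. Suppose for contradiction that $L'' = L_{\circ,(Q_1)}$ for some NFA $(Q_1)$. From \eqref{eq_language_L2}, $L''$ contains words of the form $b a^n b$ for every $n\ge 0$, so in particular $L''$ contains arbitrarily long subwords $a^n$. Running the $a$-action on the finite state set $Q_1$, a pigeonhole argument yields a state $q\in Q_1$ and an integer $m\ge 1$ such that there is a closed path of label $a^m$ at $q$ in $(Q_1)$; this forces $a^m\in L_{\circ,(Q_1)} = L''$. But every nonempty word in $L''$ contains the letter $b$ (inspection of \eqref{eq_language_L2}, or of Figure~\ref{figure-H1}), contradicting $a^m\in L''$.

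The only step that requires minor care is producing the closed $a^m$-path in $(Q_1)$ from the presence of long $a$-subwords in trace words: one should spell out that a closed path $\omega$ through some state $q_1\in Q_1$ containing a long $a$-block means the $a$-transition graph has a cycle, giving the required $q$ and $m$. Everything else is just reading off properties of the regular expression \eqref{eq_language_L2}, so the corollary follows.
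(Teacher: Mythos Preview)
Your proposal is correct and follows exactly the paper's approach: exhibit the language $L''$ from \eqref{eq_language_L2} and derive a contradiction via pigeonhole from the presence of arbitrarily long $a$-blocks in words of $L''$ versus the fact that every nonempty word of $L''$ contains $b$. One small wording fix in your circularity check: to show $\omega_2\omega_1\in L''$ you should re-base the closed path at the vertex reached after reading $\omega_1$ (not specifically at $q_0$), since that is what produces the rotation $\omega_2\omega_1$, and the re-based path still visits the same set of vertices, hence still passes through $q_0$.
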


It is a natural question whether any strongly circular language is the language $L''$ associated to an automaton $(Q')$ and a subset $Q''\subset Q'$ of its states. Language $L''$ consists of words realizable as circular paths in $(Q')$ that go through a state in $Q''$.

\begin{prop} Any regular strongly circular one-letter language is the trace language of some automaton. 
\end{prop}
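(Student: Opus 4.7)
The plan is to identify a one-letter language $L \subseteq \{a\}^*$ with the subset $S := \{n \in \mathbb{N} : a^n \in L\}$. Under this identification, regularity of $L$ becomes ultimate periodicity of $S$, and strong circularity (circularity being automatic over one letter) becomes closure of $S$ under scalar multiplication, i.e.\ $m \in S \Rightarrow km \in S$ for all $k \geq 0$; in particular, either $S = \emptyset$ or $0 \in S$. The trace language of an NFA depends only on its underlying directed graph $(Q,\delta)$ and is additive under disjoint unions, so it suffices to construct such a graph whose set of closed-walk lengths equals $S$ (one then sets $Q_{\init} = Q_{\t} = \emptyset$). The trivial cases $S = \emptyset$ and $S = \{0\}$ are handled by the empty automaton and a single isolated state, respectively; henceforth assume $S$ is infinite.

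For the infinite case, the strategy is first to decompose $S$ as a finite union of finitely generated submonoids of $(\mathbb{N},+)$, and then to realize each summand $\mathcal{M} = \langle \ell_1, \ldots, \ell_k\rangle$ as the trace language of a \emph{bouquet} automaton $B(\mathcal{M})$: one base vertex $q_*$ together with $k$ otherwise disjoint oriented $a$-cycles of lengths $\ell_1, \ldots, \ell_k$ glued at $q_*$. Since the cycles meet only at $q_*$, any closed walk from $q_*$ decomposes uniquely as a sequence of full cycle traversals, so the set of closed-walk lengths at $q_*$ is exactly $\{c_1 \ell_1 + \cdots + c_k \ell_k : c_i \geq 0\} = \mathcal{M}$, and closed walks from interior vertices of the cycles contribute only lengths already in $\mathcal{M}$. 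The desired automaton is then the disjoint union of the bouquets corresponding to the summands in the decomposition.

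The decomposition rests on the following structural fact. Fix $N, p \geq 1$ witnessing ultimate periodicity of $S$, and let $R \subseteq \mathbb{Z}/p\mathbb{Z}$ be the set of residues of $S \cap [N,\infty)$. Scalar-multiplication closure forces $R$ to be a union of cyclic subgroups: if $r \in R$ is witnessed by some $n \in S$ with $n \geq N$ and $n \equiv r \pmod p$, then all multiples $kn$ (with $k \geq 1$) lie in $S$, and for $k$ large $kn \geq N$ with $kn \equiv kr \pmod p$, so $kr \bmod p \in R$; as $k$ varies this exhausts the subgroup $\langle r \rangle \subseteq \mathbb{Z}/p\mathbb{Z}$. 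Hence $R = \bigcup_{d \in D} d\,\mathbb{Z}/p\mathbb{Z}$ for some finite set $D$ of divisors of $p$; equivalently, for $n \geq N$ one has $n \in S$ iff $d \mid n$ for some $d \in D$. For each $d \in D$ set $k_d := \lceil N/d \rceil$ and define
\[
\mathcal{M}_d \ := \ \bigl\langle\, k_d\, d,\; (k_d+1)\,d,\; \ldots,\; 2 k_d\, d \,\bigr\rangle;
\]
since $k_d+1$ consecutive integers starting at $k_d$ generate $\{0\} \cup \mathbb{N}_{\geq k_d}$ by an elementary induction, $\mathcal{M}_d = \{0\} \cup (d\mathbb{N} \cap [N, \infty))$, and each generator is a multiple of $d$ of size at least $N$, hence lies in $S$. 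Combined with the cyclic submonoids $\langle m\rangle = m\mathbb{N} \subseteq S$ for each $m \in S \cap [1,N)$, this yields
\[
S \ = \ \bigcup_{d \in D} \mathcal{M}_d \ \cup\ \bigcup_{m \in S \cap [1,\,N)} \langle m \rangle,
\]
a finite union of finitely generated submonoids, completing the decomposition.

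The main obstacle is the structural step forcing $R$ to be a union of cyclic subgroups of $\mathbb{Z}/p\mathbb{Z}$, which is precisely where the scalar-multiplication hypothesis is essentially used; once this is established, the rest is routine numerical-semigroup bookkeeping plus the direct verification that a bouquet realizes its associated submonoid as its trace.
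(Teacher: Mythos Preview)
Your proof is correct and follows the same overall strategy as the paper---realize $S$ as a finite union of numerical semigroups, each of which is the trace monoid of a bouquet (``flower'') automaton, and take the disjoint union---but your structural analysis of the periodic tail is more careful. The paper's sketch asserts that strong circularity forces $S\cap[N,\infty)$ to be a single arithmetic progression $\{N,N+n,N+2n,\dots\}$, handled by one flower; this is not true in general, as $S=2\mathbb{N}\cup 3\mathbb{N}$ shows (tail residues $\{0,2,3,4\}$ modulo~$6$). You correctly prove instead that the residue set modulo the period is a union of cyclic subgroups of $\mathbb{Z}/p\mathbb{Z}$, so the tail decomposes as $\bigcup_{d\in D}\bigl(d\mathbb{N}\cap[N,\infty)\bigr)$ over a finite set $D$ of divisors of $p$, and you then exhibit each piece explicitly as a finitely generated numerical semigroup $\mathcal{M}_d$ realized by its own bouquet. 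Your argument is thus a repaired and fully detailed version of the paper's sketch.
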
 

\begin{proof} Let $L\subset a^{\ast}$ be a  regular strongly circular one-letter language. Then $L$ is eventually periodic  \cite{matos1994periodic}, so that for some $N$ and $k\ge 1$ we have $a^m \in L \leftrightarrow a^{m+k}\in L, m\ge N$. Let $j_1,\dots, j_r$ be exponents of words in $L$ that are less than $N$. Since $L$ is strongly circular, with any word it contains all its powers. This implies that $k=mn$ for some $n,m$ such that $L\cap a^Na^{\ast}= a^N(a^n)^{\ast}$. 

We can now realize $L$ as the trace language of the automaton $(Q)$ which is the disjoint union of oriented loop automata of lengths $j_1,\dots, j_r$ and a \emph{flower} automaton which is the one-vertex union of oriented loops of lengths $N,N+n,\dots, N+(m-1)n$. 
\end{proof}

\begin{remark} Let us also refer the reader to the related notion of a \emph{strongly cyclic} language in~\cite{BCR96,Cart97}.  
\end{remark}

{\it Coverings of automata.}
An automaton $(Q)$ can be viewed as a decorated oriented graph, possibly with loops and multiple edges. Viewing the underlying graph as a topological space $Y=Y_{(Q)}$, pick a finite locally-trivial covering $p:Z\lra Y$ (in particular, $p$ is surjective). Topological space $Z$ can be viewed as a graph. We lift all decorations from $Y$ to $Z$ to turn it into an automaton. Namely, let $Q':= p^{-1}(Q)$ be the set of vertices of $Z$. Define sets of initial and accepting vertices of $(Q')$ by $Q'_{\init}:=p^{-1}(Q_{\init})$, $Q'_{\t}:= p^{-1}(Q_{\t})$, i.e., sets of initial and accepting vertices of $(Q')$ are the inverse image under $p$ of sets of initial and accepting vertices of $(Q)$.  Edges of $Z$ are oriented to match orientation with edges of $Y$, so that $p$ applied to any edge preserves its orientation. Labels on edges of $Z$ must match those of $Y$ under the map $p$ as well. 

Two examples of automata and their covering automata are shown in Figures~\ref{figure-F1} and~\ref{figure-F2}. In both examples graphs $Y$ underlying automata $(Q)$ are connected and coverings have degree three. 
 
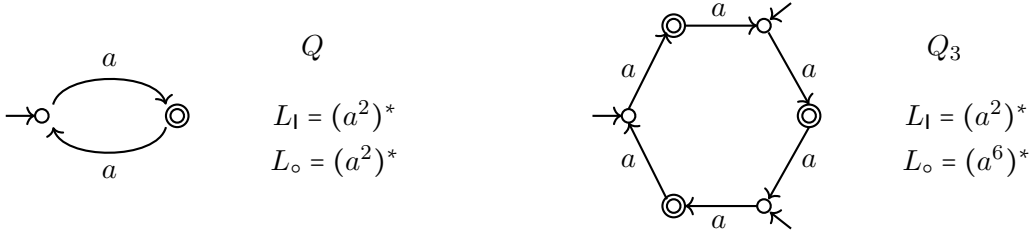
\begin{figure}
    \centering
\begin{tikzpicture}[scale=0.6]
\begin{scope}[shift={(0,0)}]

\draw[thick,->] (-0.80,2) -- (-0.15,2);

\draw[thick] (0.15,2) arc (0:360:1.5mm);
\draw[thick] (3.15,2) arc (0:360:1.5mm);
\draw[thick] (3.25,2) arc (0:360:2.5mm);

\draw[thick,->] (0.25,2.25) .. controls (0.45,3) and (2.55,3) .. (2.75,2.25);
\draw[thick,<-] (0.25,1.75) .. controls (0.45,1) and (2.55,1) .. (2.75,1.75);

\node at (1.5,3.25) {$a$};
\node at (1.5,0.75) {$a$};

\node at (6,3.5) {$Q$};

\node at (6.5,2) {$L_{\I} = (a^2)^*$};
\node at (6.5,1) {$L_{\circ} = (a^2)^*$};
\end{scope}

\begin{scope}[shift={(13,0)}]

\draw[thick,->] (-0.80,2) -- (-0.15,2);
\draw[thick,->] (3.6,4.5) -- (3.13,4.13);
\draw[thick,->] (3.6,-0.5) -- (3.13,-0.13);

\draw[thick] (0.15,2) arc (0:360:1.5mm);

\draw[thick] (1.15,4) arc (0:360:1.5mm);
\draw[thick] (1.25,4) arc (0:360:2.5mm);

\draw[thick] (3.15,4) arc (0:360:1.5mm);

\draw[thick] (4.15,2) arc (0:360:1.5mm);
\draw[thick] (4.25,2) arc (0:360:2.5mm);

\draw[thick] (3.15,0) arc (0:360:1.5mm);

\draw[thick] (1.15,0) arc (0:360:1.5mm);
\draw[thick] (1.25,0) arc (0:360:2.5mm);

\draw[thick,->] (0,2.15) -- (0.85,3.85);
\draw[thick,->] (1.25,4) -- (2.85,4);

\draw[thick,->] (3.10,3.85) -- (4,2.25);
\draw[thick,<-] (3.10,0.15) -- (4,1.75);

\draw[thick,<-] (1.25,0) -- (2.85,0);
\draw[thick,<-] (0,1.85) -- (0.85,0.15);

\node at (0,3) {$a$};
\node at (2,4.35) {$a$};
\node at (4,3) {$a$};
\node at (4,1) {$a$};
\node at (2,-0.35) {$a$};
\node at (0,1) {$a$};

\node at (7,3.5) {$Q_3$};

\node at (7.5,2) {$L_{\I}=(a^2)^*$};
\node at (7.5,1) {$L_{\circ}=(a^6)^*$};

\end{scope}

\end{tikzpicture}
    \caption{An automaton $(Q)$ for the language $(a^2)^{\ast}$ and its covering automaton $(Q_3)$. The two automata share the interval language but have different trace languages.}
    \label{figure-F1}
\end{figure}

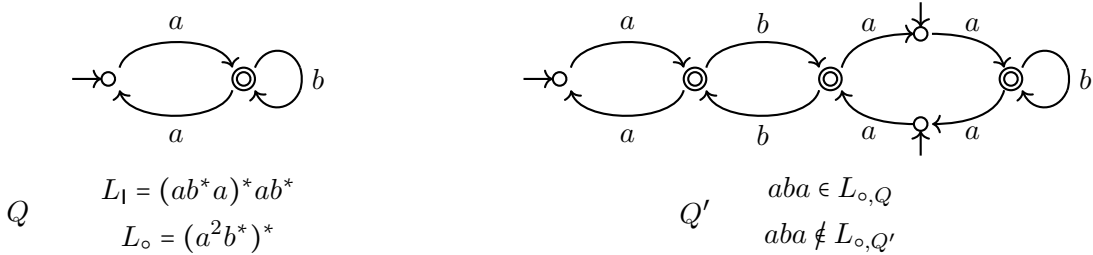
\begin{figure}
    \centering
\begin{tikzpicture}[scale=0.6]
\begin{scope}[shift={(0,0)}]
\draw[thick,->] (-0.80,2) -- (-0.15,2);

\draw[thick] (0.15,2) arc (0:360:1.5mm);
\draw[thick] (3.15,2) arc (0:360:1.5mm);
\draw[thick] (3.25,2) arc (0:360:2.5mm);

\draw[thick,->] (0.25,2.25) .. controls (0.45,3) and (2.55,3) .. (2.75,2.25);
\draw[thick,<-] (0.25,1.75) .. controls (0.45,1) and (2.55,1) .. (2.75,1.75);

\draw[thick] (3.25,2.25) .. controls (3.45,2.75) and (4.05,2.75) .. (4.25,2.25);
\draw[thick] (4.25,2.25) .. controls (4.30,2.10) and (4.30,1.90) .. (4.25,1.75);
\draw[thick,<-] (3.25,1.75) .. controls (3.45,1.25) and (4.05,1.25) .. (4.25,1.75);
\node at (4.65,2) {$b$};

\node at (1.5,3.25) {$a$};
\node at (1.5,0.75) {$a$};

\node at (-2,-1) {$Q$};

\node at (2.0,-0.5) {$L_{\I} = (ab^*a)^*ab^*$};
\node at (2.0,-1.5) {$L_{\circ} = (a^2b^*)^*$};
\end{scope}

\begin{scope}[shift={(10,0)}]
\draw[thick,->] (-0.80,2) -- (-0.15,2);

\draw[thick] (0.15,2) arc (0:360:1.5mm);
\draw[thick] (3.15,2) arc (0:360:1.5mm);
\draw[thick] (3.25,2) arc (0:360:2.5mm);

\draw[thick,->] (0.25,2.25) .. controls (0.45,3) and (2.55,3) .. (2.75,2.25);
\draw[thick,<-] (0.25,1.75) .. controls (0.45,1) and (2.55,1) .. (2.75,1.75);

\node at (1.5,3.25) {$a$};
\node at (1.5,0.75) {$a$};

\draw[thick,->] (3.25,2.25) .. controls (3.45,3) and (5.55,3) .. (5.75,2.25);
\draw[thick,<-] (3.25,1.75) .. controls (3.45,1) and (5.55,1) .. (5.75,1.75);

\node at (4.5,3.25) {$b$};
\node at (4.5,0.75) {$b$};

\draw[thick] (6.15,2) arc (0:360:1.5mm);
\draw[thick] (6.25,2) arc (0:360:2.5mm);

\draw[thick,->] (6.25,2.25) .. controls (6.45,3) and (7.65,3) ..  (7.85,3);
\draw[thick,<-] (6.25,1.75) .. controls (6.45,1) and (7.65,1) ..  (7.85,1);

\node at (6.85,3.20) {$a$};
\node at (6.85,0.80) {$a$};

\draw[thick] (8.15,3) arc (0:360:1.5mm);
\draw[thick] (8.15,1) arc (0:360:1.5mm);

\draw[thick,->] (8,3.70) -- (8,3.15);
\draw[thick,->] (8,0.30) -- (8,0.85);

\draw[thick,->] (8.25,3) .. controls (8.45,3) and (9.65,3) ..  (9.85,2.25);
\draw[thick,<-] (8.25,1) .. controls (8.45,1) and (9.65,1) ..  (9.85,1.75);

\node at (9.15,3.20) {$a$};
\node at (9.15,0.80) {$a$};

\draw[thick] (10.15,2) arc (0:360:1.5mm);
\draw[thick] (10.25,2) arc (0:360:2.5mm);

\draw[thick] (10.25,2.25) .. controls (10.45,2.75) and (11.05,2.75) .. (11.25,2.25);
\draw[thick] (11.25,2.25) .. controls (11.30,2.10) and (11.30,1.90) .. (11.25,1.75);
\draw[thick,<-] (10.25,1.75) .. controls (10.45,1.25) and (11.05,1.25) .. (11.25,1.75);
\node at (11.65,2) {$b$};

\node at (3,-1) {$Q'$};

\node at (6.0,-0.5) {$aba\in L_{\circ, Q}$};
\node at (6.0,-1.5) {$aba\not\in L_{\circ,Q'}$};
\end{scope}

\end{tikzpicture}
    \caption{An automaton $(Q)$ for the language  $L_{\I} = (ab^*a)^*ab^*$ with the trace language  $L_{\circ} = (a^2b^*)^*$ and its covering automaton $(Q')$ with the same interval language $L_{\I}$ but a different trace language.}
    \label{figure-F2}
\end{figure}

\begin{prop} An automaton $(Q)$ and its covering automaton $(Q')$ have the same interval language, while the trace language of $(Q')$ is a subset of the trace language of $(Q)$: 
\[
 L_{\I,(Q')} \ = \ L_{\I,(Q)}, \ \ \  L_{\I,(Q')} \ \subset \ L_{\I,(Q)}.
\]
\end{prop}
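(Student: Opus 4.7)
The plan is to use the fundamental property of coverings of directed graphs, namely unique path lifting given a starting vertex in the fiber. Since decorations (letter labels and edge orientations) are pulled back from $(Q)$ to $(Q')$ via $p$, a directed path labelled by $\omega$ in one automaton projects to or lifts to a directed path labelled by the same word $\omega$ in the other; this is the single mechanism that drives both claims.

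For the interval equality $L_{\I,(Q')} = L_{\I,(Q)}$, I would argue each inclusion separately. For $\subseteq$, take $\omega \in L_{\I,(Q')}$ witnessed by a path from some $q'_0 \in Q'_{\init}$ to some $q'_1 \in Q'_{\t}$. The projection $p$ sends this to a path in $(Q)$ reading $\omega$, starting at $p(q'_0) \in Q_{\init}$ and ending at $p(q'_1) \in Q_{\t}$ (using $Q'_{\init} = p^{-1}(Q_{\init})$, $Q'_{\t} = p^{-1}(Q_{\t})$), so $\omega \in L_{\I,(Q)}$. For $\supseteq$, given a witnessing path in $(Q)$ from $q_0 \in Q_{\init}$ to $q_1 \in Q_{\t}$ reading $\omega$, pick any $q'_0 \in p^{-1}(q_0)$, which lies in $Q'_{\init}$ by definition, and apply path lifting along the sequence of directed edges. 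The resulting path in $(Q')$ reads $\omega$ and its terminal vertex lies in $p^{-1}(q_1) \subset Q'_{\t}$, so $\omega \in L_{\I,(Q')}$.

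For the trace containment $L_{\circ,(Q')} \subset L_{\circ,(Q)}$, take $\omega \in L_{\circ,(Q')}$ and a closed loop in $(Q')$ at some vertex $q'$ reading $\omega$. Its image under $p$ is a loop in $(Q)$ at $p(q')$ reading $\omega$, hence $\omega \in L_{\circ,(Q)}$. The reverse inclusion will generally fail because a closed loop downstairs may lift to an open path upstairs whose endpoints lie in distinct vertices of the fiber: this is exactly the phenomenon visible in Figure~\ref{figure-F1}, where the trace language $(a^2)^{\ast}$ of $(Q)$ is strictly larger than the trace language $(a^6)^{\ast}$ of its threefold cover $(Q_3)$.

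No serious obstacle is anticipated; the only point requiring care is the bookkeeping that $p^{-1}$ behaves correctly on the distinguished subsets $Q_{\init}$ and $Q_{\t}$, which is ensured by the defining conditions $Q'_{\init} = p^{-1}(Q_{\init})$ and $Q'_{\t} = p^{-1}(Q_{\t})$. Note that the proposition as stated contains a typographical slip: the second displayed inclusion should read $L_{\circ,(Q')} \subset L_{\circ,(Q)}$, not $L_{\I,(Q')} \subset L_{\I,(Q)}$, which is already an equality.
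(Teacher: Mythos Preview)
Your proof is correct and follows essentially the same approach as the paper's own argument: both use projection of paths under $p$ for one direction and path lifting for the other, with the trace inclusion coming from projecting closed loops and the failure of the reverse inclusion explained by loops lifting to non-closed paths. Your observation about the typographical slip in the displayed inclusion is also correct; the paper repeats this slip verbatim in the later proposition on weak coverings.
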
 
\begin{proof}
Note that any accepting path in $(Q')$ with a word $\omega$ projects to an accepting path in $(Q)$ carrying the same word, and any lifting of a path with a word $\omega$ in $(Q)$ gives a path in $(Q')$ with the same word, implying the equality of interval languages. A circular path in $(Q')$ projects to a circular path in $(Q)$, preserving the word, while a circular path in $(Q)$ does not always lift to a circular path in $(Q')$. 
\end{proof} 

\begin{example} Graph $Y$ underlying the automaton $(Q)$ in Figure~\ref{figure-F1} is a cycle and its connected coverings are parameterized by the degree $n$ of the covering. Denote by $(Q_n)$ the corresponding automaton. Then $(Q_1)=(Q)$ and $(Q_3)$ is also shown in Figure~\ref{figure-F1}. The interval and circular languages for $(Q)$ and $(Q_n)$ are 
\[
 L_{\I,(Q)} = L_{\I,(Q_n)} = (a^2)^*, \ \ 
 L_{\circ,(Q)} = (a^2)^{\ast}, \ \ L_{\circ,(Q_n)} = (a^{2n})^{\ast}. 
\]
In particular, as $n$ becomes large, the only short length word in the circular language for $(Q_n)$ is the empty word $\emptyset$. 
\end{example} 

Given an automaton $(Q)$ with the interval language $L$, assume that $(Q)$ has at least one oriented cycle, so that $L_{\circ,(Q)}$ contains a nonempty word. Arrange states of $(Q)$ around a circle and draw arrows $q\stackrel{a}{\lra}q'$, $a\in \Sigma$ so that they all go clockwise around the circle, at most one full rotation each. In particular, an arrow $q\stackrel{a}{\lra}q$ from a state to itself will make a full rotation around the circle. An example of such arrangement for the automaton $(Q)$ in Figure~\ref{figure-F2} is shown in Figure~\ref{figure-F3}. 

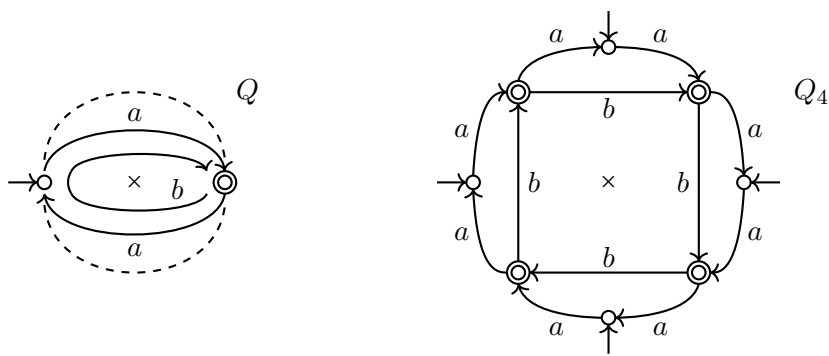
\begin{figure}
    \centering
\begin{tikzpicture}[scale=0.6]
\begin{scope}[shift={(0,0)}]
\draw[thick,->] (-0.80,2) -- (-0.15,2);

\draw[thick] (0.15,2) arc (0:360:1.5mm);
\draw[thick] (4.15,2) arc (0:360:1.5mm);
\draw[thick] (4.25,2) arc (0:360:2.5mm);

\draw[thick,->] (0,2.25) .. controls (0.2,3.45) and (3.8,3.45) .. (4,2.25);
\draw[thick,<-] (0,1.75) .. controls (0.2,0.55) and (3.8,0.55) .. (4,1.75);

\begin{scope}[shift={(-0.15,0)}]
\draw[thick,<-] (3.75,2.25) .. controls (3.55,2.75) and (0.95,2.75) .. (0.75,2.25);
\draw[thick] (0.75,2.25) .. controls (0.65,2.10) and (0.65,1.90) .. (0.75,1.75);
\draw[thick] (3.75,1.75) .. controls (3.55,1.25) and (0.95,1.25) .. (0.75,1.75);
\node at (3.10,1.85) {$b$};
\node at (2.15,2) {$\times$};
\end{scope}
 
\node at (2.0,3.50) {$a$};
\node at (2.0,0.50) {$a$};

\node at (4.5,4) {$Q$};

\draw[thick,dashed] (0,2.5) .. controls (0.2,4.5) and (3.8,4.5) .. (4,2.5);

\draw[thick,dashed] (0,1.5) .. controls (0.2,-0.5) and (3.8,-0.5) .. (4,1.5);
\end{scope}

\begin{scope}[shift={(10.5,0)}]

\draw[thick] (0.15,0) arc (0:360:1.5mm);
\draw[thick] (0.25,0) arc (0:360:2.5mm);

\draw[thick] (4.15,0) arc (0:360:1.5mm);
\draw[thick] (4.25,0) arc (0:360:2.5mm);

\draw[thick] (0.15,4) arc (0:360:1.5mm);
\draw[thick] (0.25,4) arc (0:360:2.5mm);

\draw[thick] (4.15,4) arc (0:360:1.5mm);
\draw[thick] (4.25,4) arc (0:360:2.5mm);

\draw[thick,->] (0,0.25) -- (0,3.75);
\draw[thick,<-] (0.25,0) -- (3.75,0);
\draw[thick,->] (0.25,4) -- (3.75,4);
\draw[thick,<-] (4,0.25) -- (4,3.75);

\node at (0.35,2) {$b$};
\node at (2,3.65) {$b$};
\node at (3.65,2) {$b$};
\node at (2,0.35) {$b$};

\node at (2,2) {$\times$};

\draw[thick] (-0.85,2) arc (0:360:1.5mm);
\draw[thick] (5.15,2) arc (0:360:1.5mm);
\draw[thick] (2.15,5) arc (0:360:1.5mm);
\draw[thick] (2.15,-1) arc (0:360:1.5mm);

\draw[thick,->] (-1.80,2) -- (-1.15,2);
\draw[thick,<-] (5.15,2) -- (5.80,2);

\draw[thick,<-] (2,5.15) -- (2,5.80);
\draw[thick,<-] (2,-1.15) -- (2,-1.80);

\node at (6.5,4) {$Q_4$};

\draw[thick,->] (-1,2.15) .. controls (-0.9,4) and (-0.45,4) .. (-0.25,4);
\draw[thick,<-] (-1,1.85) .. controls (-0.9,0) and (-0.45,0) .. (-0.25,0);

\draw[thick,<-] (5,2.15) .. controls (4.9,4) and (4.45,4) .. (4.25,4);
\draw[thick,->] (5,1.85) .. controls (4.9,0) and (4.45,0) .. (4.25,0);

\draw[thick,->] (0,4.25) .. controls (0.2,5) and (1.75,5) .. (1.85,5);
\draw[thick,<-] (0,-0.25) .. controls (0.2,-1) and (1.75,-1) .. (1.85,-1);

\draw[thick,<-] (4,4.25) .. controls (3.8,5) and (2.25,5) .. (2.15,5);
\draw[thick,->] (4,-0.25) .. controls (3.8,-1) and (2.25,-1) .. (2.15,-1);

\node at (0.85,5.25) {$a$};
\node at (3.15,5.25) {$a$};

\node at (0.85,-1.25) {$a$};
\node at (3.15,-1.25) {$a$};

\node at (-1.25,3.15) {$a$};
\node at (-1.25,0.85) {$a$};

\node at (5.25,3.15) {$a$};
\node at (5.25,0.85) {$a$};
\end{scope}

\end{tikzpicture}    
\caption{Left: automaton $(Q)$ with states arranged along a circle and edges going clockwise around the circle. Right: a 4-fold cyclic cover automaton $(Q_4)$ of that arrangement.}
    \label{figure-F3}
\end{figure}
 
\vspace{0.1in} 

Now for each $n\ge 1$ we can form the ``cyclic'' cover $(Q_n)$ of $(Q)$ by taking the cyclic $n$-cover of the circle and extending to a cover of the automaton $(Q)$. The resulting automaton $(Q_n)$ has as $n$ times as many states and edges as $(Q)$, with $(Q_1)=(Q)$. An example is shown in Figure~\ref{figure-F3} on the right. The following observation holds.

\begin{prop} Automata $(Q_n)$ all have the same interval language $L$. The trace language $L_{\circ,(Q_n)}$ for the automaton $(Q_n)$ does not contain any words of length less than $n$ other than the empty word $\emptyset$. The trace language $L_{\circ,(Q_n)}$  is infinite for each $n\ge 1$. 
\end{prop}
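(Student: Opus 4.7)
The plan is to handle the three assertions in order, exploiting the cyclic covering structure.

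For the first claim, the automaton $(Q_n)$ is by construction a locally trivial covering of $(Q)$, so the preceding proposition on coverings immediately gives $L_{\I,(Q_n)} = L_{\I,(Q)} = L$. Nothing further is required.

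For the second claim I would introduce a \emph{winding number} bookkeeping. Since the states of $(Q)$ are arranged around a circle and each edge traverses strictly clockwise through at most one full rotation, I can assign to every edge $e$ of $(Q)$ a positive integer $w(e) \in \{0,1\}$ counting how many times it crosses a fixed basepoint of the circle (with $w(e)=1$ exactly for a loop edge $q\to q$, and $w(e)=0$ for a non-loop edge, or some analogous convention). Extending additively to paths, a path $\pi$ in $(Q)$ is a loop iff its $(Q)$-endpoints coincide, independently of $w(\pi)$. In the $n$-fold cyclic cover $(Q_n)$, however, lifting a path $\pi$ starting at a chosen lift $\widetilde{q}$ above $q$ ends at the lift of $q$ shifted by $w(\pi)\pmod n$ sheets. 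Hence $\pi$ lifts to a loop in $(Q_n)$ iff $w(\pi)\equiv 0\pmod n$. Since every edge contributes at most $1$ to $w(\pi)$, a nontrivial loop in $(Q_n)$ must have length $\ge n$. Thus no word of length strictly less than $n$ other than the empty word lies in $L_{\circ,(Q_n)}$.

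For the third claim I would argue that $L_{\circ,(Q_n)}$ is infinite by producing explicit loops of unbounded length. By the standing assumption $(Q)$ has at least one oriented cycle, so fix a nonempty loop $\omega$ in $(Q)$ based at some state $q$ with winding number $w := w(\omega)\ge 1$. Set $m := n/\gcd(w,n)$; then $w(\omega^m) = mw$ is a multiple of $n$, so $\omega^m$ lifts to a loop in $(Q_n)$ based at any chosen lift of $q$. Consequently $\omega^{mk} \in L_{\circ,(Q_n)}$ for every $k\ge 1$, and these words have unboundedly growing length, which settles infinitude.

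The only subtle step is the winding-number argument in the second claim; the main thing to be careful about is the precise convention (whether an edge may contribute $0$ or must contribute $1$, and how the ``at most one full rotation'' hypothesis on the edges of $(Q)$ plays in). Once the convention is fixed so that each edge contributes at most $1$ to the winding count and loops in $(Q_n)$ correspond exactly to paths in $(Q)$ whose winding is $\equiv 0\pmod n$, both the length lower bound and the existence of infinitely many lifted loops follow immediately.
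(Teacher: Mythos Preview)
Your approach is correct and is in fact more than the paper provides: the paper states this proposition as an ``observation'' without proof, so there is nothing to compare against. Your winding-number (monodromy) argument is exactly the right mechanism.

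Two small points to tighten. First, the parenthetical convention ``$w(e)=1$ exactly for a loop edge, $w(e)=0$ for a non-loop edge'' is not the right one: a non-loop edge can also cross the chosen basepoint, and with your stated convention $w(\pi)$ would merely count self-loop edges rather than compute the sheet shift in the cover. The correct convention is to fix a basepoint on the circle away from all states and set $w(e)=1$ precisely when the clockwise arc traversed by $e$ passes through that basepoint. You already flag that the convention needs care, so this is just a matter of writing down the right one.

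Second, in the length bound you pass from ``$w(\pi)\equiv 0\pmod n$ and each edge contributes at most $1$'' directly to ``length $\geq n$'', but this uses the additional fact that $w(\pi)\geq 1$ for every nonempty loop in $(Q)$ (otherwise $w(\pi)=0$ would satisfy the congruence for free). This holds because every edge moves strictly clockwise by a positive angle, so a nonempty closed path has strictly positive total winding; you should say this explicitly. With these two clarifications the proof is complete.
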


From automata $(Q_n)$ we obtain a family of TQFTs with defects with the same interval evaluation $\alpha_{\I}$ but circular evaluations given by languages $L_{\circ,(Q_n)}$ that shrink as $n$ increases, in the sense that 
$L_{\circ,(Q_{nm})}\subset L_{\circ,(Q_n)}$ and $L_{\circ,(Q_n)}$ does not contain words of length strictly between $0$ and $n$.

\begin{remark} A sample of coverings of the figure eight graph, in relation to subgroups of the free group $F_2$, can be found in A.~Hatcher's textbook~\cite[Section 1.3]{Hat02}. 
\end{remark}

{\it Weak coverings.} The covering automata construction can be generalized as follows. A  \emph{weak covering} $p:(Q')\lra (Q)$ of automata is a surjective map of underlying graphs $p:Y_{(Q')}\lra Y_{(Q)}$ such that 
\begin{itemize}
\item $p^{-1}(Q_{\init}) = Q'_{\init}, p^{-1}(Q_{\t}) = Q'_{\t}$, that is, $p$ preserves properties of a state to be initial and accepting, 
\item The label $a\in \Sigma$ of each edge of $(Q')$ is preserved by $p$, 
\item For each arrow $\gamma: q_1\stackrel{a}{\lra}q_2$ in $(Q)$, $a\in \Sigma$, and any $q_1'\in p^{-1}(q_1)$ there exists an arrow $\gamma':q_1' \stackrel{a}{\lra} q_2'$ which lifts $\gamma$, that is $q_2'\in p^{-1}(q_2)$ or, equivalently, $p(\gamma')=\gamma$. 
\end{itemize}
See Figure~\ref{figure-G1} for an example of a weak covering. 

\begin{figure}
    \centering
\begin{tikzpicture}[scale=0.6]
\begin{scope}[shift={(0,0)}]
\draw[thick,->] (-0.80,2) -- (-0.15,2);

\draw[thick] (0.15,2) arc (0:360:1.5mm);
\draw[thick] (5.15,2) arc (0:360:1.5mm);
\draw[thick] (5.25,2) arc (0:360:2.5mm);

\draw[thick,->] (0.25,2.25) .. controls (0.45,2.8) and (4.55,2.8) .. (4.75,2.25);
\draw[thick,<-] (0.25,1.75) .. controls (0.45,1.2) and (4.55,1.2) .. (4.75,1.75);

\draw[thick] (5.25,2.25) .. controls (5.45,2.75) and (6.05,2.75) .. (6.25,2.25);
\draw[thick] (6.25,2.25) .. controls (6.30,2.10) and (6.30,1.90) .. (6.25,1.75);
\draw[thick,<-] (5.25,1.75) .. controls (5.45,1.25) and (6.05,1.25) .. (6.25,1.75);
\node at (6.65,2) {$b$};

\node at (2.5,3.05) {$a$};
\node at (2.5,0.95) {$a$};

\node at (-0.1,1.2) {$q_1$};
\node at (4.9,1.2) {$q_2$};
\end{scope}

\begin{scope}[shift={(0,5.0)}]

\draw[thick,->] (-0.80,4) -- (-0.15,4);
\draw[thick,->] (-0.80,2) -- (-0.15,2);
\draw[thick,->] (-0.80,0) -- (-0.15,0);

\draw[thick] (0.15,4) arc (0:360:1.5mm);
\draw[thick] (0.15,2) arc (0:360:1.5mm);
\draw[thick] (0.15,0) arc (0:360:1.5mm);

\draw[thick] (5.15,0.5) arc (0:360:1.5mm);
\draw[thick] (5.25,0.5) arc (0:360:2.5mm);

\draw[thick] (5.15,3.5) arc (0:360:1.5mm);
\draw[thick] (5.25,3.5) arc (0:360:2.5mm);

\draw[thick,->] (5.25,3.35) .. controls (6.60,3.25) and (6.60,0.75) .. (5.25,0.65);
\node at (6.55,2) {$b$};

\draw[thick,<-] (5.35,3.65) .. controls (7.80,3.25) and (7.80,0.75) .. (5.45,0.35);
\node at (7.50,2) {$b$};

\draw[thick] (5.1,0.25) .. controls (4.25,-0.35) and (5,-0.77) .. (5.11,-0.80);

\draw[thick] (5.10,-0.80) .. controls (5.20,-0.80) and (5.30,-0.85) .. (5.5,-0.80);

\draw[thick,<-] (5.20,0.25)  .. controls (6.5,0.05) and (6.0,-0.75) .. (5.5,-0.80);

\node at (6.30,-0.25) {$b$};

\node at (0,4.60) {$q_1'''$};
\node at (0,2.60) {$q_1''$};
\node at (0,0.60) {$q_1'$};

\node at (5.30,4.25) {$q_2''$};
\node at (5.2,1.3) {$q_2'$};

\draw[thick,->] (0.10,4.10) .. controls (0.40,5) and (4.60,5) .. (4.85,3.75);

\draw[thick,<-] (0,3.85) .. controls (0.25,2.75) and (4.5,2.75) .. (4.75,3.53);

\draw[thick,->] (0.10,2.1) .. controls (0.40,2.1) and (4.25,2.1) .. (4.8,3.30);

\draw[thick,->] (0.10,1.9) .. controls (0.35,1.9) and (4.60,1.9) .. (4.85,0.70);

\draw[thick] (0.15,0) .. controls (0.40,0) and (1.55,0) .. (2.0,0.30);
\draw[thick] (2.1,0.40) -- (4.2,1.27);
\draw[thick,->] (4.3,1.37) .. controls (4.5,1.5) and (4.80,1.5) .. (5.17,3.25);

\draw[thick,<-] (0.00,1.85) .. controls (0.25,0) and (4.50,0) .. (4.75,0.5);

\draw[thick,<-] (0.0,-0.15) .. controls (0.40,-1) and (4.5,-1) .. (4.83,0.30);

\node at (3,5.0) {$a$};
\node at (2.5,3.30) {$a$};
\node at (1.75,2.45) {$a$};
\node at (3.5,1.8) {$a$};
\node at (1,1) {$a$};
\node at (2.8,1.00) {$a$};
\node at (2.5,-0.5) {$a$};

\end{scope}

\end{tikzpicture}
    \caption{A weak covering, with three states mapping to $q_1$ and two states mapping to $q_2$.  }
    \label{figure-G1}
\end{figure}
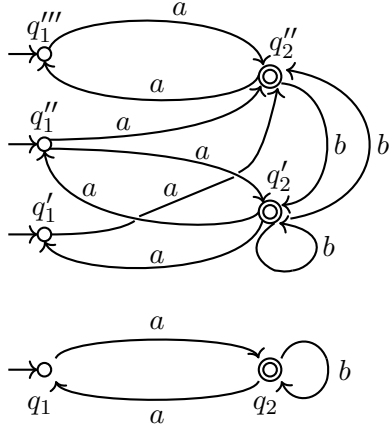

\begin{prop}  Given a weak covering $p:(Q')\lra (Q)$ of automata, the two automata share the interval language, while the trace language of $(Q')$ is a subset of that of $(Q)$: 
\[
 L_{\I,(Q')} \ = \ L_{\I,(Q)}, 
 \hspace{0.75cm} 
 L_{\I,(Q')} \ \subset \ L_{\I,(Q)}.
\]
\end{prop}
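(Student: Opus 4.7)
The plan is to handle the interval and trace languages separately, establishing the interval statement via a ``path projection + path lifting'' argument and the trace statement by projection alone. (The second inclusion in the displayed equation is a typo; as in the previous proposition about coverings, it should read $L_{\circ,(Q')}\subset L_{\circ,(Q)}$, which is what I will prove.)

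For the equality $L_{\I,(Q')}=L_{\I,(Q)}$, I would prove both inclusions. For $L_{\I,(Q')}\subset L_{\I,(Q)}$, start with an accepting path $q'_0 \stackrel{a_1}{\lra} q'_1 \stackrel{a_2}{\lra}\cdots \stackrel{a_n}{\lra} q'_n$ in $(Q')$ with $q'_0\in Q'_{\init}$, $q'_n\in Q'_{\t}$ reading the word $\omega=a_1\cdots a_n$. Applying $p$ to the states and edges of this path produces a sequence $p(q'_0)\stackrel{a_1}{\lra}\cdots \stackrel{a_n}{\lra}p(q'_n)$ in $(Q)$, using that $p$ preserves edge labels. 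Because $Q'_{\init}=p^{-1}(Q_{\init})$ and $Q'_{\t}=p^{-1}(Q_{\t})$, the endpoints $p(q'_0)$ and $p(q'_n)$ lie in $Q_{\init}$ and $Q_{\t}$ respectively, so $\omega\in L_{\I,(Q)}$.

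For the opposite inclusion $L_{\I,(Q)}\subset L_{\I,(Q')}$, I would inductively lift an accepting path $q_0\stackrel{a_1}{\lra}\cdots\stackrel{a_n}{\lra}q_n$ in $(Q)$. The surjectivity of $p$ together with $Q'_{\init}=p^{-1}(Q_{\init})$ allows us to pick some $q'_0\in p^{-1}(q_0)\subset Q'_{\init}$. The path-lifting axiom (the third bullet in the definition of weak covering) then guarantees, given $q'_{i-1}\in p^{-1}(q_{i-1})$ and the arrow $q_{i-1}\stackrel{a_i}{\lra}q_i$ in $(Q)$, the existence of an arrow $q'_{i-1}\stackrel{a_i}{\lra}q'_i$ in $(Q')$ with $p(q'_i)=q_i$. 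Iterating, one obtains a path in $(Q')$ reading $\omega$ and terminating at some $q'_n\in p^{-1}(q_n)\subset p^{-1}(Q_{\t})=Q'_{\t}$, so $\omega\in L_{\I,(Q')}$.

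For the trace inclusion $L_{\circ,(Q')}\subset L_{\circ,(Q)}$, simply observe that if $\omega$ is realized by a closed loop $q'_0\stackrel{a_1}{\lra}\cdots\stackrel{a_n}{\lra}q'_0$ in $(Q')$, its image under $p$ is a closed loop $p(q'_0)\stackrel{a_1}{\lra}\cdots\stackrel{a_n}{\lra}p(q'_0)$ in $(Q)$ reading the same word $\omega$, so $\omega\in L_{\circ,(Q)}$. The asymmetry here, as noted just before the proposition in the case of regular coverings, is that a loop in $(Q)$ may lift to a non-closed path in $(Q')$: the iterative lifting procedure produces a path ending in $p^{-1}(q_0)$, but with no mechanism forcing it to return to the originally chosen preimage $q'_0$. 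The only step that requires any care is the lifting argument in the reverse inclusion for intervals; everything else is a direct unwinding of the three defining axioms of a weak covering.
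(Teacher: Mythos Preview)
Your proof is correct and follows exactly the approach the paper uses for the analogous proposition about ordinary coverings (the paper does not supply a separate proof for the weak-covering case, evidently regarding it as the same argument): project accepting paths and loops down via $p$, and lift accepting paths up using the third bullet in the definition of weak covering together with $Q'_{\init}=p^{-1}(Q_{\init})$, $Q'_{\t}=p^{-1}(Q_{\t})$. You are also right that the second displayed inclusion is a typo for $L_{\circ,(Q')}\subset L_{\circ,(Q)}$.
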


The above constructions show that there is a lot of variety in possible trace languages $L_{\circ}$ of automata $(Q)$ with a fixed interval language $L$. Taking coverings and weak coverings of $(Q)$ shrinks the trace language, while taking the disjoint union of $(Q)$ and an automaton $(Q')$ with the empty interval language, see Proposition~\ref{prop_add_lang}, enlarges the trace language. 

\begin{question} Given a regular language $L$, is there an efficient classification of strongly circular languages $L_{\circ}$ that are trace languages of automata with the interval language $L$?
\end{question}

A similar question may be posted with ``$\Tau$-automata'' replacing ``automata'' above, see Section~\ref{subsec_aut_top} for $\Tau$-automata.  

\vspace{0.1in} 

{\it Trimming an automaton.}
For an automaton $(Q)$, denote by $Q''\subset Q$ the subset such that $q\in Q''$ if and only if $q$ is either a state in a path from some initial state (state in $Q_{\init}$) to an accepting state or $q$ is a state in some oriented loop in the graph $(Q)$. Denote by $Q'\subset Q$ the set of states reachable from states in $Q''$. The $\Bool$-submodule $\Bool Q'$ of $\Bool Q$ is closed under the action of $\Sigma$ and we turn $Q'$ into the automaton $(Q')$ using that action of $\Sigma$, with the set of initial states -- the intersection $Q'\cap Q_{\init}$ and the set of accepting state -- the intersection $Q'\cap Q_{\t}$.

$\Bool$-submodule $\Bool(Q'\setminus Q'')$ is stable under $\Sigma$. The quotient of $\Bool Q'$ by this submodule produces a free $\Bool$-module $\Bool Q''$. Form the automaton $(Q'')$ on the set of states $Q''$, with the induced $\Sigma$-action, initial states $Q''\cap Q_{\in}$ and terminal states $Q''\cap Q_{\t}$.  

Thus, from the automaton $(Q)$ we first pass to the $\Bool[\Sigma^{\ast}]$-submodule $\Bool Q'$ and the associated automaton $(Q')$, then to the quotient $\Bool[\Sigma^{\ast}]$-module $\Bool Q''$ of $\Bool Q'$ and the associated automaton $(Q'')$.  
The following observation is clear. 

\begin{prop} Automata $(Q),(Q'),$ and $(Q'')$ share the same pair of interval and circular languages $(L_{\I,(Q)},L_{\circ,(Q)})$. 
\end{prop}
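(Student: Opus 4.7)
The plan is to verify each equality in $L_{\I,(Q)}=L_{\I,(Q')}=L_{\I,(Q'')}$ and $L_{\circ,(Q)}=L_{\circ,(Q')}=L_{\circ,(Q'')}$ directly from the definitions, using the crucial observation that every state visited by an accepting path or by an oriented loop in $(Q)$ lies, by the very definition of $Q''$, inside $Q''\subseteq Q'\subseteq Q$. Since the $\Sigma$-actions on $(Q')$ and on $(Q'')$ are induced from that on $(Q)$, accepting paths and cycles of $(Q)$ survive intact under trimming, and no genuinely new such paths can be created.

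For the interval languages, take $\omega\in L_{\I,(Q)}$ realized by an accepting path $q_0\xrightarrow{a_1}q_1\xrightarrow{a_2}\cdots\xrightarrow{a_n}q_n$ in $(Q)$ with $q_0\in Q_{\init}$ and $q_n\in Q_{\t}$. Each $q_i$ lies on this accepting path, hence in $Q''$ by definition, and every edge of the path is a transition of $(Q)$ between states of $Q''$, which remains a transition of both $(Q')$ and $(Q'')$. Since $q_0\in Q_{\init}\cap Q''$ and $q_n\in Q_{\t}\cap Q''$, the same sequence is an accepting path in $(Q')$ and in $(Q'')$, yielding $L_{\I,(Q)}\subseteq L_{\I,(Q'')}$ and $L_{\I,(Q)}\subseteq L_{\I,(Q')}$. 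The reverse inclusions are immediate: $(Q')$ and $(Q'')$ are sub-automata of $(Q)$, with initial and accepting states obtained by intersection with $Q'$ and $Q''$ respectively and with transitions inherited from $(Q)$, so any accepting path in the smaller automaton is visibly an accepting path in $(Q)$.

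The circular languages are handled by the same bookkeeping. If $\omega\in L_{\circ,(Q)}$ is read along a cyclic path in the graph of $(Q)$, then every vertex of that cycle lies on an oriented loop of $(Q)$ and therefore belongs to $Q''$; the cycle thus uses only transitions among $Q''$-states and is also a cycle of $(Q'')$ and of $(Q')$. Conversely, every cycle in $(Q')$ or in $(Q'')$ is directly a cycle in $(Q)$, since the relevant transitions are restrictions of those of $(Q)$.

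The main step requiring care, more bookkeeping than genuine obstacle, is unpacking the definition of the transition function of $(Q'')$: viewing $\Bool Q''$ as the quotient $\Bool Q'/\Bool(Q'\setminus Q'')$, one checks that the induced $\Sigma$-action is $q\cdot a=\sum_{q'\in\delta(q,a)\cap Q''}q'$, which is exactly the transition function of the full subgraph of $(Q)$ on the vertex set $Q''$, with labels and with initial/accepting marks inherited by intersection. Once this identification is fixed, both equalities reduce to the tautology that accepting paths and oriented loops of $(Q)$ never leave $Q''$.
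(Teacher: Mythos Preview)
Your proof is correct and supplies exactly the routine verification the paper omits (the paper simply asserts the observation is clear). Your key insight---that every state on an accepting path or on an oriented loop of $(Q)$ already lies in $Q''$, so such paths and loops survive unchanged in both $(Q')$ and $(Q'')$---is precisely what makes the statement immediate, and your final paragraph correctly pins down the induced transition function on $(Q'')$ as that of the full subgraph on $Q''$.
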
 

The proposition says that the three TQFTs associated to the three automata evaluate the same on all closed morphisms in $\CobSI$ (by a closed morphism in a monoidal category we mean an endomorphism of the identity object $\one$). 

Passage from $(Q)$ to $(Q'')$ and from $\Bool Q$ to its subquotient $\Bool[\Sigma^{\ast}]$-module $\Bool Q''$ is analogous to passing from an automaton to the associated trim automaton.


\subsection{Path integral interpretation of automata}
\label{subsec_path}

Consider a regular language $L$ and an automaton $(Q)$ that describes it. In the graph of the automaton oriented edges are labelled by letters (elements of $\Sigma$), while in the category $\CobSI$ it is vertices (defects) inside a cobordism that are labelled by elements of $\Sigma$. 

Let us now pass to the Poincar\'e dual decomposition of our cobordisms. Suppose given a morphism  $u\in\CobSI$ from a sign sequence $\varepsilon$ to $\varepsilon'$, thus a cobordism decorated as earlier. Consider the Poincar\'e dual of the decomposition of $u$ into defects and intervals between defects. Now each such interval becomes a vertex and a defect becomes an interval, see Figure~\ref{figure-A5} for an example. The orientation of intervals and circles is preserved. 

\vspace{0.1in} 

\begin{figure}
    \centering
\begin{tikzpicture}[scale=0.6]
\begin{scope}[shift={(0,0)}]
\draw[thick,dashed] (0,4) -- (8,4);
\draw[thick,dashed] (0,0) -- (8,0);
\node at (0.5,4.35) {$+$};
\node at (1.5,4.35) {$-$};
\node at (2.5,4.35) {$+$};
\node at (3.5,4.35) {$-$};

\node at (0.5,-0.35) {$+$};
\node at (1.5,-0.35) {$+$};
\node at (3.5,-0.35) {$+$};

\draw[thick] (0.50,4) .. controls (0.40,3.5) and (0.75,2.50) .. (0.75,1.97);
\draw[thick,<-] (0.75,2) .. controls (0.75,1.50) and (0.75,0.50) .. (0.50,0);
\draw[thick,fill] (0.83,2.5) arc (0:360:1.5mm);
\node at (0.15,2.5) {$a$};

\draw[thick,fill] (0.85,1) arc (0:360:1.5mm);
\node at (0.15,1) {$b$};

\draw[thick,->] (1.5,4) .. controls (1.6,2.5) and (2.4,2.5) .. (2.5,4);
\draw[thick,fill] (2.15,2.85) arc (0:360:1.5mm);
\node at (2,2.35) {$c$};

\draw[thick,<-] (3,3) .. controls (3.15,3.25) and (3.75,3.75) .. (3.5,4);

\draw[thick,->] (1.5,0) .. controls (1.5,0.25) and (1.5,1.25) .. (2.5,1.25);
\draw[thick,fill] (1.95,1) arc (0:360:1.5mm);
\node at (1.40,1.35) {$b$};

\draw[thick,->] (3.5,0) .. controls (3.75,0.25) and (3,1.5) .. (3,2);

\draw[thick,<-] (4,2) .. controls (4.25,2.5) and (5,2) .. (6,3);
\draw[thick,fill] (4.75,2.30) arc (0:360:1.5mm);
\node at (4.50,2.75) {$c$};
\draw[thick,fill] (5.65,2.60) arc (0:360:1.5mm);
\node at (5.35,3.05) {$a$};

\draw[thick,<-] (6,1) arc (0:360:0.75);
\draw[thick,fill] (4.65,1) arc (0:360:1.5mm);
\node at (4,1.1) {$b$};

\draw[thick,<-] (8,3) arc (0:360:0.75);

\draw[thick,<-] (6.5,2) .. controls (6.75,1) and (7.25,1) .. (7.5,1);

\draw[thick,->] (9.5,1.5) -- (11.5,1.5);
\node at (10.5,2.75) {Poincar\'e};
\node at (10.5,2.00) {dual};
\end{scope}

\begin{scope}[shift={(13,0)}]
\draw[thick,dashed] (0,4) -- (8,4);
\draw[thick,dashed] (0,0) -- (8,0);
\node at (0.5,4.5) {$+$};
\node at (1.5,4.5) {$-$};
\node at (2.5,4.5) {$+$};
\node at (3.5,4.5) {$-$};

\node at (0.5,-0.5) {$+$};
\node at (1.5,-0.5) {$+$};
\node at (3.5,-0.5) {$+$};

\draw[thick] (0.50,4) .. controls (0.40,3.5) and (0.75,2.50) .. (0.75,1.47);
\draw[thick,<-] (0.75,1.50) .. controls (0.75,1.25) and (0.75,0.50) .. (0.50,0);

\draw[thick,fill,white] (0.65,4) arc (0:360:0.15cm);
\draw[thick] (0.65,4) arc (0:360:0.15cm);

\draw[thick,fill,white] (0.65,0) arc (0:360:0.15cm);
\draw[thick] (0.65,0) arc (0:360:0.15cm);
 
\node at (0.25,2.8) {$a$};
\draw[thick,fill,white] (0.90,1.85) arc (0:360:0.15cm);
\draw[thick] (0.90,1.85) arc (0:360:0.15cm);
\node at (0.30,0.8) {$b$};

\draw[thick,->] (1.5,4) .. controls (1.6,2.5) and (2.4,2.5) .. (2.5,3.85);
\draw[thick,fill,white] (1.65,4) arc (0:360:0.15cm);
\draw[thick] (1.65,4) arc (0:360:0.15cm);
\draw[thick,fill,white] (2.65,4) arc (0:360:0.15cm);
\draw[thick] (2.65,4) arc (0:360:0.15cm);
\node at (2,2.45) {$c$};

\draw[thick,fill,white] (3.65,4) arc (0:360:0.15cm);
\draw[thick] (3.65,4) arc (0:360:0.15cm);

\draw[thick,->] (1.5,0) .. controls (1.5,0.25) and (1.5,1.25) .. (2.5,1.25);
\draw[thick,fill,white] (1.65,0) arc (0:360:0.15cm);
\draw[thick] (1.65,0) arc (0:360:0.15cm);
\draw[thick,fill,white] (2.80,1.25) arc (0:360:0.15cm);
\draw[thick] (2.80,1.25) arc (0:360:0.15cm);
\node at (2,0.70) {$b$};

\draw[thick,fill,white] (3.65,0) arc (0:360:0.15cm);
\draw[thick] (3.65,0) arc (0:360:0.15cm);

\draw[thick,<-] (4.0,2.1) .. controls (4.25,2.5) and (5,2) .. (6,3);
\draw[thick,fill,white] (6.15,3) arc (0:360:0.15cm);
\draw[thick] (6.15,3) arc (0:360:0.15cm); 
\draw[thick,fill,white] (4.05,2) arc (0:360:0.15cm);
\draw[thick] (4.05,2) arc (0:360:0.15cm);
\node at (4.50,2.70) {$c$};
\draw[thick,fill,white] (5.15,2.40) arc (0:360:0.15cm);
\draw[thick] (5.15,2.40) arc (0:360:0.15cm);
\node at (5.35,3.00) {$a$};

\draw[thick,->] (4.5,1) arc (180:-180:0.75);
\node at (4.2,1.30) {$b$};
\draw[thick,fill,white] (6.15,1) arc (0:360:0.15cm);
\draw[thick] (6.15,1) arc (0:360:0.15cm);

\draw[thick,<-] (8,3) arc (0:360:0.75);

\draw[thick] (7.5,1) arc (0:360:0.15cm);
\end{scope}

\end{tikzpicture}
    \caption{A morphism in $\Cob_{\Sigma,\I}$ and its Poincar\'e dual presentation.}
    \label{figure-A5}
\end{figure}
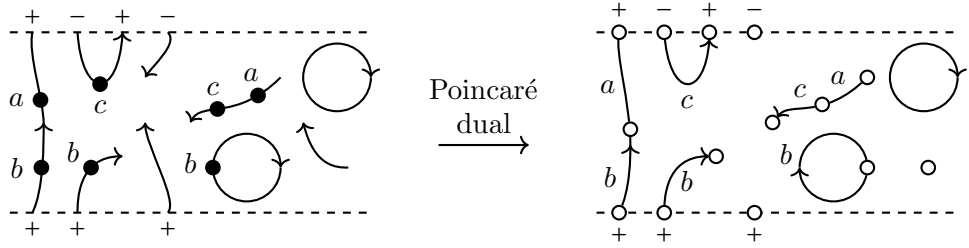

Let us understand this transformation, for a particular morphisms on the left of Figure~\ref{figure-A5}. A vertical interval (a morphism from $+$ to $+$) has defects $a,b$. It turns into an interval with three vertices (two at the endpoints) and two edges, labelled $a$ and $b$. An arc at the top,  which is a morphism from $\emptyset_0$ to $-+$ with a defect $c$, becomes an arc with two vertices and an interval labelled $c$. In particular, all boundary points become vertices and each edge inside the cobordism between two defects becomes a vertex as well. 

A half-interval with no defects on it (there are two such on the left of Figure~\ref{figure-A5}) becomes a single boundary vertex which carries a sign ($+$ or $-$). A half-interval with one or more defects on it becomes an  interval with two or more vertices and labels of defects becoming labels of edges (the example in the figure is for one defect $b$ on a half-interval). A floating interval with $k>0$ defects becomes an interval with $k+1$ vertices. A floating interval with no defects turns into a floating point. 

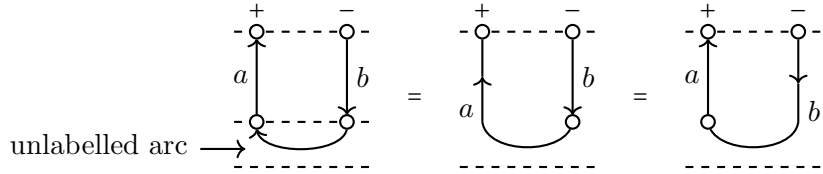
\begin{figure}
    \centering
\begin{tikzpicture}[scale=0.6]
\begin{scope}[shift={(0,0)}]

\node at (0.5,3.45) {$+$};
\node at (2.5,3.45) {$-$};

\draw[thick,dashed] (0,3) -- (3,3);
\draw[thick,dashed] (0,1) -- (3,1);
\draw[thick,dashed] (0,0) -- (3,0);
\draw[thick,<-] (0.5,2.85) -- (0.5,1);
\node at (0.15,2) {$a$};

\draw[thick,->] (2.5,3) -- (2.5,1.15);
\node at (2.85,2) {$b$};

\draw[thick,<-] (0.5,0.85) .. controls (0.6,0.25) and (2.4,0.25) .. (2.5,0.85);

\draw[thick,fill,white] (0.65,3) arc (0:360:1.5mm);
\draw[thick] (0.65,3) arc (0:360:1.5mm);

\draw[thick,fill,white] (2.65,3) arc (0:360:1.5mm);
\draw[thick] (2.65,3) arc (0:360:1.5mm);

\draw[thick,fill,white] (0.65,1) arc (0:360:1.5mm);
\draw[thick] (0.65,1) arc (0:360:1.5mm);

\draw[thick,fill,white] (2.65,1) arc (0:360:1.5mm);
\draw[thick] (2.65,1) arc (0:360:1.5mm);

\node at (-3,0.5) {unlabelled arc};
\draw[thick,->] (-0.75,0.40) -- (0.25,0.40);
\node at (4,1.5) {$=$};
\end{scope}

\begin{scope}[shift={(5,0)}]

\node at (0.5,3.45) {$+$};
\node at (2.5,3.45) {$-$};

\draw[thick,dashed] (0,3) -- (3,3);
\draw[thick,dashed] (0,0) -- (3,0);

\draw[thick] (0.5,2.85) -- (0.5,2);
\draw[thick,<-] (0.5,2) -- (0.5,1);
\node at (0.15,1.25) {$a$};

\draw[thick,->] (2.5,3) -- (2.5,1.15);
\node at (2.85,2) {$b$};

\draw[thick] (0.5,1) .. controls (0.6,0.25) and (2.4,0.25) .. (2.5,1);

\draw[thick,fill,white] (0.65,3) arc (0:360:1.5mm);
\draw[thick] (0.65,3) arc (0:360:1.5mm);

\draw[thick,fill,white] (2.65,3) arc (0:360:1.5mm);
\draw[thick] (2.65,3) arc (0:360:1.5mm);

\draw[thick,fill,white] (2.65,1) arc (0:360:1.5mm);
\draw[thick] (2.65,1) arc (0:360:1.5mm);
\node at (4,1.5) {$=$};
\end{scope}

\begin{scope}[shift={(10,0)}]

\node at (0.5,3.45) {$+$};
\node at (2.5,3.45) {$-$};

\draw[thick,dashed] (0,3) -- (3,3);
\draw[thick,dashed] (0,0) -- (3,0);
\draw[thick,<-] (0.5,2.85) -- (0.5,1);
\node at (0.15,2) {$a$};

\draw[thick,->] (2.5,3) -- (2.5,1.85);
\draw[thick] (2.5,1.85) -- (2.5,1);
\node at (2.85,1.25) {$b$};

\draw[thick] (0.5,1) .. controls (0.6,0.25) and (2.4,0.25) .. (2.5,1);

\draw[thick,fill,white] (0.65,3) arc (0:360:1.5mm);
\draw[thick] (0.65,3) arc (0:360:1.5mm);

\draw[thick,fill,white] (2.65,3) arc (0:360:1.5mm);
\draw[thick] (2.65,3) arc (0:360:1.5mm);

\draw[thick,fill,white] (0.65,1) arc (0:360:1.5mm);
\draw[thick] (0.65,1) arc (0:360:1.5mm);
\end{scope}

\end{tikzpicture}
    \caption{An unlabelled arc is composed with two labelled arcs, and then one of the two vertices of the unlabelled arc can be erased. One may obtain similar diagrams for the opposite orientation.}
    \label{figure-A6}
\end{figure}

An arc or a circle with no defects remain as they are. This creates a minor inconvenience -- one should think of such an arc as unlabeled, but when composed with a labelled interval, the label of the latter can be extended to the arc, see Figure~\ref{figure-5}.

\begin{remark} \label{remark_pi} A related construction  starts with a category $\mcC_{\Sigma}$ with a single object $W$ with generating morphisms $a:W\lra W$ for each $a\in \Sigma$, with no relations on these morphisms, so that $\End_{\mcC_{\Sigma}}(W)\cong \Sigma^{\ast}$. One then passes to the rigid monoidal completion $\widetilde{\mcC_{\Sigma}}$, a category with objects -- sequences of signed objects of $\mcC_{\Sigma}$ and morphisms -- oriented one-manifolds decorated by morphisms in $\mcC_{\Sigma}$. In this category, endomorphisms of the unit object $\one$ (the empty sequence) are finite unions of loops in $\mcC_{\Sigma}$, that is, pairs $(Y,\gamma)$, where $Y\in \Ob(\mcC_{\Sigma})$ and $\gamma$ is an endomorphism of $Y$, modulo the equivalence relation: for any morphisms $\gamma_1:Y_1\lra Y_2$, $\gamma_2:Y_2\lra Y_1$ the pairs $(Y_1,\gamma_2\gamma_1)$ and $(Y_2,\gamma_1\gamma_2)$ are equivalent. 

An unlabelled arc in this category,  see Figure~\ref{figure-A6}, can be thought of as an arc labelled by the identity morphism of the object assigned to its boundary points. Any identity morphism in $\widetilde{\mcC_{\Sigma}}$ will be given by a union of such unlabelled arcs, going vertically (rather than sideways, as in Figure~\ref{figure-A6}). 
Such a rigid symmetric monoidal completion category $\widetilde{\mcC}$ can be defined for any small category $\mcC$. 
\end{remark} 

The Poincar\'e dual setup matches the graph description of finite state automata, for now an oriented interval between two vertices in a floating component of a cobordism is labelled by an element of $\Sigma$, similar to labeling of oriented intervals in the graph of the automaton by letters in $\Sigma$. 

A word $\omega\in \Sigma^{\ast}$ defines an oriented floating interval $I(\omega)$, see Figure~\ref{figure-A7} on the left, the Poincar\'e dual of the one in Figure~\ref{figure-5}.  

\vspace{0.1in} 

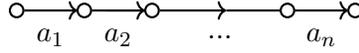
\begin{figure}
    \centering
\begin{tikzpicture}[scale=0.6]

\begin{scope}[shift={(0.1,0)}]

\draw[thick,->] (0,1.5) -- (1.35,1.5);
\draw[thick,->] (1.35,1.5) -- (2.85,1.5);
\draw[thick,->] (2,1.5) -- (4.65,1.5);
\draw[thick,->] (4.65,1.5) -- (7.35,1.5);

\draw[thick,fill,white] (0.15,1.5) arc (0:360:1.5mm);
\draw[thick] (0.15,1.5) arc (0:360:1.5mm);

\node at (0.75,0.9) {$a_1$};

\draw[thick,fill,white] (1.65,1.5) arc (0:360:1.5mm);
\draw[thick] (1.65,1.5) arc (0:360:1.5mm);

\node at (2.25,0.9) {$a_2$};

\node at (4.5,0.85) {$\cdots$};

\draw[thick,fill,white] (3.15,1.5) arc (0:360:1.5mm);
\draw[thick] (3.15,1.5) arc (0:360:1.5mm);

\draw[thick,fill,white] (6.15,1.5) arc (0:360:1.5mm);
\draw[thick] (6.15,1.5) arc (0:360:1.5mm);

\node at (6.75,0.9) {$a_n$};

\draw[thick,fill,white] (7.65,1.5) arc (0:360:1.5mm);
\draw[thick] (7.65,1.5) arc (0:360:1.5mm);
\end{scope}

\end{tikzpicture}
    \caption{Decorated graph $I(\omega)$ associated to a word $\omega = a_1 \cdots a_n$. }
    \label{figure-A7}
\end{figure}

Word $\omega$ is in the language $L$ if and only if there exists a map 
$\psi: I(\omega)\lra (Q)$  from the graph of the interval to the graph of the automaton  that 
\begin{itemize}
\item Takes vertices to vertices and edges to edges, preserving orientation of edges, 
\item Labels of edges are preserved as well, 
\item Takes the initial vertex of the interval to one of the initial vertices of $(Q)$, 
\item Takes the terminal vertex of the interval to one of the accepting vertices of $(Q)$. 
\end{itemize}

More generally, we can consider all maps $\tau\in \Hom(I(\omega),(Q))$ of oriented graphs that satisfy the first condition and evaluate $\tau$ to $1\in \Bool$ if it additionally satisfies the next three conditions and to $0$ otherwise. Denote the evaluation by $\brak{\tau}$. Recall the evaluation $\alpha_{\I}$ that evaluates words in $L$ to $1$ and not in $L$ to $0$. Note that a sum of elements of $\Bool$ is $1$ if at least one of the terms is $1$, otherwise it is $0$. 

We have
\begin{equation}
  \alpha(\omega) \ = \ \sum_{\tau \in \mathsf{Map}(I(\omega),(Q))} \brak{\tau}.
\end{equation} 
In other words, $\alpha$ can be written as the sum of evaluations, over all maps from the oriented chain graph $I(\omega)$ to the graph of $(Q)$. 

One can loosely interpret this expression as a path integral interpretation of the evaluation $\alpha$, determining whether a word $\omega$ is in the language $L$. We sum over all maps from a graph which is a chain to $(Q)$, and assign $1$ to the map if the labels of all edges match, and boundary vertices are mapped to $Q_{\init}$ and $Q_{\t}$, respectively, see Figure~\ref{figure-C1}. This evaluation $\brak{\tau}$ can be written as the product of local evaluations, one for each edge of the graph $I(\omega)$, and for each of the two boundary vertices of $I(\omega)$. 

A degenerate interval $I(\emptyset_0)$, for the empty word $\emptyset_0$, is a single vertex in the Poincar\'e dual presentation. We sum over all maps to $(Q)$; in this case, over all states of $Q$, and evaluate a map to $1$ if the state if both an initial and an accepting state of $(Q)$. The empty word is in $L$ if such a state exists. 

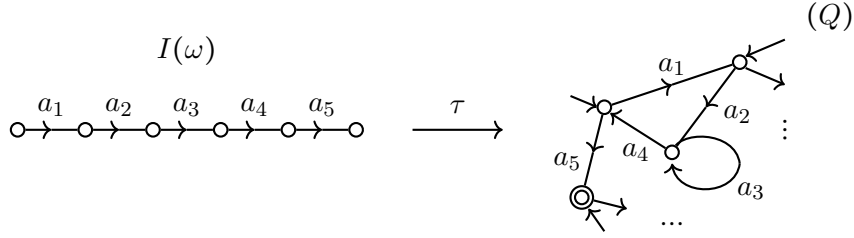
\begin{figure}
    \centering
\begin{tikzpicture}[scale=0.6]
\begin{scope}[shift={(0,0)}]
\draw[thick,->] (0,2) -- (0.75,2);
\draw[thick,->] (0.75,2) -- (2.25,2);
\draw[thick,->] (2.25,2) -- (3.75,2);
\draw[thick,->] (3.75,2) -- (5.25,2);
\draw[thick,->] (5.25,2) -- (6.75,2);
\draw[thick] (6.75,2) -- (7.5,2);

\node at (0.75,2.5) {$a_1$};
\node at (2.25,2.5) {$a_2$};
\node at (3.75,2.5) {$a_3$};
\node at (5.25,2.5) {$a_4$};
\node at (6.75,2.5) {$a_5$};

\draw[thick,white,fill] (0.15,2) arc (0:360:1.5mm);
\draw[thick] (0.15,2) arc (0:360:1.5mm);

\draw[thick,white,fill] (1.65,2) arc (0:360:1.5mm);
\draw[thick] (1.65,2) arc (0:360:1.5mm);

\draw[thick,white,fill] (3.15,2) arc (0:360:1.5mm);
\draw[thick] (3.15,2) arc (0:360:1.5mm);

\draw[thick,white,fill] (4.65,2) arc (0:360:1.5mm);
\draw[thick] (4.65,2) arc (0:360:1.5mm);

\draw[thick,white,fill] (6.15,2) arc (0:360:1.5mm);
\draw[thick] (6.15,2) arc (0:360:1.5mm);

\draw[thick,white,fill] (7.65,2) arc (0:360:1.5mm);
\draw[thick] (7.65,2) arc (0:360:1.5mm);

\node at (3.75,3.75) {$I(\omega)$};

\node at (9.75,2.5) {$\tau$};
\draw[thick,->] (8.75,2) -- (10.75,2);

\end{scope}

\begin{scope}[shift={(12,0.5)}]

\node at (6,4) {$(Q)$};

\draw[thick,->] (0.25,2.25) -- (0.85,2);
\draw[thick,->] (1,2) -- (2.5,2.5);
\draw[thick] (2.5,2.5) -- (4,3);

\node at (2.5,2.9) {$a_1$};
\node at (3.95,1.85) {$a_2$};
\node at (4.25,0.15) {$a_3$};
\node at (1.7,1.0) {$a_4$};
\node at (0.18,0.8) {$a_5$};

\draw[thick,->] (5,3.5) -- (4.13,3.13);
\draw[thick,->] (4.13,2.87) -- (5,2.5);
\draw[thick,->] (4,3) -- (3.25,2);
\draw[thick] (3.25,2) -- (2.5,1);
\draw[thick,<-] (1.13,1.87) -- (2.5,1);
\draw[thick,->] (1,2) -- (0.75,1);
\draw[thick] (0.75,1) -- (0.50,0);
\draw[thick,->] (0.5,0) -- (1.5,-0.25);
\draw[thick,<-] (0.65,-0.25) -- (1,-0.75);
\node at (5,1.5) {$\vdots$};
\node at (2.5,-0.65) {$\cdots$};

\draw[thick] (2.5,1) .. controls (2.6,1.5) and (3.9,1.5) .. (4,0.75);
\draw[thick,<-] (2.5,0.75) .. controls (2.6,0) and (3.9,0) .. (4,0.75);

\draw[thick,fill,white] (1.15,2) arc (0:360:1.5mm);
\draw[thick] (1.15,2) arc (0:360:1.5mm);

\draw[thick,fill,white] (4.15,3) arc (0:360:1.5mm);
\draw[thick] (4.15,3) arc (0:360:1.5mm);

\draw[thick,fill,white] (2.65,1) arc (0:360:1.5mm);
\draw[thick] (2.65,1) arc (0:360:1.5mm);

\draw[thick,fill,white] (0.75,0) arc (0:360:2.5mm);
\draw[thick,fill,white] (0.65,0) arc (0:360:1.5mm);
\draw[thick] (0.65,0) arc (0:360:1.5mm);
\draw[thick] (0.75,0) arc (0:360:2.5mm);

\end{scope}

\end{tikzpicture}
    \caption{A map of graph $I(\omega)$ to $(Q)$ that evaluates to $1$, for word length $|\omega|=5$. The leftmost vertex of $I(\omega)$ maps to one of the initial states of $(Q)$ and the rightmost  vertex maps to an accepting state (state in $Q_{t}$. }
    \label{figure-C1}
\end{figure}

\vspace{0.1in}

This interpretation extends to circular words. Recall that automaton $(Q)$ determines a circular language $L_{\circ}=L_{\circ,(Q)}$ and the corresponding circular evaluation $\alpha_{\circ}$, where a circular word $\omega\in L_{\circ}$ if there exists an $\omega$-path in $(Q)$, and then $\alpha_{\circ}(\omega)=1$. Denote by $\SS(\omega)$ the graph which is an oriented circle with word $\omega$ written along the edges. We have 
\begin{equation}\label{eq_path_circle}
  \alpha_{\circ}(\omega) \ = \ \sum_{\tau \in \mathsf{Map}(\SS(\omega),(Q))} \brak{\tau}.
\end{equation} 
Here, we are looking at all maps $\tau$ of the circle graph to the graph of $(Q)$ and evaluate a map to $1$ if and only if the labels of all edges match, see Figure~\ref{figure-C2}.

\vspace{0.1in} 

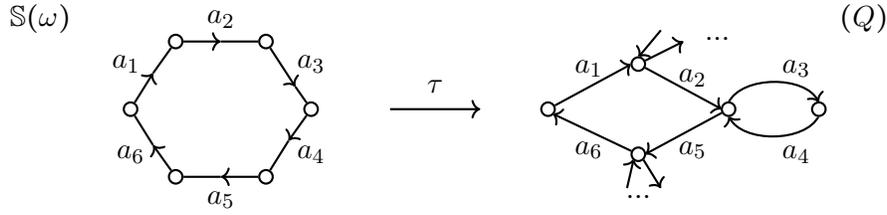
\begin{figure}
    \centering
\begin{tikzpicture}[scale=0.6]
\begin{scope}[shift={(0,0)}]
\draw[thick,->] (1,3) -- (2,3);
\draw[thick] (2,3) -- (3,3);

\draw[thick] (1,0) -- (2,0);
\draw[thick,<-] (2,0) -- (3,0);

\draw[thick,->] (3,3) -- (3.65,2);
\draw[thick] (3.65,2) -- (4,1.5);

\draw[thick,->] (4,1.5) -- (3.5,0.8);
\draw[thick] (3.5,0.8) -- (3,0);

\draw[thick] (0,1.5) -- (0.5,0.7);
\draw[thick,<-] (0.5,0.7) -- (1,0);

\draw[thick] (1,3) -- (0.5,2.3);
\draw[thick,<-] (0.5,2.3) -- (0,1.5);

\draw[thick,fill,white] (0.15,1.5) arc (0:360:1.5mm);
\draw[thick] (0.15,1.5) arc (0:360:1.5mm);

\draw[thick,fill,white] (1.15,3) arc (0:360:1.5mm);
\draw[thick] (1.15,3) arc (0:360:1.5mm);

\draw[thick,fill,white] (1.15,0) arc (0:360:1.5mm);
\draw[thick] (1.15,0) arc (0:360:1.5mm);

\draw[thick,fill,white] (3.15,3) arc (0:360:1.5mm);
\draw[thick] (3.15,3) arc (0:360:1.5mm);

\draw[thick,fill,white] (3.15,0) arc (0:360:1.5mm);
\draw[thick] (3.15,0) arc (0:360:1.5mm);

\draw[thick,fill,white] (4.15,1.5) arc (0:360:1.5mm);
\draw[thick] (4.15,1.5) arc (0:360:1.5mm);

\node at (-2,3.5) {$\SS(\omega)$};

\node at (-0.1, 2.5) {$a_1$};
\node at (2, 3.5) {$a_2$};
\node at (4, 2.5) {$a_3$};
\node at (4, 0.5) {$a_4$};
\node at (2,-0.5) {$a_5$};
\node at (0, 0.5) {$a_6$};

\node at (6.75,2.0) {$\tau$};
\draw[thick,->] (5.75,1.5) -- (7.75,1.5);
\end{scope}

\begin{scope}[shift={(9.25,-0.50)}]

\draw[thick,->] (0,2) -- (1.85,3);
\draw[thick,->] (2,1) -- (0.1,1.9);
\draw[thick,->] (2.5,3.75) -- (2,3.15);
\draw[thick,->] (2,3) -- (3,3.5);
\draw[thick,->] (2,3) -- (3.85,2);
\draw[thick,->] (4,2) -- (2.15,1);
\draw[thick,->] (1.75,0.25) -- (1.9,0.9);
\draw[thick,->] (2.1,0.9) -- (2.5,0.25);
\draw[thick,->] (4,2.15) .. controls (4.3,2.75) and (5.7,2.75) .. (6,2.15);
\draw[thick,<-] (4,1.85) .. controls (4.3,1.25) and (5.7,1.25) .. (6,1.85);

\node at (0.9,2.9) {$a_1$};
\node at (3.2,2.7) {$a_2$};
\node at (5.5,2.9) {$a_3$};
\node at (5.5,1.0) {$a_4$};
\node at (3.2,1.1) {$a_5$};
\node at (0.9,1.1) {$a_6$};

\node at (3.75,3.5) {$\cdots$};
\node at (2,0) {$\cdots$};

\draw[thick,fill,white] (0.15,2) arc (0:360:1.5mm);
\draw[thick] (0.15,2) arc (0:360:1.5mm);

\draw[thick,fill,white] (2.15,3) arc (0:360:1.5mm);
\draw[thick] (2.15,3) arc (0:360:1.5mm);

\draw[thick,fill,white] (2.15,1) arc (0:360:1.5mm);
\draw[thick] (2.15,1) arc (0:360:1.5mm);

\draw[thick,fill,white] (4.15,2) arc (0:360:1.5mm);
\draw[thick] (4.15,2) arc (0:360:1.5mm);

\draw[thick,fill,white] (6.15,2) arc (0:360:1.5mm);
\draw[thick] (6.15,2) arc (0:360:1.5mm);

\node at (7,4) {$(Q)$};
\end{scope}

\end{tikzpicture}
    \caption{Map of a circle graph to $(Q)$ that evaluates to $1$, word length $|\omega|=6$. }
    \label{figure-C2}
\end{figure}

\vspace{0.1in} 

Thus, we can think of both languages $L_{\I,(Q)}$ and $L_{\circ,(Q)}$ associated to an automaton $(Q)$ as computed via Boolean-valued path integrals or sums. To determine if $\omega$ is in $L_{\I,(Q)}$ we sum over maps of the interval graph $I(\omega)$ to the graph of $(Q)$. Whether $\omega$ is in $L_{\circ,(Q)}$ is determined by the sum over all maps of the circle graph $\SS(\omega)$ to the graph of $(Q)$. 


\subsection{Relation to topological theories}
\label{subsec_relations}

The earlier paper~\cite{IK-top-automata} obtained a relation between Boolean topological theories and automata. There one starts with a regular language $L_{\I}$ and a circular language $L_{\circ}$ and builds state spaces $A(\varepsilon)$ for oriented $0$-manifolds given by sign sequences $\varepsilon$.  The state spaces $A(\varepsilon)$ are finite $\Bool$-modules, but they are not necessarily free or projective modules. The resulting theory is not a TQFT, in general: maps 
\[  A(\varepsilon)\otimes A(\varepsilon') \lra A(\varepsilon\sqcup \varepsilon')
\]
are not isomorphisms, in general, unlike in the construction of the present paper. 

In the present paper, one starts with an automaton  and defines a Boolean one-dimensional TQFT, with the state space $\Bool Q$ for the $+$ point. In particular, the state spaces are free $\Bool$-modules (see Section~\ref{subsec_aut_top} for a generalization to projective $\Bool$-modules where one replaces the discrete topological space of states of $(Q)$ by a finite topological space $X$).    
If the automaton $(Q)$ describes the language $L_{\I}$, the state space $A(+)$ can be obtained as the subquotient $\Bool$-module of $\Bool Q$. 
Thus, in~\cite{IK-top-automata} one starts with a pair of languages $(L_{\I},L_{\circ})$, while in the present paper both languages $L,L_{\circ}$ are determined by the automaton $(Q)$. 
If one picks the pair $(L_{\I},L_{\circ})$ associated to the automaton $(Q)$, the state space $A(\varepsilon)$ for a sign sequence $\epsilon$ in~\cite{IK-top-automata} is a subquotient of the free $\Bool$-module $\mcFQ(\varepsilon)$. This can be phrased more naturally, as the topological theory given by $(L_{\I},L_{\circ})$ being a subquotient theory of $\mcFQ$.

In particular, an automaton gives rise to a Boolean one-dimensional oriented TQFT (with inner endpoints and defects), while a pair of regular languages $(L_{\I},L_{\circ})$, with the second language circular, gives rise, in general, only to a one-dimensional oriented topological theory (also with inner endpoints and defects). 

Boolean TQFTs can only produce strongly circular trace languages $L_{\circ}$, see earlier and Proposition~\ref{prop_only_strongly} in the later Section~\ref{subsec_aut_top}, where the construction is extended to $\Tau$-automata and giving projective rather than free Boolean state spaces. In topological theories, one can use more general languages (circular rather than only strongly circular), still resulting in theories with finite state spaces but failing the TQFT axiom, see also the table in Section~\ref{subsec_table}. 

%
%

\section{Extending TQFTs to 1-foams}
\label{sec_extending} 


\subsection{Boolean 1D TQFTs and finite topological spaces}\label{subsec_topological}
$\quad$ 
\vspace{0.07in} 

{\it Finite projective $\Bool$-modules and finite topological spaces.}
Assume $M$ is a finite $\Bool$-module. Then $M$ has a lattice structure, with 
\[
 x \wedge y  :=  \sum_{c\le x,c\le y} c, \hspace{0.75cm}  1 := \sum_{c\in M} c,  
\]
that is, $x\wedge y$ is the largest element less than or equal to both $x$ and $y$. Denote by $M^{\wedge}$ the set $M$ viewed as a lattice with \emph{join} $\vee$ and \emph{meet} $\wedge$ as above. 

\begin{prop} The following conditions on a finite $\Bool$-module $M$ are equivalent. 
\begin{enumerate}
\item $M^{\wedge}$ is a distributive lattice. 
\item $M$ is a retract of a free $\Bool$-module $\Bool^n$ for some $n$.  
\item $M$ is projective in the category of finite $\Bool$-modules, i.e., it has the lifting property for surjective semimodule homomorphisms. 
\item $M$ is isomorphic to the lattice of open sets $\mcUX$ of a finite topological space $X$. 
\end{enumerate}
\end{prop}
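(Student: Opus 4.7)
The plan is to run the cycle (1) $\Leftrightarrow$ (4), (4) $\Rightarrow$ (2), (2) $\Leftrightarrow$ (3), and (2) $\Rightarrow$ (1). A preliminary observation: any finite $\Bool$-module is a finite join-semilattice with $0$, and any finite join-semilattice with $0$ automatically carries a lattice structure with $x \wedge y := \bigvee\{c \in M : c \le x,\, c \le y\}$, so $M^{\wedge}$ is intrinsically determined by the module structure on $M$.

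For (1) $\Leftrightarrow$ (4) I would invoke Birkhoff's representation theorem for finite distributive lattices: a finite lattice is distributive if and only if it is isomorphic to the lattice of down-sets of a finite poset. Explicitly, let $J(M)$ be the poset of join-irreducible elements of $M^{\wedge}$ (ordered by restriction of $\le$), and equip $J(M)$ with the Alexandrov topology whose open sets are the down-sets. Then $m \mapsto \{p \in J(M) : p \le m\}$ is the desired isomorphism $M^{\wedge} \cong \mcU(J(M))$. Conversely, $\mcUX$ is closed under $\cup$ and $\cap$ inside the powerset $\mathscr{P}(X)$, so it inherits the distributive law from the Boolean lattice $\mathscr{P}(X)$.

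For (4) $\Rightarrow$ (2), I would use that a finite topological space $X$ is Alexandrov (arbitrary intersections of open sets are open), so each point $x \in X$ has a smallest open neighborhood $U_x$. Then $\mcUX$ embeds into $\mathscr{P}(X) \cong \Bool^{|X|}$ via the characteristic-function inclusion $\iota$, and the $\Bool$-linear map $r : \mathscr{P}(X) \to \mcUX$ determined by $r(\{x\}) = U_x$ satisfies $r(U) = \bigcup_{x \in U} U_x = U$ for every open $U$, hence is a retraction. The equivalence (2) $\Leftrightarrow$ (3) is categorical boilerplate: free $\Bool$-modules are projective, retracts of projectives are projective, and conversely any finite $M$ admits a surjection from $\Bool^{|M|}$ (sending standard basis elements to the elements of $M$) which splits by the lifting property.

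The remaining implication (2) $\Rightarrow$ (1) is where distributivity has to be transferred through a retract, and is the step I expect to require the most care. Given $\iota : M \hookrightarrow \Bool^n$ and $r : \Bool^n \twoheadrightarrow M$ with $r\iota = \id_M$, both maps preserve $\vee$ and $0$ and are therefore order-preserving, and $\iota$ is an order embedding. A short comparison argument (every $\iota$-image of a lower bound for $x$ and $y$ in $M$ is a lower bound for $\iota(x)$ and $\iota(y)$ in $\Bool^n$, and $r$ takes lower bounds to lower bounds) yields the formula $x \wedge_M y = r\bigl(\iota(x) \wedge \iota(y)\bigr)$. Using this together with distributivity of $\Bool^n$ and the fact that $r$ preserves $\vee$,
\[
(x \wedge y) \vee (x \wedge z) \;=\; r\bigl(\iota(x) \wedge \iota(y)\bigr) \vee r\bigl(\iota(x) \wedge \iota(z)\bigr) \;=\; r\bigl(\iota(x) \wedge (\iota(y) \vee \iota(z))\bigr) \;=\; x \wedge (y \vee z),
\]
which closes the cycle. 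The most substantive input is Birkhoff's theorem in (1) $\Leftrightarrow$ (4); the main technical obstacle within the rest of the argument is verifying that meets in $M$ are computed as $r$ applied to meets in $\Bool^n$, since $r$ a priori only preserves joins.
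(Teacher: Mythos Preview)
Your argument is correct. The cycle you run is clean: Birkhoff's theorem handles (1) $\Leftrightarrow$ (4), the minimal-open-neighborhood retraction gives (4) $\Rightarrow$ (2), the standard splitting argument gives (2) $\Leftrightarrow$ (3), and your formula $x \wedge_M y = r(\iota(x)\wedge\iota(y))$ is verified exactly as you indicate (both inequalities follow from $r,\iota$ being order-preserving and $r\iota=\id_M$), after which distributivity transfers as written.

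By way of comparison, the paper does not actually prove this proposition: it simply cites \cite[Section 3.2]{IK-top-automata} for a discussion and further references. So your write-up is more self-contained than what appears here. The paper does, a few paragraphs later, spell out the same retraction $\iota(U)=\sum_{x\in U}x$, $p(x)=U_x$ that you use for (4) $\Rightarrow$ (2), so on that step you and the authors are aligned. Your (2) $\Rightarrow$ (1) argument---pulling the meet back through the retract---is a nice direct route that avoids re-invoking Birkhoff and is worth keeping.
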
 
$M$ is a \emph{retract} of a free $\Bool$-module if there are module maps $M\stackrel{\iota}{\lra}\Bool^n\stackrel{p}{\lra} M$ such that $p\circ \iota = \id_M$. 

We refer to~\cite[Section 3.2]{IK-top-automata} for a discussion of this proposition and more references on $\Bool$-modules. The tensor product $M\otimes N$ of two $\Bool$-modules has good behavior when one of $M,N$ is a projective $\Bool$-module. 

\vspace{0.07in} 

Let $X$ be a finite topological space. The set $\mcUX$ of open subsets of $X$ is naturally a finite distributive lattice, with join and meet operations $U\vee V= U\cup V, U\wedge V = U \cap V$, the empty set $\emptyset $ as the minimal element $0$ and  $X$ as the largest element $1$. 
Viewing $\mcUX$ as a finite $\Bool$-module, the addition is $U+V := U \cup V$.

Let $M$ be a finite projective $\Bool$-module. The proposition say, in particular, that $M\cong \mcUX$, for some finite topological space  $X$.

For $x\in X$ denote by $U_x$ the smallest open set that contains $x$. If $U_x=U_y$ for some $x,y\in X$, one of $x,y$ can be removed from $X$ without changing the lattice $\mcUX$, so we can assume $U_x\not= U_y$ for $x\not= y $ in $X$. Let us call an $X$ with this property a \emph{minimal} topological space and consider from now on only minimal $X$. 

A nonzero element $u$ of a $\Bool$-module $M$ is called \emph{irreducible} if $u=u_1+u_2$ implies that $u_1=u$ or $u_2=u$. Denote by $\irr(M)$ the set of irreducible elements of $M$. 

The set $\irr(\mcUX)$ consists of elements $U_x,x\in X$, 
\begin{equation}
\irr(\mcUX) \ = \ \{ U_x | x\in X\}. 
\end{equation} 
In particular, irreducibles in $\mcUX$ are in a bijection with points of $X$. Inclusion and projection maps 
\begin{equation}
\mcUX \stackrel{\iota}{\lra} \Bool X \stackrel{p}{\lra} \mcUX
\end{equation}
are given by 
\begin{equation}
\iota(U) = \sum_{x\in U} x, \hspace{0.75cm}  p(x) = U_x, 
\end{equation}
and $p \, \iota = \id_{\mcUX}$. 

\vspace{0.1in} 

{\it Duality.}
We continue to assume that $X$ is a minimal topological space, so that irreducibles in $\mcUX$ are in a bijection with points of $X$. Denote by $X^{\ast}$ the \emph{dual} topological space of $X$. It has the same underlying set of points and a set $V$ is open in $X^{\ast}$ if and only if it is closed in $X$. 
Thus, open sets of $X^{\ast}$ are complements of open sets of $X$. 
Irreducible elements $\irr(\mcUX^{\ast})$ are in a bijection with elements of $X$ and consist of minimal closed subsets $V_x$ of $X$ that contain $x$, one for each $x\in X$. 

There are evaluation and coevaluaton maps 
\begin{eqnarray}\label{eq_pairing_coev}
\mathsf{coev}_X & : &  \Bool \lra \mcUX \otimes \mcU(X^{\ast}), \hspace{0.75cm}
1 \longmapsto \sum_{x\in X} U_x\otimes V_x, \\ \label{eq_pairing_ev}
\mathsf{ev}_X & : & \mcU(X^{\ast}) \otimes \mcUX \lra \Bool, \hspace{0.75cm}  
V\otimes U \longmapsto \delta_{U\cap V}, 
\end{eqnarray} 
Here $\delta_{W}=1$ if set $W$ is nonempty and $\delta_{\emptyset}=0$. It is straightforward to check that these maps satisfy the deformation relations in Figure~\ref{figure-0.1} bottom row, see also \eqref{eq_two_isot_2}, where $M$ should be replaced by $\mcUX$ and $M^{\ast}$ by $\mcU(X^{\ast})$. 
Consequently, $\mcU(X^{\ast})$ is naturally isomorphic to the dual semimodule of $\mcUX$: 
\begin{equation}\label{eq_dualityX}
 \mcU(X^{\ast})  \ \cong \ \mcUX^{\ast}.
\end{equation}

\begin{corollary} A finite projective $\Bool$-module $M$ defines a symmetric monoidal functor 
\begin{equation}
\mcF_M  \ : \ \Cob \lra \Bool\dmod
\end{equation} 
taking objects $+$ and $-$ to $M$ and $M^{\ast}$ respectively, and cup and cap to $\coev_X$ and $\ev_X$ under a fixed isomorphism $M\cong \mcUX$ for a finite topological space $X$. 
\end{corollary}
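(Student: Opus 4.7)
The plan is to exhibit this corollary as a direct application of Proposition~\ref{prop_classify2} to the commutative semiring $R = \Bool$. First I would observe that a finite projective $\Bool$-module $M$ is in particular finitely-generated projective in the sense of~\eqref{eq_i_p_2}: by the stated equivalence $M$ is a retract of some $\Bool^n$. Proposition~\ref{prop_classify2} then guarantees a symmetric monoidal functor $\mcF_M : \Cob \lra \Bool\dmod$ with $\mcF_M(+) = M$ and $\mcF_M(-) = M^{\ast}$, uniquely determined up to isomorphism by the choice of evaluation and coevaluation maps $\cap_M, \cup_M$ satisfying the zig-zag relations \eqref{eq_two_isot_2}. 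The content of the corollary is that, under the identification $M \cong \mcUX$ (with $X$ a minimal finite topological space), these abstract maps coincide with the concrete $\ev_X, \coev_X$ given in \eqref{eq_pairing_coev}--\eqref{eq_pairing_ev}.

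Next I would identify $M^{\ast}$ with $\mcU(X^{\ast})$ as asserted in \eqref{eq_dualityX}, and verify the two zig-zag relations directly from the definitions. The key geometric fact is that, for a finite minimal $X$, the dual irreducibles $V_x \in \mcU(X^{\ast})$ are exactly the minimal closed sets $\overline{\{x\}}$, so $y \in V_x \Longleftrightarrow x \in U_y$. Using this, for any open $U \in \mcUX$,
\[
(\id_M \otimes \ev_X)(\coev_X \otimes \id_M)(U)
= \sum_{x \in X} U_x \cdot \delta_{V_x \cap U}
= \sum_{x \in U} U_x = U,
\]
since $V_x \cap U \neq \emptyset$ iff $x \in U$, and any open set is the union of the minimal opens $U_x$ it contains. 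The symmetric computation for $V \in \mcU(X^{\ast})$ yields the second zig-zag. This simultaneously proves that $\mcU(X^{\ast})$ is canonically the dual $\Bool$-module of $\mcUX$ and that $\ev_X, \coev_X$ are the maps demanded by Proposition~\ref{prop_classify2}.

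Having matched the data, the existence and symmetric monoidal coherence of $\mcF_M$ follows formally from Proposition~\ref{prop_classify2}, and independence from the choice of the isomorphism $M \cong \mcUX$ comes from the uniqueness part of that proposition: any two realizations of $M$ as a lattice of opens $\mcUX$ produce the same pair $(\ev_M, \coev_M)$ on $M \otimes M^{\ast}$. The main point of care, and arguably the only nontrivial step, is the specialization-order computation showing $V_x \cap U \neq \emptyset \Leftrightarrow x \in U$; everything else is bookkeeping inherited from the already-established classification.
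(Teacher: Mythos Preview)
Your proposal is correct and follows essentially the same approach as the paper. The paper does not give a separate proof of this corollary: it simply states, just before the corollary, that ``it is straightforward to check that these maps satisfy the deformation relations in Figure~\ref{figure-0.1} bottom row,'' deduces $\mcU(X^{\ast}) \cong \mcUX^{\ast}$, and then states the corollary. You have made this explicit by invoking Proposition~\ref{prop_classify2} and writing out the key zig-zag computation, including the specialization-order fact $V_x \cap U \neq \emptyset \Leftrightarrow x \in U$; this is exactly the ``straightforward check'' the paper omits.
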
 
Of course, $X$ is determined by $M$ up to an isomorphism. 
For any sign sequence $\varepsilon$ the $\Bool$-module $\mcF(\varepsilon)$ is projective, isomorphic to the tensor product of projective modules $M$ and $M^{\ast}$.
A circle evaluates to $1\in \Bool$ under this functor, for any nonzero $M$. For an additional discussion of duality we refer to~\cite[Section 3.2]{IK-top-automata}. 

\vspace{0.1in} 

{\it Adding endpoints.}
Given $X$ as above, we can enhance category $\Cob$ to a  category denoted $\Cob^X_{\I}$ by allowing inner endpoints labelled by elements of $X$, see Figure~\ref{figure-D1}. A floating interval then carries two labels from $X$, one for each endpoint. 

\vspace{0.1in} 

\begin{figure}
    \centering
\begin{tikzpicture}[scale=0.6]
\begin{scope}[shift={(0,0)}]
\node at (0.5,3.35) {$+$};
\draw[thick,dashed] (0,3) -- (1,3);
\draw[thick,dashed] (0,0) -- (1,0);

\draw[thick] (0.5,2.20) -- (0.5,3);
\draw[thick,->] (0.5,1.25) -- (0.5,2.25);
\node at (0.5,0.9) {$x$};

\node at (3,3) {$\mcUX$};
\draw[thick,->] (3,0.5) -- (3,2.5);
\node at (3,0) {$\Bool$};

\node at (4.5,3) {$\ni$};
\node at (4.25,-0.05) {$\ni$};

\node at (5.5,3) {$U_x$};
\draw[thick,|->] (5.5,0.5) -- (5.5,2.5);
\node at (5.5,-0.05) {$1$};
\end{scope}

\begin{scope}[shift={(9.5,0)}]
\node at (0.5,3.35) {$-$};
\draw[thick,dashed] (0,3) -- (1,3);
\draw[thick,dashed] (0,0) -- (1,0);

\draw[thick,<-] (0.5,1.80) -- (0.5,3);
\draw[thick] (0.5,1.25) -- (0.5,1.85);
\node at (0.5,0.9) {$y$};
\node at (3,3) {$\mcU(X^{\ast})$};
\draw[thick,->] (3,0.5) -- (3,2.5);
\node at (3,0) {$\Bool$};

\node at (4.5,3) {$\ni$};
\node at (4.25,-0.05) {$\ni$};

\node at (5.5,3) {$V_x$};
\draw[thick,|->] (5.5,0.5) -- (5.5,2.5);
\node at (5.5,-0.05) {$1$};
\end{scope}

\begin{scope}[shift={(19,0)}]
\draw[thick,dashed] (0,3) -- (1,3);
\draw[thick,dashed] (0,0) -- (1,0);

\node at (1.05,2.25) {$y$};
\draw[thick,->] (0.5,0.75) -- (0.5,2.25);
\node at (1.05,0.75) {$x$};
\node at (1.85,1.5) {$=$};

\node at (3.75,1.5) {$\delta_{U_x\cap V_y}$};

\end{scope}

\end{tikzpicture}
    \caption{Inner endpoints with in and out orientations labelled by points $x,y\in X$. In endpoint labelled $x$ denotes the element $U_x$ of $\mcUX$, while an out endpoint labelled $y$ denotes the functional on $\mcUX$ given by intersecting an open set in $X$ with the closed set $V_x$. A floating interval with $x,y$ endpoint labels evaluates to $1$ if and only if $U_x$ and $V_y$ have a nonempty intersection (notation $\delta_W$ denotes $1$ if the set $W$ is nonempty and $0$ if $W=\emptyset$).  }
    \label{figure-D1}
\end{figure}
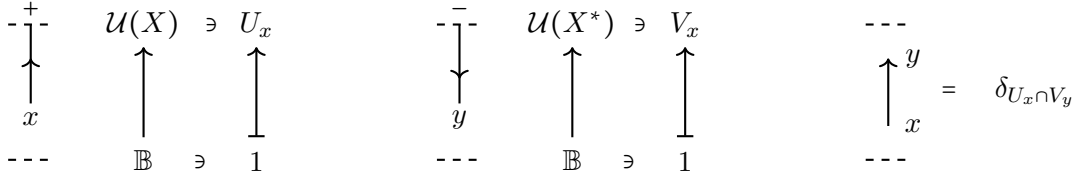

Functor $\mcF_{\mcUX}$ extends to a functor 
\begin{equation}
\mcF_{\mcUX} \ : \ \Cob^X_{\I}\lra \Bool\dmod
\end{equation} 
(keeping the same notation for the functor) 
that takes a half-interval with an endpoint labelled $x$ to $U_x\in \mcUX$ and, for the opposite orientation, taking $U$ to $\delta_{U,V_x}$, see Figure~\ref{figure-D1}. A floating interval with in, respectively out endpoint labelled $x,$ respectively $y$, evaluates to $\delta_{V_x,U_y}$, that is, to $1$ if and only if the smallest open set that contains $y$ intersects nontrivially the smallest closed set that contains $x$. 

It makes sense to simplify the notation and denote the functor $\mcF_{\mcUX}$ by $\mcF_X$, for short. 


\subsection{Extending to one-foams}\label{subsec_foams}
$\quad$ 
\vspace{0.05in} 

{\it Multiplication on $\mcUX$ and foams.} Intersection of sets give rise to a $\Bool$-module map 
\begin{equation}\label{eq_multX}
\mcUX \otimes \mcUX \ \stackrel{m}{\lra} \ \mcUX,  \hspace{1cm}  U\cdot V := U \cap V. 
\end{equation} 
This map is well-defined due to distributivity of the intersection over union, $U\cdot (V_1+V_2)=U\cdot V_1+ U \cdot V_2$. It makes $\mcUX$ into an associative commutative unital semialgebra over $\Bool$. The unit element $1$ of $\mcUX$ is given by the biggest open set $X$, since $X\cdot U = U$ for $U\in \mcUX$. 

Consider the multiplication on the dual topological space $\mcU(X^{\ast})\otimes \mcU(X^{\ast})\lra \mcU(X^{\ast})$. Dualizing this multiplication via $\Hom_{\Bool\dmod}(\ast,\Bool)$ and isomorphism \eqref{eq_dualityX} gives a commutative coassociative comultiplication  
\begin{equation}
\mcUX \ \stackrel{\Delta}{\lra} \ \mcUX \otimes \mcUX 
\end{equation} 
with the counit map 
\begin{equation} 
\epsilon: \mcUX\lra \Bool, \ \ \epsilon(U)=1 \ \text{ if and only if } \ U\not= \emptyset. 
\end{equation}
The formula for comultiplication: 
\begin{equation}\label{eq_comult} 
\Delta(U_x) := U_x\otimes U_x, \ \  x\in X, \ \ 
\Delta(U) := \sum_{x| U_x\subset U}\Delta(U_x) = \sum_{x| U_x\subset U} U_x\otimes U_x,
\end{equation} 
where, for instance, the first sum is over $x$ such that $U_x\subset U$.
Irreducible elements $U_x$ of $\mcUX$ are sent by $\Delta$ to their tensor squares, and then the map is extended to all open sets $U$ by summing over applications of this map to all irreducibles in $U$.   
To check that $\Delta$ is indeed dual to multiplication $m$ in $\mcU(X^{\ast})$, pick open sets $U_1,U_2$ and a closed set $V$ in $X$. Then the pairing \eqref{eq_pairing_ev} computes 
\begin{equation} \label{eq_pairing} 
(V,U_1\cdot  U_2) =\delta_{ V\cap U_1 \cap U_2 } = \sum_{x|V_x\subset V} \delta_{V_x,U_1}\delta_{V_x,U_2}
=(\Delta(V),U_1\otimes U_2). 
\end{equation}
where $V_x$ is the smallest closed set in $X$ that contains $x$. We see that the multiplication \eqref{eq_multX} on $\mcUX$ is dual to the comultiplication on $\mcU(X^{\ast})$ with respect to the pairing \eqref{eq_pairing_ev}. 

We can now extend our graphical calculus by adding oriented 3-valent vertices of two kinds, see Figure~\ref{figure-D2}, to denote multiplication and comultiplication on $\mcUX$, and inner endpoints to denote the unit element $X\in \mcUX$ and the trace $\varepsilon$, which dualizes to the unit element in $\mcU(X^{\ast})$. 

\vspace{0.1in} 

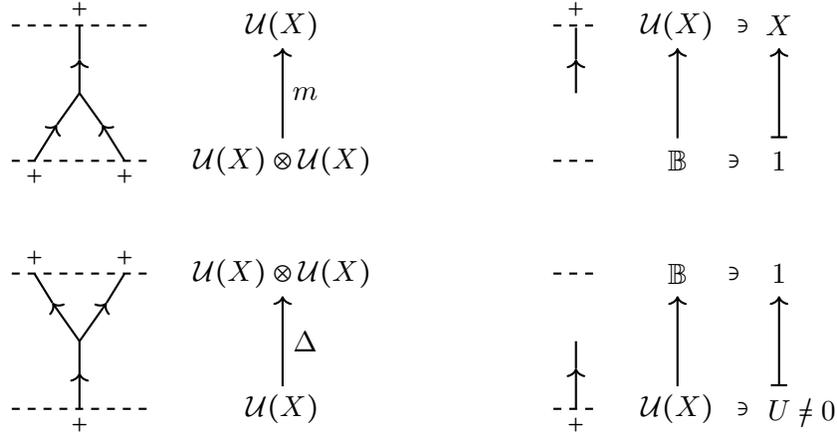
\begin{figure}
    \centering
\begin{tikzpicture}[scale=0.6]
\begin{scope}[shift={(0,0)}]
\draw[thick,dashed] (0,3) -- (3,3);
\draw[thick,dashed] (0,0) -- (3,0);
\node at (0.5,-0.35) {$+$};
\node at (2.5,-0.35) {$+$};
\node at (1.5,3.35) {$+$};

\draw[thick] (1.5,2.25) -- (1.5,3);
\draw[thick,->] (1.5,1.5) -- (1.5,2.25);

\draw[thick,->] (0.5,0) -- (1,0.80);
\draw[thick] (1,0.80) -- (1.5,1.5);

\draw[thick,->] (2.5,0) -- (2,0.80);
\draw[thick] (2,0.80) -- (1.5,1.5);

\node at (6,3) {$\mcUX$};
\draw[thick,->] (6,0.5) -- (6,2.5);
\node at (6.5,1.5) {$m$};
\node at (6,0) {$\mcUX \otimes \mcUX$};
\end{scope}

\begin{scope}[shift={(12,0)}]
\draw[thick,dashed] (0,3) -- (1,3);
\draw[thick,dashed] (0,0) -- (1,0);
\node at (0.5,3.35) {$+$};
\draw[thick,->] (0.5,1.5) -- (0.5,2.25);
\draw[thick] (0.5,2.25) -- (0.5,2.95);

\node at (2.75,3) {$\mcUX$};
\draw[thick,->] (2.75,0.5) -- (2.75,2.5);
\node at (2.75,0) {$\Bool$};

\node at (4.20,3) {$\ni$};
\node at (4.00,0) {$\ni$};

\node at (5,3) {$X$};
\draw[thick,|->] (5,0.5) -- (5,2.5);
\node at (5,0) {$1$};
\end{scope}

\begin{scope}[shift={(0,-5.5)}]
\draw[thick,dashed] (0,3) -- (3,3);
\draw[thick,dashed] (0,0) -- (3,0);
\node at (0.5,3.35) {$+$};
\node at (2.5,3.35) {$+$};
\node at (1.5,-0.35) {$+$};

\draw[thick,<-] (1.5,0.80) -- (1.5,0);
\draw[thick] (1.5,1.5) -- (1.5,0.79);

\draw[thick] (0.5,3) -- (0.9,2.35);
\draw[thick,<-] (0.9,2.35) -- (1.5,1.5);

\draw[thick] (2.5,3) -- (2.1,2.35);
\draw[thick,<-] (2.1,2.35) -- (1.5,1.5);

\node at (6,0) {$\mcUX$};
\draw[thick,->] (6,0.5) -- (6,2.5);
\node at (6.5,1.5) {$\Delta$};
\node at (6,3) {$\mcUX \otimes \mcUX$};
\end{scope}

\begin{scope}[shift={(12,-5.5)}]
\draw[thick,dashed] (0,3) -- (1,3);
\draw[thick,dashed] (0,0) -- (1,0);
\node at (0.5,-0.35) {$+$};
\draw[thick,->] (0.5,0) -- (0.5,0.90);
\draw[thick] (0.5,0.90) -- (0.5,1.5);

\node at (2.75,3) {$\Bool$};
\draw[thick,->] (2.75,0.5) -- (2.75,2.5);
\node at (2.75,0) {$\mcUX$};

\node at (4.00,3) {$\ni$};
\node at (4.20,0) {$\ni$};

\node at (5,3) {$1$};
\draw[thick,|->] (5,0.5) -- (5,2.5);
\node at (5.5,0) {$U\not= 0$};
\end{scope}

\end{tikzpicture}
    \caption{Multiplication, comultiplication, unit, and counit diagrams for $\mcUX$.  }
    \label{figure-D2}
\end{figure}

A three-valent vertex either have two edges oriented in and one edge oriented out or two edges oriented out and one edge oriented in, describing multiplication, respectively comultiplication on $\mcUX$, see Figure~\ref{figure-D2}.

Orientation reversal corresponds to switching between $X$ and $X^{\ast}$, and the relevant duality relations (relating multiplication $m_X$ in $\mcUX$ with comultiplication $\Delta_{X^{\ast}}$ in $\mcU(X^{\ast})$, and likewise for $\Delta_X$ and $m_{X^{\ast}}$) are shown in Figure~\ref{figure-D3}.

\vspace{0.1in}

\begin{figure}
    \centering
\begin{tikzpicture}[scale=0.6]
\begin{scope}[shift={(0,0)}]
\draw[thick,dashed] (0,3) -- (3,3);
\draw[thick,dashed] (0,0) -- (3,0);
\node at (0.5,-0.35) {$+$};
\node at (1.5,-0.35) {$+$};
\node at (2.5,-0.35) {$-$};

\draw[thick,->] (0.5,0) -- (0.75,0.50);
\draw[thick] (0.75,0.50) -- (1,1);

\draw[thick] (1,1) -- (1.25,0.50);
\draw[thick,<-] (1.25,0.50) -- (1.5,0);

\draw[thick,->] (1,1) .. controls (1.1,2) and (2.15,2) .. (2.25,1);
\draw[thick] (2.25,1) .. controls (2.3,0.85) and (2.45,0.5) .. (2.5,0);

\node at (3.5,1.5) {$=$};
\end{scope}

\begin{scope}[shift={(4,0)}]
\draw[thick,dashed] (0,3) -- (3,3);
\draw[thick,dashed] (0,0) -- (3,0);
\node at (0.5,-0.35) {$+$};
\node at (1.5,-0.35) {$+$};
\node at (2.5,-0.35) {$-$};

\draw[thick,->] (2.5,1) -- (2.5,0.50);
\draw[thick] (2.5,0.50) -- (2.5,0);

\draw[thick,->] (1.5,0) -- (1.75,1.2);
\draw[thick] (1.75,1.2) .. controls (1.85,1.7) and (2.4,1.5) .. (2.5,1);


\draw[thick,->] (0.5,0) .. controls (0.6,1.85) and (1.15,1.85) .. (1.25,2);

\draw[thick] (1.2,1.95) .. controls (1.5,2.5) and (3.5,2.5) .. (2.5,1);
\end{scope}

\begin{scope}[shift={(10,0)}]
\draw[thick,dashed] (0,3) -- (3,3);
\draw[thick,dashed] (0,0) -- (3,0);
\node at (0.5,3.35) {$+$};
\node at (1.5,3.35) {$+$};
\node at (2.5,3.35) {$-$};

\draw[thick] (0.5,3) -- (0.6,2.73);
\draw[thick,<-] (0.6,2.73) -- (1,2);

\draw[thick,->] (1,2) -- (1.4,2.73);
\draw[thick] (1.4,2.73) -- (1.5,3);

\draw[thick] (1,2) .. controls (1.1,1) and (2.15,1) .. (2.25,2);
\draw[thick,<-] (2.25,2) .. controls (2.3,2.15) and (2.45,2.5) .. (2.5,3);

\node at (3.5,1.5) {$=$};
\end{scope}

\begin{scope}[shift={(14,0)}]
\draw[thick,dashed] (0,3) -- (3,3);
\draw[thick,dashed] (0,0) -- (3,0);
\node at (0.5,3.35) {$+$};
\node at (1.5,3.35) {$+$};
\node at (2.5,3.35) {$-$};

\draw[thick] (2.5,2) -- (2.5,2.50);
\draw[thick,<-] (2.5,2.50) -- (2.5,3);
 
\draw[thick] (1.5,3) -- (1.65,2.2);
\draw[thick,<-] (1.65,2.2) .. controls (1.85,1.5) and (2.4,1.5) .. (2.5,2);

\draw[thick] (0.5,3) -- (0.6,2);

\draw[thick,<-] (0.6,2) .. controls (0.7,0.5) and (3.5,0.5) .. (2.5,2);
\end{scope}

\end{tikzpicture}
    \caption{ The duality relation between  multiplication and comultiplication for $\mcUX$ and $\mcU(X^{\ast})$. }
    \label{figure-D3}
\end{figure}

Associative commutative unital $\Bool$-algebra relations on $\mcUX$ are shown in Figure~\ref{figure-D4}. The same relations with all orientations reversed hold as well, see Figure~\ref{figure-D4.1} where these relations are also rotated. These correspond to the coassociative cocommutative counital $\Bool$-coalgebra structure on $\mcUX$ or, equivalently, to the algebra structure on the dual module $\mcU(X^{\ast})$. Additionally, the relation in Figure~\ref{figure-D5} holds.  

\vspace{0.1in}

\begin{figure}
    \centering
\begin{tikzpicture}[scale=0.6]

\begin{scope}[shift={(0,0)}]
\node at (0.25,-0.35) {$+$};
\node at (1.5,-0.35) {$+$};
\node at (2.75,-0.35) {$+$};
\node at (1.5,3.35) {$+$};
\draw[thick,dashed] (0,3) -- (3,3);
\draw[thick,dashed] (0,0) -- (3,0);

\draw[thick,->] (1.5,1.5) -- (1.5,2.25);
\draw[thick] (1.5,2.25) -- (1.5,3);

\draw[thick] (1.5,1.5) -- (2,0.9);
\draw[thick,<-] (1.97,0.95) -- (2.75,0);

\draw[thick,->] (0.25,0) -- (0.55,0.35);
\draw[thick,->] (0.5,0.3) -- (1.25,1.20);
\draw[thick] (1.2,1.15) -- (1.5,1.5);

\draw[thick] (0.85,0.75) -- (1.105,0.457);
\draw[thick,<-] (1.10,0.46) -- (1.5,0);

\node at (3.5,1.5) {$=$};
\end{scope}

\begin{scope}[shift={(4,0)}]
\node at (0.25,-0.35) {$+$};
\node at (1.5,-0.35) {$+$};
\node at (2.75,-0.35) {$+$};
\node at (1.5,3.35) {$+$};
\draw[thick,dashed] (0,3) -- (3,3);
\draw[thick,dashed] (0,0) -- (3,0);

\draw[thick,->] (1.5,1.5) -- (1.5,2.25);
\draw[thick] (1.5,2.25) -- (1.5,3);

\draw[thick] (1.5,1.5) -- (1,0.9);
\draw[thick,<-] (1.03,0.95) -- (0.25,0);

\draw[thick,->] (2.75,0) -- (2.45,0.35);
\draw[thick,->] (2.5,0.3) -- (1.75,1.20);
\draw[thick] (1.8,1.15) -- (1.5,1.5);

\draw[thick] (2.15,0.75) -- (1.895,0.457);
\draw[thick,<-] (1.90,0.46) -- (1.5,0);
\end{scope}

\begin{scope}[shift={(11,0)}]
\node at (0.5,-0.35) {$+$};
\node at (1.5,-0.35) {$+$};
\node at (1,3.35) {$+$};
\draw[thick,dashed] (0,3) -- (2,3);
\draw[thick,dashed] (0,0) -- (2,0);

\draw[thick] (1,3) -- (1,2.65);
\draw[thick,<-] (1,2.65) -- (1,2.25);

\draw[thick] (0.5,1.75) .. controls (0.6,2.25) and (0.9,2.25) .. (1,2.25);
\draw[thick] (1.5,1.75) .. controls (1.4,2.25) and (1.1,2.25) .. (1,2.25);

\draw[thick,<-] (0.5,1.75) .. controls (0.6,1) and (1.4,1) .. (1.5,0);
\draw[thick,->] (0.5,0) .. controls (0.6,1) and (1.4,1) .. (1.5,1.75);

\node at (2.5,1.5) {$=$};
\end{scope}

\begin{scope}[shift={(14,0)}]
\node at (0.5,-0.35) {$+$};
\node at (1.5,-0.35) {$+$};
\node at (1,3.35) {$+$};
\draw[thick,dashed] (0,3) -- (2,3);
\draw[thick,dashed] (0,0) -- (2,0);

\draw[thick] (1,3) -- (1,2.47);
\draw[thick,<-] (1,2.5) -- (1,1.5);

\draw[thick] (0.5,0.85) .. controls (0.6,1.5) and (0.9,1.5) .. (1,1.5);
\draw[thick] (1.5,0.85) .. controls (1.4,1.5) and (1.1,1.5) .. (1,1.5);

\draw[thick,<-] (0.5,0.85) -- (0.5,0);
\draw[thick,<-] (1.5,0.85) -- (1.5,0);
\end{scope}

\begin{scope}[shift={(20,0)}]
\node at (0.25,-0.35) {$+$};
\node at (1,3.35) {$+$};
\draw[thick,dashed] (0,3) -- (2,3);
\draw[thick,dashed] (0,0) -- (2,0);

\draw[thick] (1,3) -- (1,2.25);
\draw[thick,<-] (1,2.255) -- (1,1.5);

\draw[thick,<-] (1.35,1.05) -- (1.75,0.5);
\draw[thick] (1,1.5) -- (1.355,1.04);

\draw[thick] (1,1.5) -- (0.75,1.02);
\draw[thick,<-] (0.75,1.03) -- (0.25,0);

\node at (2.5,1.5) {$=$};
\end{scope}

\begin{scope}[shift={(23,0)}]
\node at (0.5,3.35) {$+$};
\node at (0.5,-0.35) {$+$};
\draw[thick,dashed] (0,3) -- (1,3);
\draw[thick,dashed] (0,0) -- (1,0);

\draw[thick,->] (0.5,0) -- (0.5,1.5);
\draw[thick] (0.5,1.47) -- (0.5,3);
\end{scope}

\end{tikzpicture}
    \caption{Associativity, commutativity, and the unit relations in $\mcUX$.  }
    \label{figure-D4}
\end{figure}
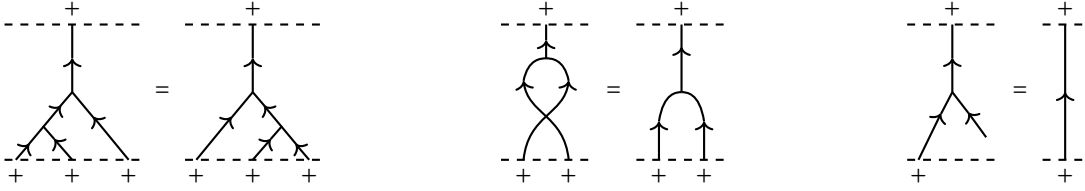

\begin{figure}
    \centering
\begin{tikzpicture}[scale=0.6]
\begin{scope}[shift={(0,0)}]
\node at (0.25,3.35) {$+$};
\node at (1.5,3.35) {$+$};
\node at (2.75,3.35) {$+$};
\node at (1.5,-0.35) {$+$};
\draw[thick,dashed] (0,3) -- (3,3);
\draw[thick,dashed] (0,0) -- (3,0);

\draw[thick] (1.5,1.5) -- (1.5,0.75);
\draw[thick,<-] (1.5,0.75) -- (1.5,0);

\draw[thick,->] (1.5,1.5) -- (2,2.1);
\draw[thick] (1.97,2.05) -- (2.75,3);

\draw[thick] (0.25,3) -- (0.55,2.65);
\draw[thick,<-] (0.5,2.7) -- (1.25,1.800);
\draw[thick,<-] (1.2,1.85) -- (1.5,1.5);

\draw[thick,->] (0.85,2.25) -- (1.30,2.77);
\draw[thick] (1.27,2.745) -- (1.5,3);

\node at (3.5,1.5) {$=$};
\end{scope}

\begin{scope}[shift={(4,0)}]
\node at (0.25,3.35) {$+$};
\node at (1.5,3.35) {$+$};
\node at (2.75,3.35) {$+$};
\node at (1.5,-0.35) {$+$};
\draw[thick,dashed] (0,3) -- (3,3);
\draw[thick,dashed] (0,0) -- (3,0);

\draw[thick] (1.5,1.5) -- (1.5,0.75);
\draw[thick,<-] (1.5,0.75) -- (1.5,0);

\draw[thick,->] (1.5,1.5) -- (1,2.1);
\draw[thick] (1.03,2.05) -- (0.25,3);

\draw[thick] (2.75,3) -- (2.45,2.65);
\draw[thick,<-] (2.5,2.7) -- (1.75,1.800);
\draw[thick,<-] (1.8,1.85) -- (1.5,1.5);

\draw[thick,->] (2.15,2.25) -- (1.7,2.77);
\draw[thick] (1.73,2.745) -- (1.5,3);
\end{scope}

\begin{scope}[shift={(11,0)}]
\node at (0.5,3.35) {$+$};
\node at (1.5,3.35) {$+$};
\node at (1,-0.35) {$+$};
\draw[thick,dashed] (0,3) -- (2,3);
\draw[thick,dashed] (0,0) -- (2,0);

\draw[thick,->] (1,0) -- (1,0.5);
\draw[thick] (1,0.5) -- (1,0.75);

\draw[thick,<-] (0.5,1.5) .. controls (0.6,0.75) and (0.9,0.75) .. (1,0.75);
\draw[thick,<-] (1.5,1.5) .. controls (1.4,0.75) and (1.1,0.75) .. (1,0.75);

\draw[thick] (0.5,1.5) .. controls (0.6,2) and (1.4,2) .. (1.5,3);
\draw[thick] (0.5,3) .. controls (0.6,2) and (1.4,2) .. (1.5,1.5);

\node at (2.5,1.5) {$=$};
\end{scope}

\begin{scope}[shift={(14,0)}]
\node at (0.5,3.35) {$+$};
\node at (1.5,3.35) {$+$};
\node at (1,-0.35) {$+$};
\draw[thick,dashed] (0,3) -- (2,3);
\draw[thick,dashed] (0,0) -- (2,0);

\draw[thick,->] (1,0) -- (1,1);
\draw[thick] (1,0.97) -- (1,1.5);

\draw[thick,<-] (0.5,2.25) .. controls (0.6,1.5) and (0.9,1.5) .. (1,1.5);
\draw[thick,<-] (1.5,2.25) .. controls (1.4,1.5) and (1.1,1.5) .. (1,1.5);

\draw[thick] (0.5,2.25) -- (0.5,3);
\draw[thick] (1.5,2.25) -- (1.5,3);
\end{scope}

\begin{scope}[shift={(20,0)}]
\node at (1.75,3.35) {$+$};
\node at (1,-0.35) {$+$};
\draw[thick,dashed] (0,3) -- (2,3);
\draw[thick,dashed] (0,0) -- (2,0);

\draw[thick,->] (1,0) -- (1,1);
\draw[thick] (1,0.97) -- (1,1.5);

\draw[thick] (0.65,1.99) -- (0.25,2.5);
\draw[thick,->] (1,1.5) -- (0.65,2.00);

\draw[thick,->] (1,1.5) -- (1.5,2.5);
\draw[thick] (1.5,2.47) -- (1.75,3);

\node at (2.5,1.5) {$=$};
\end{scope}

\begin{scope}[shift={(23,0)}]
\node at (0.5,3.35) {$+$};
\node at (0.5,-0.35) {$+$};
\draw[thick,dashed] (0,3) -- (1,3);
\draw[thick,dashed] (0,0) -- (1,0);

\draw[thick,->] (0.5,0) -- (0.5,1.5);
\draw[thick] (0.5,1.47) -- (0.5,3);
\end{scope}

\end{tikzpicture}
    \caption{Coassociativity, cocomutativity and the counit relations in $\mcUX$.}
    \label{figure-D4.1}
\end{figure}

\begin{figure}
    \centering
\begin{tikzpicture}[scale=0.6]
\begin{scope}[shift={(0,0)}]

\node at (1, 4.35) {$+$};
\node at (1,-0.35) {$+$};

\draw[thick,dashed] (0,4) -- (2,4);
\draw[thick,dashed] (0,0) -- (2,0);

\draw[thick] (1,3.60) -- (1,4);
\draw[thick,->] (1,3) -- (1,3.60);

\draw[thick] (0.25,2) .. controls (0.25,2.25) and (0.25,2.75) .. (1,3);

\draw[thick] (1.75,2) .. controls (1.75,2.25) and (1.75,2.75) .. (1,3);

\draw[thick,->] (1,1) .. controls (0.25,1.25) and (0.25,1.75) .. (0.25,2);

\draw[thick,->] (1,1) .. controls (1.75,1.25) and (1.75,1.75) .. (1.75,2);

\draw[thick] (1,0.6) -- (1,1);
\draw[thick,->] (1,0) -- (1,0.6);

\node at (3,2) {$=$};
\end{scope}

\begin{scope}[shift={(4,0)}]
\node at (0.5, 4.35) {$+$};
\node at (0.5,-0.35) {$+$};

\draw[thick,dashed] (0,4) -- (1,4);
\draw[thick,dashed] (0,0) -- (1,0);

\draw[thick] (0.5,2) -- (0.5,4);
\draw[thick,->] (0.5,0) -- (0.5,2);

\end{scope}

\end{tikzpicture}
    \caption{Split-merge composition is the identity.}
    \label{figure-D5}
\end{figure}
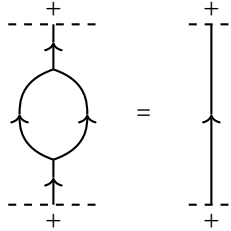

\begin{remark}  In general, $m$ and $\Delta$ do not satisfy the bialgebra axiom: $\Delta$ is not a homomorphism of semirings, $\Delta \circ m \not= (m\otimes m)\circ P_{23}\circ (\Delta\otimes \Delta)$. However,
\[
\Delta \circ m (U\otimes V) \ \le \ (m\otimes m)\circ P_{23}\circ (\Delta\otimes \Delta)(U\otimes V), 
 \ \ \forall \, U,V\in \mcUX,  
\] 
where $u\le v$ means that $u+v=v$, for elements $u,v$ of a semilattice $M$. This relation, then, holds for the operators $\Delta\circ m$ and $(\Delta\otimes \Delta)(U\otimes V)$, and we can write 
\[
\Delta\circ m + (m\otimes m)\circ P_{23}\circ (\Delta\otimes \Delta) = (m\otimes m)\circ P_{23}\circ (\Delta\otimes \Delta). 
\]
We did not look for general inequalities of this form and feel that they likely hold for random reasons and will not naturally generalize from the Boolean to more general commutative semirings. 
\end{remark}  

\vspace{0.1in} 

Motivated by these relations, one can introduce rigid symmetric monoidal category $\FCob$ with sign sequences as objects. It contains $\Cob$ as a subcategory and has additional trivalent vertex and inner endpoint generators as in Figure~\ref{figure-D2}. Relations in Figures~\ref{figure-D3}, \ref{figure-D4}, and \ref{figure-D4.1} hold, as well as the standard relations from the symmetric structure on trivalent and univalent graphs, such as sliding a disjoint line over a vertex. One may or may not impose Figure~\ref{figure-D5} relation. 

Any finite topological space $X$ defines a symmetric monoidal functor 
\[\mcF_X \ : \FCob \lra \Bool\dmod.
\] 
This functor is  given on objects by 
\[
 \mcF_X(+)= \mcUX, \ \  \mcF_X=\mcU(X^{\ast}),
\]
and on generating morphisms by the formulas in Figure~\ref{figure-D2} (also formulas \eqref{eq_multX} and \eqref{eq_comult}) and the cup and cap formulas \eqref{eq_pairing_coev} and \eqref{eq_pairing_ev}. 

It is natural to view morphisms in $\FCob$ as describing \emph{one-foams with boundary}. Two-foams appear when constructing link homology theories~\cite{Kh3,MV,RW1,RW16}. 

\vspace{0.1in} 

\begin{figure}
    \centering
\begin{tikzpicture}[scale=0.6]
\begin{scope}[shift={(0,0)}]
\draw[thick,dashed] (-1,4) -- (5,4);
\draw[thick,dashed] (-1,0) -- (5,0);

\draw[thick,->] (0.5,4) .. controls (0.6,6) and (3.4,6) .. (3.5,4);

\draw[thick,->] (1.5,4) .. controls (1.6,4.75) and (2.4,4.75) .. (2.5,4);

\draw[thick] (1.5,0) .. controls (1.6,-0.75) and (2.4,-0.75) .. (2.5,0);

\draw[thick] (0.5,0) .. controls (0.6,-2) and (3.4,-2) .. (3.5,0);

\draw[thick,->] (0.5,0) -- (0.5,0.75);
\draw[thick] (0.5,0.75) -- (0.5,1.00);


\draw[thick,->] (0.5,1) -- (0.80,1.30);
\draw[thick] (0.80,1.30) -- (1.0,1.5);

\draw[thick] (1.2,1.3) -- (1.0,1.5);

\draw[thick,<-] (1.2,1.3) .. controls (1.3,1.2) and (1.5,0.1) .. (1.5,0);

\draw[thick,->] (1.0,1.5) .. controls (1.1,2.5) and (1.4,1.5) .. (1.5,3);

\draw[thick] (1.5,3) -- (1.5,4);

\draw[thick,->] (0.5,1) -- (0.15,1.70);
\draw[thick] (0.15,1.70) -- (0,2);

\draw[thick] (0,2) .. controls (0.1,3) and (0.4,2) .. (0.5,4);

\draw[thick] (2.5,0) -- (2.6,0.25);
\draw[thick,<-] (2.6,0.25) -- (3,1);

\draw[thick] (3.5,0) -- (3.40,0.25);
\draw[thick,<-] (3.40,0.25) -- (3.00,1);

\draw[thick] (3,1) -- (3,1.35);
\draw[thick,<-] (3,1.35) -- (3,2);

\draw[thick,->] (2.5,2.5) -- (2.8,2.2);
\draw[thick] (2.8,2.2) -- (3,2);

\draw[thick] (3,2) -- (3.2,2.2);
\draw[thick,<-] (3.2,2.2) -- (3.5,2.5);

\draw[thick,->] (2.5,4) -- (2.5,3);
\draw[thick] (2.5,3) -- (2.5,2.5);

\draw[thick,->] (3.5,4) -- (3.5,3);
\draw[thick] (3.5,3) -- (3.5,2.5);

\node at (9, 5) {caps};
\node at (9,2) {trivalent vertices};
\node at (9,-1) {cups};

\end{scope}

\end{tikzpicture}
    \caption{A closed one-foam $C$ (an endomorphism of the identity object $\one$ of $\FCob$) in standard position.}
    \label{figure-G2}
\end{figure}
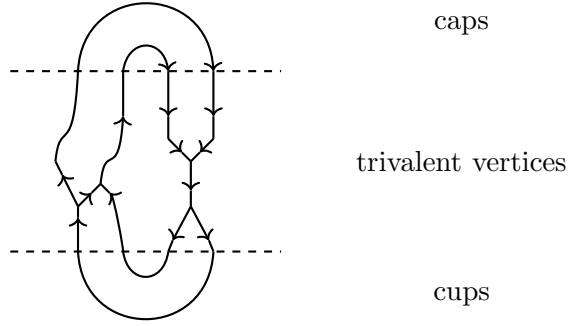

\begin{prop} Any closed diagram $C\in \End_{\FCob}(\one)$ evaluates to $1$, i.e.,  $\mcF_X(C)=1$. 
\end{prop}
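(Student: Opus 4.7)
The plan is to evaluate $\mcF_X(C)$ as a Boolean state sum over colorings of the edges of the one-foam $C$ by points of $X$, and then to exhibit one distinguished coloring whose total weight equals $1$; the idempotency $1+a=1$ of $\Bool$ then forces the sum to be $1$.

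First, put $C$ in standard Morse position, so that it is a vertical composition of the generating morphisms of $\FCob$. For any $p,q$, the intermediate tensor power $\mcUX^{\otimes p}\otimes\mcU(X^{\ast})^{\otimes q}$ is again the open-set lattice of a finite topological space, using $\mcU(X)\otimes\mcU(Y)\cong\mcU(X\times Y)$, with irreducible basis the elementary tensors $U_{x_1}\otimes\cdots\otimes U_{x_p}\otimes V_{y_1}\otimes\cdots\otimes V_{y_q}$. Since any $\Bool$-linear map between open-set lattices is determined by a Boolean matrix in these irreducible bases and composition is Boolean matrix multiplication, a slice-by-slice expansion presents $\mcF_X(C)\in\Bool$ as
\[
\mcF_X(C) \ = \ \sum_{\phi}\,\prod_v w_v(\phi),
\]
where $\phi$ ranges over assignments of a point of $X$ to every edge of $C$, $v$ ranges over the elementary pieces of $C$ (cups, caps, merges, splits, units, counits), and $w_v(\phi)\in\Bool$ is the local matrix entry determined by the generator at $v$ and the coloring of its adjacent edges.

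The local weights are read off directly from the structure maps. A split with input color $x$ and outputs $y,z$ contributes $1$ iff $x=y=z$, since $\Delta(U_x)=U_x\otimes U_x$; a merge with inputs $x,y$ and output $z$ contributes $1$ iff $z\in U_x\cap U_y$, since $m(U_x\otimes U_y)=\sum_{z\in U_x\cap U_y}U_z$; a cup with outputs $y$ on $+$ and $z$ on $-$ contributes $1$ iff $y=z$, since $\mathsf{coev}(1)=\sum_x U_x\otimes V_x$; a cap with inputs $y$ on $+$ and $z$ on $-$ contributes $\delta_{V_z\cap U_y}$; a unit contributes $1$ for every output color, since $X=\sum_xU_x$; and a counit contributes $\epsilon(U_x)=1$ for every input color.

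Finally, since $\mcUX\neq 0$ forces $X\neq\emptyset$, pick any $x_0\in X$ and take the constant coloring $\phi\equiv x_0$. Every local weight evaluates to $1$ on this coloring: the split, cup, unit, and counit weights are immediate; the merge weight is $1$ because $x_0\in U_{x_0}\cap U_{x_0}$; and the cap weight is $1$ because $x_0\in V_{x_0}\cap U_{x_0}$. Hence this single coloring contributes $1$ to the state sum, and the whole sum equals $1\in\Bool$. The only piece of bookkeeping requiring genuine care is the verification of the matrix-entry formulas for each generator in the irreducible bases of the intermediate open-set lattices; everything else follows immediately from the existence of a single good coloring together with Boolean idempotency.
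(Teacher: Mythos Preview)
Your argument is correct and follows essentially the same strategy as the paper's: both exhibit a single ``constant at $x_0$'' contribution equal to $1$ and invoke Boolean idempotency. The paper first cancels inner endpoints against adjacent trivalent vertices, places $C$ in a cups--trivalent--caps normal form, fixes one $y\in X$, restricts each cup to its $U_y\otimes V_y$ summand, and observes that merges and splits propagate $U_y$ (respectively $V_y$) unchanged so every cap pairs to $1$. Your state-sum language packages the same idea without the preliminary normal form, which is a mild gain.

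Two of your local-weight formulas are wrong for non-discrete $X$, however. The irreducibles $\{U_x\}$ generate $\mcUX$ but are not a free basis, so matrix entries must be read off via the duality pairing with the $V_y$'s rather than by ``coefficient extraction''. Doing so, the split entry at input $x$ and outputs $y,z$ is $[y\in U_x]\cdot[z\in U_x]$, not $[x=y=z]$; and the cup entry at $(y,z)$ is $[y\in U_z]$ (equivalently $[z\in V_y]$), not $[y=z]$. Your stated conditions are only correct when $X$ is discrete (the free case). This does not break the proof, since the constant coloring $x=y=z=x_0$ trivially satisfies the correct conditions too; but as written those two intermediate claims are false for general $X$, and a reader might suspect you are implicitly assuming $\mcUX$ is free.
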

\begin{proof} Any closed connected diagram $C$ can be simplified not to have inner endpoints (as on the right of Figure~\ref{figure-D2}), by canceling them against an adjacent trivalent vertex. After this simplification $C$ is either an interval (and evaluates to $1$) or it can be put in the form as shown in Figure~\ref{figure-G2}, with all cups at the bottom, all caps at the top and merge and split trivalent vertices in the middle. To evaluate this diagram one uses multiplication and comultiplication in $\mcUX$ given by \eqref{eq_multX} and \eqref{eq_comult} as well as the evaluation and coevaluation maps in equations \eqref{eq_pairing_ev} and \eqref{eq_pairing_coev}. 

Each cup in the diagram
The diagram can be evaluated from the bottom up, with each cup  contributing the sum $\sum_x U_x\otimes V_x$, see  \eqref{eq_pairing_coev}, where, recall, $U_x$, respectively $V_x$,  is the smallest open, respectively closed, set that contains $x$. Let us fix $y\in X$ and restrict to one term in the sum over all cups which is $U_y\otimes V_y$ for each cup. Evaluating the diagram for this single term, minimal open and closed sets $U_y,V_y$ propagate via multiplication and comultiplication to the tensor product of $U_y$ and $V_y$ at the ``caps'' dashed line. Coupling them in pairs via caps results in $1$, since $(U_y,V_y)=1$. 

For each $y\in X$ we obtain a term in the sum which is $1$. In $\Bool$ any sum with at least one term $1$ equals $1$. 
\end{proof}

\begin{remark} Replacing $\Bool$ by a field $\kk$ and avoiding the relation in Figure~\ref{figure-D5}, the $\kk$-linear analogue of the structure above should be a commutative associative unital finite-dimensional $\kk$-algebra $A$. 
Such an algebra defines a symmetric monoidal functor $\mcF_A$ from the cobordism category 
$\FCob$ to $\kkvect$  taking 0-manifold $(+)$ to $A$, $(-)$ to $A^{\ast}$, duality morphisms to the usual duality maps between $A$ and $A^{\ast}$, 
trivalent vertices to the multiplication map on $A$ and the dual comultiplication map coming from the isomorphism $A\cong A^{\ast}$. Univalent vertex is given by the inclusion of the unit element into $A$ and, for the opposite orientation, by the dual map $A^{\ast} \lra \kk$.
\end{remark}


\subsection{Automata on finite topological spaces (\texorpdfstring{$\Tau$}{tau}-automata)}
\label{subsec_aut_top}
Recall that a one-dimensional Boolean TQFT $\mcF:\Cob\lra \Bool\dmod$ associates a finite projective $\Bool$-module $P=\mcF(+)$ to the plus point and the dual module $P^{\ast}\cong \mcF(-)$ to the minus point. A finite projective $\Bool$-module $P$ is isomorphic to the module of open sets $\mcUX$ of a finite topological space $X$, with $P^{\ast}\cong \mcU(X^{\ast})$ and duality morphisms in the TQFT coming from the standard duality maps $\coev$ and $\ev$ for $\mcUX$ and $\mcU(X^\ast)$, respectively. 

Let us see how such a theory extends to a functor 
$\mcF:\CobSI\lra \Bool\dmod$, allowing cobordisms with inner endpoints and $\Sigma$-defects. A map $\Bool\lra \mcUX$ takes $1\in\Bool$ to an open set $X_{\init}\subset X$. A map $\mcUX\lra \Bool$ is determined by picking a closed set $X_{\t}$ in $X$ (equivalently, an open set $X_{\t}$ in $X^{\ast}$) and taking open $U\subset X $ to $\delta_{X_{\t}\cap U}$, see the notation in Figure~\ref{figure-D1} caption. That is, $U$ goes to $0$ if and only if it is disjoint from $X_{\t}$.

Each $a\in \Sigma$ determines an endomorphism $m_a\in \End(\mcUX)$ which sends open sets to open set and respects the union: 
\[
m_a: \mcUX \lra \mcUX, \ \ 
m_a(U_1 \cup U_2) = m_a(U_1) \cup m_a(U_2), \ \ m_a(\emptyset)=\emptyset. 
\]

An endomorphism $T$ of $\mcUX$ can be described by its action on minimal open sets $U_x$, $x\in X$. Reducing topological space $X$, if necessary, we can assume that $U_x\not= U_y$, for $x\not= y\in X$. Then $T(U_x)$ is an open set in $X$, for each $x\in X$, and $T(U_y)\subset T(U_x)$ whenever $y\in U_x$, so that $U_y\subset U_x$, so that $T$ preserves the inclusions of minimal open sets. Vice versa, an assignment $x\longmapsto T(U_x)\in \mcUX$ subject to the above condition describes an endomorphism $T$ of $\mcUX$. 

The trace of $T$ is given by (see also \eqref{eq_pairing_coev}, \eqref{eq_pairing_ev}) 
\begin{equation}\label{eq_trace_T}
\tr(T) \ = \ \ev_{X^{\ast}} \circ (T\otimes \id_{X^{\ast}})\circ \coev_X = \sum_{x\in X} \delta_{T(U_x)\cap V_x} = \sum_{x\in X} \delta(U_x\subset T(U_x)) = \sum_{x\in X} \delta(x\in T(U_x)) .  \end{equation} 
Here $\delta(A)=1$ if and only if the statement $A$ is true and $\delta_W=1$ if and only if the set $W$ is nonempty, otherwise the value of $\delta$  is $0\in \Bool$. 
Thus, the trace is $1$ if and only if for some $x\in X$ the image $T(U_x)$ has a nonempty intersection with $V_x$, the smallest closed subset of $X$ that contains $x$. This condition is equivalent to 
$U_x\subset T(U_x)$ for some $x$ and to $x\in T(U_x)$ for some $x$. 

\begin{corollary}\label{corollary_trace}
If $\tr(T)=1$, for an endomorphism $T$ of a finite projective $\Bool$-module $P$, then $\tr(T^n)=1$ for any $n\in \Z_+$. 
\end{corollary}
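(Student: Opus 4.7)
The plan is to unpack the trace formula \eqref{eq_trace_T} and exploit monotonicity of endomorphisms of $\mcUX$ on the poset of minimal open sets. Fix an isomorphism $P \cong \mcUX$ for a minimal finite topological space $X$, so that $T$ is determined by its values $T(U_x) \in \mcUX$ on the minimal open neighborhoods $U_x$, $x \in X$. The trace formula reads
\[
\tr(T) \ = \ \sum_{x\in X} \delta(x \in T(U_x)),
\]
so the hypothesis $\tr(T)=1$ is equivalent to the existence of some $x_0 \in X$ with $x_0 \in T(U_{x_0})$, i.e.\ $U_{x_0} \subseteq T(U_{x_0})$.

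Next I would record the key monotonicity property: because $T$ is a $\Bool$-module map, it preserves the addition $+ = \cup$, and $A \subseteq B$ is equivalent to $A + B = B$. Hence $A \subseteq B$ implies $T(A) + T(B) = T(A+B) = T(B)$, so $T(A) \subseteq T(B)$. In particular, from $U_{x_0} \subseteq T(U_{x_0})$ one obtains by applying $T$ iteratively an ascending chain
\[
U_{x_0} \ \subseteq \ T(U_{x_0}) \ \subseteq \ T^2(U_{x_0}) \ \subseteq \ \cdots \ \subseteq \ T^n(U_{x_0})
\]
for every $n \geq 1$.

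Finally, the containment $U_{x_0} \subseteq T^n(U_{x_0})$ gives $x_0 \in T^n(U_{x_0})$, so the $x_0$-summand in the trace expansion
\[
\tr(T^n) \ = \ \sum_{x\in X} \delta(x \in T^n(U_x))
\]
equals $1$. Since a sum in $\Bool$ containing a $1$ equals $1$, we conclude $\tr(T^n) = 1$ for all $n \in \Z_+$. There is no real obstacle here; the only subtlety worth flagging in the write-up is that the argument relies on passing to the concrete model $P \cong \mcUX$ so that ``$x \in T(U_x)$'' makes sense, but this is guaranteed by the classification of finite projective $\Bool$-modules recalled in Section~\ref{subsec_topological}.
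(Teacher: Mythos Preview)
Your proof is correct and follows exactly the same route as the paper's one-line argument: the paper simply says that $\tr(T)=1$ is equivalent to $U_x\subset T(U_x)$ for some $x$ and that ``iterating the inclusion implies the corollary.'' You have unpacked this by explicitly justifying the monotonicity of $T$ and writing out the ascending chain, which is a welcome level of detail but not a different approach.
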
 
Since $\tr(T)=1$ is equivalent to $U_x\subset T(U_x)$ for some $x\in X$, iterating the inclusion implies the corollary. 
$\square$

By a \emph{$\Tau$-automaton} or a \emph{quasi-automaton} we may call the data as above: 
\[    (X) := (X,X_{\init},X_{\t},\{m_a\}_{a\in \Sigma}).
\]
It consists of  a finite topological space $X$, an \emph{initial} open set $X_{\init}$, an \emph{accepting} closed set $X_{\t}$, and endomorphisms $m_a$ of  
$\mcUX$, for $a\in \Sigma$. 
A $\Tau$-automaton is equivalent to a one-dimensional oriented $\Bool$-valued TQFT with inner endpoints and $\Sigma$-defects. 

The interval language $L_{\I,(X)}$ of a $\Tau$-automaton $(X)$ consists of words $\omega\in \Sigma^{\ast}$ such that the intersection $X_{\t} \cap m_\omega (X_{\init})\not= \emptyset$. 
Here $m_{\omega}= m_{a_1}\cdots m_{a_n}$ for $\omega=a_1\cdots a_n$. 

The trace language $L_{\circ,(X)}$ of $(X)$ consists of words 
$\omega$ such that for some $x\in X$ the set $m_{\omega}(U_x)$ contains $x$, see \eqref{eq_trace_T}.

\begin{prop} \label{prop_only_strongly} The interval trace languages $(L_{\I,(X)},L_{\circ,(X)})$ of a $\Tau$-automaton $(X)$ are regular and the trace language $L_{\circ,(X)}$ is strongly circular. 
\end{prop}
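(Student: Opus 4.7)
The plan is to reduce to the already-proved statement that the interval and trace languages of a nondeterministic finite automaton are regular, with the trace language strongly circular. To do this, I will build an NFA $(Q)$ from the $\Tau$-automaton $(X)$ whose interval and trace languages coincide with those of $(X)$. Concretely, set $Q := X$, declare $Q_{\init} := X_{\init}$ and $Q_{\t} := X_{\t}$ as subsets of $Q=X$, and define the transition function by
\[
\delta(x,a) \ := \ m_a(U_x) \ \subset \ X \ = \ Q,
\]
using that $m_a(U_x)$ is an open subset of $X$, hence in particular a subset of $Q$.

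The key technical step is to verify, by induction on the length of $\omega$, the identity
\[
\iota\bigl(m_\omega(U)\bigr) \ = \ \sum_{x \in U} x\cdot \omega \ \ \in \ \Bool X
\]
for every open set $U\subset X$ and every word $\omega\in\Sigma^\ast$, where $\iota:\mcUX\hookrightarrow\Bool X$ is the standard inclusion $U\mapsto \sum_{x\in U}x$ and the right-hand side uses the NFA action of $\Sigma^\ast$ on $\Bool Q=\Bool X$. The base case $\omega=\emptyset$ is just $\iota(U)=\sum_{x\in U}x$. For the inductive step, one uses the two union-preservation facts: every open set satisfies $U=\bigcup_{x\in U}U_x$ (standard for a minimal finite topological space), and $m_a$ respects unions; together they give $m_a(U)=\bigcup_{x\in U}m_a(U_x)$, so $\iota(m_a(U))=\sum_{x\in U}\iota(m_a(U_x))=\sum_{x\in U}x\cdot a$ in $\Bool X$, and the induction carries through by applying this to the open set $m_{a_1}(U)$.

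With the identity in hand, both language equalities are immediate. For the interval language, $\omega\in L_{\I,(Q)}$ iff $Q_{\t}^\ast(Q_{\init}\cdot\omega)=1$, which by the identity with $U=X_{\init}$ is equivalent to $X_{\t}\cap m_\omega(X_{\init})\neq \emptyset$, the defining condition of $\omega\in L_{\I,(X)}$. For the trace language, applying the identity with $U=U_x$ together with formula \eqref{eq_trace_T} shows that $\omega\in L_{\circ,(X)}$ iff for some $x\in X$ the $x$-coordinate of $x\cdot\omega$ equals $1$, i.e. $\omega$ is the label of a cycle through $x$ in $(Q)$; this is exactly the condition $\omega\in L_{\circ,(Q)}$. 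Regularity of both languages, and strong circularity of $L_{\circ,(Q)}$, were established in Section~\ref{subset_oned_from} for NFAs, so they transfer to $(X)$.

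The main obstacle is the inductive identity above, which must correctly reconcile two quite different descriptions of the $\Sigma^\ast$-action: the external one via the endomorphisms $m_a$ of the lattice $\mcUX$ and the internal one via nondeterministic transitions on the discrete set $X$. If one preferred a semantic argument for strong circularity bypassing this NFA construction, one could instead combine the cyclicity of trace $\tr(m_{\omega_1\omega_2})=\tr(m_{\omega_2\omega_1})$ (which holds in any rigid symmetric monoidal category, hence in $\Bool\dmod$ applied to the finitely-generated projective $\mcUX$) with Corollary~\ref{corollary_trace}, giving $\tr(m_\omega^n)=\tr(m_\omega)$; but one would still need the NFA construction (or the alternative finite-state argument using that $\{m_\omega(X_{\init})\}_\omega$ is a finite subset of $\mcUX$) to obtain regularity.
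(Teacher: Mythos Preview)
Your argument is correct and takes a genuinely different route from the paper. The paper's own proof is a single sentence: strong circularity is deduced from Corollary~\ref{corollary_trace} (which gives $\tr(T)=1\Rightarrow\tr(T^n)=1$), with trace cyclicity $\tr(m_{\omega_1}m_{\omega_2})=\tr(m_{\omega_2}m_{\omega_1})$ left implicit; regularity is not argued at all and is evidently meant to follow from the finiteness of $\mcUX$. You instead manufacture an honest NFA on the underlying set $Q=X$ with $\delta(x,a)=m_a(U_x)$, prove that its interval and trace languages agree with those of $(X)$, and then import the NFA results from Section~\ref{subset_oned_from} wholesale. This is longer but buys something concrete: an explicit automaton witnessing regularity, and a uniform reduction that handles both conclusions at once rather than treating them by separate ad hoc observations. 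You already note in your final paragraph that the paper's short route via Corollary~\ref{corollary_trace} is available as an alternative for strong circularity.

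One small point to watch: your inductive identity $\iota(m_\omega(U))=\sum_{x\in U}x\cdot\omega$, together with the hint ``applying this to the open set $m_{a_1}(U)$,'' presupposes the right-action convention $m_{a_1\cdots a_n}=m_{a_n}\circ\cdots\circ m_{a_1}$. The paper writes $m_\omega=m_{a_1}\cdots m_{a_n}$ in Section~\ref{subsec_aut_top}, which under the usual reading of composition is the opposite order. This does not damage your conclusion, since regularity and strong circularity are both invariant under word reversal, but it is worth stating explicitly which convention you use and, if necessary, noting that the NFA you build recognizes $L_{\I,(X)}$ (or its reversal) accordingly.
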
 
The second statement follows from Corollary~\ref{corollary_trace}. $\square$

\vspace{0.05in} 

An automaton $(Q)$ is a special case of a $\Tau$-automaton. Nondeterministic finite state automata are precisely $\Tau$-automata for discrete topological spaces $X$. To match the definitions,  more than one initial state in a nondeterministic finite automaton is allowed.  

\vspace{0.1in} 

Categories $\CobSI$ and $\FCob$ can be combined into a category $\FCob_{\Sigma,\I}$ of one-foams with defects, where now edges of a one-foam carry dots (defects) labelled by elements of $\Sigma$. One also needs to allow two types of inner endpoints, for the unit element $X\in \mcUX$ and its dual and for the initial (and accepting) $\Tau$-automata sets. Projective $\Bool$-module $\mcUX$ comes with a commutative associative multiplication $U\cdot V := U \cap V$, giving rise to a monoidal functor 
\[
 \mcF_{(X)} \ : \ \FCob_{\Sigma,\I}\lra \Bool\dmod. 
\]
We leave the details to an interested reader.


\subsection{Summary table} \label{subsec_table}

\begin{figure}
    \centering
\begin{tikzpicture}[scale=0.6]
\begin{scope}[shift={(0,0)}]

\draw[thick] (0,6) -- (19.75,6);
\draw[thick] (0,5) -- (19.75,5);
\draw[thick] (0,4) -- (19.75,4);
\draw[thick] (0,3) -- (19.75,3);
\draw[thick] (0,2) -- (19.75,2);
\draw[thick] (0,1) -- (19.75,1);
\draw[thick] (0,0) -- (19.75,0);

\draw[thick] (0,0) -- (0,6);

\node at (1.175,4.5) {TQFT};
\node at (1.175,3.5) {$\mcF(+)$};
\node at (1.175,2.5) {$L_{\circ}$};
\node at (1.175,1.5) {$L_{\I}$};
\draw[thick,<-] (1.545,0.5) arc (0:360:0.37);

\draw[thick] (2.35,0) -- (2.35,6);

\node at (5.2,5.5) {Automaton $(Q)$};
\node at (5.2,4.5) {Yes};
\node at (5.2,3.5) {free $\Bool$-module};
\node at (5.2,2.5) {strongly circular};
\node at (5.2,1.5) {regular language};
\node at (5.2,0.5) {evaluates to $1\in\Bool$};

\draw[thick] (8,0) -- (8,6);

\node at (11.1,5.5) {$\Tau$-automaton $(X)$};
\node at (11.1,4.5) {Yes};
\node at (11.1,3.5) {projective $\Bool$-module};
\node at (11.1,2.5) {strongly circular};
\node at (11.1,1.5) {regular language};
\node at (11.1,0.5) {evaluates to $1\in\Bool$};

\draw[thick] (14.25,0) -- (14.25,6);

\node at (17,5.5) {pair $(L_{\I},L_{\circ})$};
\node at (17,4.5) {No, in general};
\node at (17,3.5) {finite $\Bool$-module};
\node at (17,2.5) {circular};
\node at (17,1.5) {regular language};
\node at (17,0.5) {either $0$ or $1\in\Bool$};

\draw[thick] (19.75,0) -- (19.75,6);

\end{scope}
    
\end{tikzpicture}
    \caption{TQFTs associated to automata and $\Tau$-automata (present paper) vs topological theories associated to pairs $(L_\I,L_{\circ})$ of a regular language and a regular circular language~\cite{IK-top-automata}. }
    \label{table-001}
\end{figure}

Table in Figure~\ref{table-001} summarizes  similarities and differences between TQFTs constructed in the present paper and associated to automata and $\Tau$-automata and topological theories associated to a pair of regular language (with a circular second language) in~\cite{IK-top-automata}. Topological theory associated to $(L_\I,L_{\circ})$ is not a TQFT, in general, and some pairs $(L_\I,L_{\circ})$ cannot be realized via any TQFT (for instance if $\emptyset\notin L_{\circ}$ and language $L_\I$ is nonempty). More generally, $(L_\I,L_{\circ})$  is not realizable in any TQFT if $L_{\circ}$ is not strongly circular.  For all three columns, the state space $\mcF(+)$ is  a finite $\Bool$-module; it is additionally free or projective for the first or second column, respectively.  

%
%




\bibliographystyle{amsalpha} 

\bibliography{top-automata}

\end{document}